\title{Weighted Composition operators on de Branges--Rovnyak spaces}
\author[Fricain]{Emmanuel Fricain}
 \address{Univ. Lille, CNRS, UMR 8524 - Laboratoire Paul Painlevé, F-59000 Lille, France}
 \email{emmanuel.fricain@univ-lille.fr}
\author[Karaki]{Muath Karaki}
\address{Department of Mathematics, An--Najah National University, Nablus, Palestine}
\email{muath.karaki@najah.edu}
\author[Mashreghi]{Javad Mashreghi}
\address{D\'epartement de math\'ematiques et de statistique,
         Universit\'e Laval,
         Qu\'ebec, QC,
         Canada G1K 7P4}
\email{javad.mashreghi@mat.ulaval.ca}
\author[Ostermann]{Ma\"eva Ostermann}
\address{Univ. Lille, CNRS, UMR 8524 - Laboratoire Paul Painlevé, F-59000 Lille, France}
\email{maeva.ostermann@univ-lille.fr}
\keywords{Composition operators, weighted composition operators, de Branges--Rovnyak spaces, multipliers, boundary behavior, compactness}
\subjclass[2020]{30H45,47B32,47B33,46E22}
\theoremstyle{plain}
\newcommand{\customlabel}[2]{\def\@currentlabel{#2}\label{#1}}
\newtheorem{theorem}{Theorem}[section]
\newtheorem*{theorem*}{Theorem}
\newtheorem{lemma}[theorem]{Lemma}
\newtheorem{corollary}[theorem]{Corollary}
\newtheorem{proposition}[theorem]{Proposition}
\newtheorem{question}[theorem]{Question}
\theoremstyle{definition}
\newtheorem{remark}[theorem]{Remark}
\newtheorem{example}[theorem]{Example}
\let\c@equation\c@theorem
\renewcommand{\phi}{\varphi}
\def\HH{\mathscr{H}}
\def\P{\mathscr{P}}
\def\W{W_{u,\phi}}
 \DeclareMathOperator*{\supess}{sup\,ess}
\begin{document}
\begin{abstract}
In this paper, we characterize the boundedness and the compactness of weighted composition operators acting on a de Branges--Rovnyak space $\mathcal{H}(b)$, where the symbol $b$ is a rational function in the unit ball of $H^\infty$ that is not a finite Blaschke product. Our results extend those of \cite{alhajj2022composition} by exploiting a close relationship between weighted composition operators on $\mathcal{H}(b)$ and their counterparts on the Hardy space $H^2$.
\end{abstract}

\maketitle

\section{Introduction}
An important example of operators that has been extensively studied on Hilbert spaces of analytic functions on the open unit disk $\mathbb{D}$ of the complex plane is the class of composition operators
\[
C_\varphi:f\longmapsto f\circ\varphi,
\]
where $\varphi$ is an analytic self-map of $\mathbb{D}$, called the symbol of $C_\varphi$. In particular, thanks to Littlewood’s subordination principle, it is well known that $C_\varphi$ is always bounded on the Hardy space $H^2$ of the open unit disc. A natural extension of this class is the family of weighted composition operators
\[
W_{u,\varphi}:f\longmapsto u\cdot f\circ \varphi,
\]
where $u$ is some analytic function, called the weight of $W_{u,\varphi}$. In other words, a weighted composition operator is the combination of a composition operator and a multiplication operator.

Weighted composition operators are widely studied in spaces such as $H^2$, the Bergman space $A^2$, and various spaces of analytic or harmonic functions. They exhibit richer behavior than ordinary composition operators, as the presence of a weight can significantly influence boundedness, compactness, spectral properties, and dynamical features of the mapping. Moreover, the weighted composition operators appear naturally in the study of linear isometries. See for instance \cite{MR4762036}. A characterization of boundedness of weighted composition operators on $H^2$ has been obtained in terms of Carleson measures \cites{CONTRERAS2001224,Gallardo-Gutierrez-2010}. Notably, it is possible to find examples where the weight $u$ is unbounded (so that the corresponding multiplication operator is not bounded on $H^2$), yet the choice of the symbol $\varphi$ ensures that the weighted composition operator $W_{u,\varphi}$ remains bounded on $H^2$.

In this paper, we are studying the weighted composition operators on a particular class of de Branges--Rovnyak spaces $\mathcal H(b)$. More precisely, we focus on the case when $b$ is a rational (but, not finite Blaschke product) function in the unit ball of $H^\infty$, with $\|b\|_\infty=1$. It is known that $b$ has a Pythogorean mate $\tilde a$, that is $\tilde a$ is the unique outer function such that $|\tilde a|+|b|=1$ a.e. on $\mathbb T=\partial\mathbb D$ and $\tilde a(0)>0$. Indeed, in this case the Pythogorean mate can be obtained by the Fej\'er--Riesz theorem. Hence, in particular, it is also a rational function without zeros in $\mathbb D$ and without poles in $\overline{\mathbb D}$. Since $\|b\|_\infty=1$, the function $\tilde a$ has some zeros $\zeta_1,\zeta_2,\dots,\zeta_n$ on $\mathbb T$ with respective multiplicities $m_1,\dots,m_n$. Moreover, it is known \cites{MR3110499,MR3503356} that the space $\mathcal H(b)$ can be decomposed as
\begin{equation}\label{eq:decomposition-Hb}
    \HH(b)=\prod_{i=1}^{n}(z-\zeta_i)^{m_i}H^2\oplus \P_{N-1},
\end{equation} where $N=\sum_{i=1}^nm_i$ and $\P_{N-1}$ is the set of polynomials of degree less or equal to $N-1$. Note that the sum in \eqref{eq:decomposition-Hb} is a direct sum. It turns out that de Branges--Rovnyak spaces, in this particular case, coincide with range spaces of co-analytic Toeplitz operators. More precisely, if we denote by $a=\prod_{i=1}^na_i,\ a_i(z)=(z-\zeta_i)^{m_i},$  it is known that 
\begin{equation}\label{eq:Hb-coincide-avecMabar}
\HH(b)=\mathcal{M}(\overline{\tilde a})=\mathcal{M}(\overline{a}),
\end{equation}
where $\mathcal{M}(v)$ denotes the range of the Toeplitz operator $T_v$ with the symbol $v\in L^\infty(\mathbb T)$, equipped with the range norm, making $T_v$ a surjective partial isometry from $H^2$ onto $\mathcal M(v)$. At the same time, \eqref{eq:decomposition-Hb} can be rewritten as $\mathcal H(b)=\mathcal M(a)\oplus \P_{N-1}$. We will equip the space $\HH(b)$ with the following norm 
\begin{equation}\label{eq:norm-Hb-decomposition}
\|f\|_b^2:=\|g\|^2_2+\|p\|^2_2,\qquad f=ag+p, \, (g,p)\in H^2\times \P_{N-1}.
\end{equation}
This norm is not the usual norm in the de Branges--Rovnyak spaces, but it is an equivalent norm in this particular case. 

An important property of functions $f$ in $\HH(b)$, which easily follows from \eqref{eq:decomposition-Hb}, is that, for every $1\leq i\leq n$, the function $f$ and its derivatives up to order $m_i-1$ have non-tangential limits at points $\zeta_i$. Furthermore, if $f=ag+p$, with $g\in H^2$ and $p\in\P_{N-1}$, then the polynomial $p$ satisfies the following interpolation problem:
\begin{equation}\label{eq:partie-polynomial-interpolation}
p^{(k)}(\zeta_i)=f^{(k)}(\zeta_i), \qquad\text{for every }1\leq i\leq n,\,0\leq k\leq m_i-1.
\end{equation}
In particular, 
\begin{equation}\label{eq:description-Ma-avec-derivee}
\mathcal M(a)=\left\{f\in\HH(b):f^{(k)}(\zeta_i)=0\text{ for every }1\leq i\leq n,\,0\leq k\leq m_i-1\right\}.
\end{equation}
We can also easily see from \eqref{eq:decomposition-Hb} that $\HH(b)$ contains the set of polynomials. It can be proved that this set is dense in $\HH(b)$. See \cite{Fricain-2015-vol2} for a detailed account on $\HH(b)$ spaces and more precisely Section 27.5 therein for the particular case of rational $b$'s.

In \cite{alhajj2022composition}, the authors completely characterized  the symbols $\varphi$ such that the associated composition operator $C_\varphi$ is bounded/compact from $\mathcal H(b)$ into itself,  when $b$ is a rational function (not a finite Blaschke product) in the unit ball of $H^\infty$. The main point in the results obtained in \cite{alhajj2022composition} is that, using \eqref{eq:decomposition-Hb}, a genuine link can be made between the properties of $C_\varphi$ on $\mathcal H(b)$ and the properties of a related weighted composition operator $W_{u,\varphi}$ on $H^2$ for some appropriate weight $u$. On the other hand, it should also be mentioned that in \cite{MR3411049}, the authors presented a complete characterization for a composition operator $C_\varphi:\mathcal H(\Theta)\longrightarrow H^2$ to be compact when $\Theta$ is an inner function. Note that in this case, the space $\mathcal H(\Theta)$ coincides with the so-called model space $K_\Theta=(\Theta H^2)^\perp$, and it is of finite dimension if and only if $\Theta$ is a finite Blaschke product. In \cite{MR3438324}, the membership to Schatten classes for $C_\varphi:\mathcal H(\Theta)\longrightarrow H^2$ is studied, in particular, when the inner function $\Theta$ is a one-component inner function. Some generalizations of \cite{MR3411049} have been obtained in \cite{MR3915413}, where the authors study the compactness of $C_\varphi:\mathcal H(b)\longrightarrow H^2$ for some arbitrary function $b$ in the closed unit ball of $H^\infty$. Since $\mathcal H(b)$ is contractively contained in $H^2$, Littlewood's subordination principle makes it obvious that  the operator $C_\varphi:\mathcal H(b)\longrightarrow H^2$ is always bounded. The situation of \cite{alhajj2022composition} is different because it is unclear that when a composition operator $C_\varphi$ maps $\mathcal H(b)$ to itself; there are somehow some algebraic restrictions.

The goal of this paper is to pursue the line of research of \cite{alhajj2022composition} and study when a weighted composition operator $\W$ is bounded/compact from $\HH(b)$ into itself. Let us first remark that the operator $\W$ is bounded from $\HH(b)$ to itself if and only if $u\cdot f\circ \phi\in\HH(b)$ for all $f\in\HH(b)$. This is a simple consequence of the closed graph theorem and the fact that convergence in $\mathcal H(b)$ implies pointwise convergence. Moreover, if $\W$ is bounded on $\HH(b)$ then $u$ and $u\phi$ belong to $\HH(b).$ Indeed, since $1$ and $z\in\HH(b)$, we should have $u=u\cdot1\circ \phi\in\HH(b)$ and $ u\phi=u\cdot z\circ \phi\in\HH(b)$. 

So, without loss of generality, \textit{we assume that}
\begin{enumerate}[(H1)]
\item\label{H1} $u\in\HH(b)$, and
\item\label{H2} $u\phi\in\HH(b)$.
\end{enumerate}
In particular, for every $1\leq i\leq n,$ the function $u$ and its derivatives up to order $m_i-1$ have non-tangential limits at $\zeta_i$. In \Cref{Lemma:phi-in-M-ai-li}, we will prove the existence of non-tangential limits of $\phi$  at the points $\zeta_i$, when one of the derivatives of $u$ of order less or equal to $m_i-1$ does not vanish. Up to rearranging the sequence $\zeta_i$, $1\leq i\leq n$, we may assume that 
$\{\zeta_i:\ \exists \, 0\leq \ell\leq m_i-1,  u^{(\ell)}(\zeta_i)\ne 0 \text{ and } \phi(\zeta_i)\in\mathbb{D}  \}=\{\zeta_{p+1},\cdots,\zeta_n\}$. We will see that the continuity of $W_{u,\phi}$ on $\HH(b)$ is equivalent to the continuity of $W_{w,\phi}$ on $H^2$, with the following weight 
\begin{equation}
    \label{eq:defining-w}
w=\frac{u\cdot a\circ\phi\cdot\prod_{j=p+1}^n\left(\phi-\phi(\zeta_j)\right)^{m_j}}{a},
\end{equation}
which is well defined as soon as \ref{H1} and \ref{H2} are satisfied. 
\par\smallskip

The main result of this paper is the following.
\begin{theorem}\label{thm10}
Let $u$ and $\varphi$ satisfy \ref{H1} and \ref{H2}.  Let $w$ defined by \eqref{eq:defining-w}. Then the following are equivalent:
    \begin{enumerate}[(i)]
        \item The operator $\W$ is bounded on $\HH(b)$;
        \item The operator  $W_{w,\phi}$ is bounded on $H^2$;
        \item  We have \[\sup_{\lambda\in \mathbb{D}} \int_{\mathbb{T}}(1-|\lambda|^2) \frac{|w(\zeta)|^2}{|1-\overline{\lambda}\phi(\zeta)|^2}dm(\zeta)<+\infty.\]
    \end{enumerate}
\end{theorem}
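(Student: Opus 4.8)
The plan is to establish the chain (i) $\Leftrightarrow$ (ii) $\Leftrightarrow$ (iii), where the equivalence (ii) $\Leftrightarrow$ (iii) is the known Carleson-measure characterization of bounded weighted composition operators on $H^2$ (see \cites{CONTRERAS2001224,Gallardo-Gutierrez-2010}): $W_{w,\phi}$ is bounded on $H^2$ if and only if the measure $\mu_{w,\phi}(E)=\int_{\phi^{-1}(E)}|w|^2\,dm$ is a Carleson measure, and testing against the normalized reproducing kernels $k_\lambda(z)=(1-|\lambda|^2)^{1/2}/(1-\overline\lambda z)$ of $H^2$ turns this into exactly the supremum in (iii). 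So the real work is the equivalence (i) $\Leftrightarrow$ (ii), and the heart of it is to exhibit a concrete bounded invertible (or at least bounded-below and bounded-above) intertwining relation between $\W$ acting on $\HH(b)$ and $W_{w,\phi}$ acting on $H^2$.

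First I would use the decomposition $\HH(b)=\mathcal M(a)\oplus\P_{N-1}$ from \eqref{eq:decomposition-Hb} together with \eqref{eq:norm-Hb-decomposition}, so that studying $\W$ on $\HH(b)$ splits into understanding its action on the "big" piece $aH^2$ and on the finite-dimensional piece $\P_{N-1}$. The natural map to consider is $T_a\colon H^2\to\mathcal M(a)$, $g\mapsto ag$, which is (by the very definition of the range norm, cf. \eqref{eq:Hb-coincide-avecMabar}) an isometry onto $\mathcal M(a)$. The key computation is then to transfer $\W$ through this map: for $g\in H^2$ one has $\W(ag)=u\cdot(a\circ\phi)\cdot(g\circ\phi)$, and the function $w$ in \eqref{eq:defining-w} is precisely designed so that $u\cdot(a\circ\phi)=\dfrac{w\cdot a}{\prod_{j=p+1}^n(\phi-\phi(\zeta_j))^{m_j}}$; hence $\W(ag)=a\cdot\dfrac{w\cdot(g\circ\phi)}{\prod_{j=p+1}^n(\phi-\phi(\zeta_j))^{m_j}}$. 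The denominator factor is an invertible multiplier on $\mathcal M(a)$-side arguments because each $\phi(\zeta_j)\in\mathbb D$ (by the rearrangement convention preceding the theorem), so $\phi-\phi(\zeta_j)$ is bounded below on $\mathbb T$ away from where $\phi$ is unimodular; more carefully, $1/\prod_j(\phi-\phi(\zeta_j))^{m_j}$ is bounded on $\mathbb T$ since $|\phi|\le 1$ and $|\phi(\zeta_j)|<1$. Modulo this bounded, boundedly invertible twist, $\W$ on the $\mathcal M(a)$ component is unitarily equivalent (through $T_a$) to $W_{w,\phi}$ on $H^2$, up to the finite-rank correction coming from the projection back onto $\HH(b)=\mathcal M(a)\oplus\P_{N-1}$ — one must check that $u\cdot f\circ\phi$, after subtracting off its Taylor data at the $\zeta_i$'s (which lands in $\P_{N-1}$ via the interpolation problem \eqref{eq:partie-polynomial-interpolation}), differs from an element of $aH^2$ by a polynomial.

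In more detail, the argument for (i) $\Rightarrow$ (ii): assume $\W$ is bounded on $\HH(b)$. For $g\in H^2$, $ag\in\mathcal M(a)\subset\HH(b)$ with $\|ag\|_b=\|g\|_2$, so $\W(ag)\in\HH(b)$ with norm $\lesssim\|g\|_2$. Writing $\W(ag)=a g_1+p_1$ with $g_1\in H^2$, $p_1\in\P_{N-1}$ and $\|g_1\|_2^2+\|p_1\|_2^2=\|\W(ag)\|_b^2\lesssim\|g\|_2^2$, I would identify $g_1$ with (essentially) $W_{w,\phi}g$ divided by the bounded factor $\prod_j(\phi-\phi(\zeta_j))^{m_j}$ — noting that the polynomial part $p_1$ is forced by \eqref{eq:partie-polynomial-interpolation} and is controlled by finitely many point evaluations of $u\cdot g\circ\phi$ and its derivatives at the $\zeta_i$, hence by $\|g\|_2$ via the fact that those points are either in $\mathbb D$ (interior, trivial estimate) or are points where $u$ and enough of its derivatives vanish (by the very definition of $\{\zeta_{p+1},\dots,\zeta_n\}$ and the choice of $w$, which absorbs the vanishing). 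This gives boundedness of $W_{w,\phi}$ on $H^2$. The converse (ii) $\Rightarrow$ (i) runs the same correspondence backwards: given $f=ag+p\in\HH(b)$, compute $\W f=u\cdot(ag)\circ\phi+u\cdot p\circ\phi$; the first term is handled by the $W_{w,\phi}$ bound as above, and the second, $u\cdot p\circ\phi$ with $p\in\P_{N-1}$, lies in a fixed finite-dimensional space of functions (spanned by $u\phi^k$, $0\le k\le N-1$), each of which is in $\HH(b)$ by \ref{H1}, \ref{H2} and the algebra/derivative structure — so that piece contributes only a bounded finite-rank operator.

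The main obstacle I anticipate is the bookkeeping at the boundary points $\zeta_i$ where $\phi$ has a unimodular value (or where $\phi$'s non-tangential limit fails to exist): one must verify that the factor $w/a$ appearing implicitly, and the passage between $\|\cdot\|_b$ and $\|\cdot\|_2$, genuinely preserves membership in $H^2$ and does not introduce spurious poles — this is exactly why the product $\prod_{j=p+1}^n(\phi-\phi(\zeta_j))^{m_j}$ is inserted in \eqref{eq:defining-w} rather than the full $\prod_{j=1}^n$, and getting the multiplicities and the vanishing orders of $u$ to match up at each $\zeta_i$ (using \Cref{Lemma:phi-in-M-ai-li} for the existence of $\phi(\zeta_i)$ when some derivative of $u$ of order $\le m_i-1$ is nonzero) will require care. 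Once that matching is done, the norm equivalence \eqref{eq:norm-Hb-decomposition} together with the isometry $T_a$ makes the quantitative two-sided estimate between $\|\W\|_{\HH(b)\to\HH(b)}$ and $\|W_{w,\phi}\|_{H^2\to H^2}$ (plus a finite-rank term) routine.
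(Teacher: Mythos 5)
There is a genuine gap, and it sits at the exact point your argument leans on: the claim that $1/\prod_{j=p+1}^n(\phi-\phi(\zeta_j))^{m_j}$ is bounded on $\mathbb T$ ``since $|\phi|\le 1$ and $|\phi(\zeta_j)|<1$'' is false. Those hypotheses give a \emph{lower} bound for $|1-\overline{\phi(\zeta_j)}\phi|\ge 1-|\phi(\zeta_j)|$, not for $|\phi-\phi(\zeta_j)|$, which can vanish or be arbitrarily small on $\mathbb T$ (nothing prevents $\phi$ from taking the value $\lambda_j=\phi(\zeta_j)\in\mathbb D$ on the boundary). So the ``bounded, boundedly invertible twist'' that is supposed to make $\W\restriction \mathcal M(a)$ equivalent to $W_{w,\phi}$ does not exist, and both directions of (i) $\Leftrightarrow$ (ii) as you sketch them collapse. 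The paper's way around this is to split $H^2=BH^2\oplus K_B$ with $B=\prod_{j=p+1}^n\left(\frac{z-\lambda_j}{1-\overline{\lambda_j}z}\right)^{m_j}$: for $g\in BH^2$ the offending factor $\prod(\phi-\lambda_j)^{m_j}$ appears in the \emph{numerator} of $g\circ\phi$ and cancels, the only division being by the polynomial $\prod(1-\overline{\lambda_j}z)^{m_j}$, which is genuinely invertible in $H^\infty$; the complementary finite-dimensional piece $aK_B$ is not ``routine'' but needs Corollary~\ref{cor:7} (functions $u\cdot p\circ\phi/\prod(1-\overline{\lambda_j}\phi)^{m_j}$ lie in $\HH(b)$), which in turn rests on Lemmas~\ref{Lemma:phi-in-M-ai-li} and~\ref{Lemma4}.

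The second missing piece is that for (i) $\Rightarrow$ (ii) you must first prove $w\in H^2$; this is not bookkeeping but the hardest step of the whole proof (Lemma~\ref{lemma9}), and it requires the necessary condition \ref{H3} of Theorem~\ref{lemma8}, proved via an interpolating-sequence construction at the points $\zeta_i$ where $\phi$ has unimodular limit. Once $w\in H^2$ is known, the paper's argument for (i) $\Rightarrow$ (ii) is cleaner than an attempt to ``identify $g_1$'': apply $\W$ to $a\cdot\prod(z-\lambda_j)^{m_j}f$, write the image as $ag+p$ with $p\in\P_{N-1}$, divide by $a$, and use $w\cdot f\circ\phi\in H^1$ to force $p=0$, so that $w\cdot f\circ\phi=g\in H^2$. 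Your proposal correctly identifies the overall architecture (transfer through multiplication by $a$, treat $\P_{N-1}$ separately via Theorem~\ref{thm5}, cite the Carleson-measure equivalence for (ii) $\Leftrightarrow$ (iii)), but without the $BH^2\oplus K_B$ splitting and without Lemma~\ref{lemma9} the proof does not go through.
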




Note that, given any symbol $\phi:\mathbb D\to\mathbb D$ analytic, there is always a weight $u$ such that the operator $W_{u,\phi}$ is bounded on $\HH(b)$. For example, it is enough to take $u = ag$ with $g \in H^\infty$. Indeed, by the Littlewood subordination principle, for every function $f \in \HH(b)$, the function $a\cdot g\cdot f \circ \phi$ belongs to $aH^2 \subset \HH(b)$, which proves, by the closed graph theorem, the boundedness of $W_{u,\phi}$ on $\HH(b)$. This can also be seen using \Cref{thm10}, since in this case, the functions $u$ and $u\varphi$ belong to $\mathcal M(a)$, hence to $\HH(b)$ and the associated weight $w$ belongs  to $H^\infty$. As we noted as part of the general philosophy, this shows that the presence of a weight improves the properties of the associated operator $W_{u,\phi}$.
\par\medskip

This paper is organized as follows. In \Cref{section:decomposition}, we will give an interesting split 
of $\mathcal M(\overline{a})$ when $a$ can be factorized as the 
product of finitely many $a_j$, $1\leq j\leq n$, which satisfy a 
corona condition. In \Cref{section:multiplicative}, we will use this result to prove 
that under conditions \ref{H1} and \ref{H2}, the function 
$u\cdot p(\phi)$ belongs to $\HH(b)$ for every polynomial $p$.  In Section~\ref{section:limites}, we will also show that the boundedness of $W_{u,\phi}$ on $\HH(b)$ may impose some restrictions on the non-tangential limits of $\varphi$ at points $\zeta_i$ that we will denote by \ref{H3}. In this section, we will also see that if $u$ and $\varphi$ satisfy \ref{H1} and \ref{H2} and if $w\in H^2$, then we necessarily have \ref{H3}.  \Cref{section:proof} is devoted to the proof of \Cref{thm10}, while, in \Cref{section:csq}, we give some examples and some simpler sufficient conditions for the boundedness of $W_{u,\varphi}$ on $\HH(b)$. We also examine the particular case where the symbol $\varphi$ has non-unimodular non-tangential limits at every point $\zeta_i$, as well as some interesting connections with the angular Carath\'eodory limits of $\varphi$. Section 7 is devoted to the study of admissible weights, that is, the weights $u$ that give rise to  bounded operators $\W$ on $\HH(b)$ for a fixed symbol $\phi$. Finally, the last section is devoted to the study of the compactness of the operator $W_{u,\varphi}$ on $\HH(b)$. 

\section{Decomposition of $\mathcal M(\overline{a})$ spaces}\label{section:decomposition}
Recall that if $\varphi,\psi\in H^\infty$, we have 
\begin{equation}\label{eq:relation-commutation-Toeplitz}
T_{\overline{\phi}}T_{\overline{\psi}}=T_{\overline{\psi}}T_{\overline{\phi}}=T_{\overline{\phi\psi}},
\end{equation}
which immediately gives 
\begin{equation}\label{eq11ZEE:inclusionMabar}
\mathcal M(\overline{\phi\psi})\subset \mathcal M(\overline{\phi})\cap \mathcal M(\overline{\psi}).
\end{equation}
Moreover, if $\varphi_{\text{i}}$ is an inner function, then $T_{\overline{\varphi_{\text{i}}}}H^2=H^2$, and then \eqref{eq:relation-commutation-Toeplitz} implies that $\mathcal M(\overline{\varphi})=\mathcal M(\overline{\varphi_{\text{o}}})$, where $\varphi_{\text{o}}$ is the outer part of $\varphi$. In other words, when one studies the properties of $\mathcal M(\overline{a})$, we may assume that $a$ is an outer function in $H^\infty$. Then the operator $T_{\overline{a}}$ is one-to-one, and hence an isometry from $H^2$ onto $\mathcal M(\overline{a})=T_{\overline{a}}H^2$  equipped with the range norm
\begin{equation}\label{eq:normMabar}
\|f\|_{\mathcal M(\overline{a})}=\|g\|_2,\qquad f=T_{\overline{a}}(g)\in\mathcal M(\overline{a}).    
\end{equation}
It follows easily from \eqref{eq:normMabar} that $\mathcal M(\overline{a})$ is boundedly contained in $H^2$, and thus is a reproducing kernel Hilbert space. See \cite{Fricain-2015-vol2}*{Section 17.2}.  

A natural question is to know when we have the equality in \eqref{eq11ZEE:inclusionMabar}. The following result, interesting in its own right, will be useful in our study of the boundedness of $W_{u,\phi}$ on $\HH(b)$. 

\begin{theorem}\label{thm-decomposition-spaces-Mabar}
Let $a_1,\dots,a_n\in H^\infty$ be outer functions. Then the following assertions are equivalent.
\begin{enumerate}[(i)]
    \item For every $1\leq i,j\leq n$ with $i\neq j$, we have $\inf_{\mathbb D}(|a_i|+|a_j|)>0$.
    \item $\mathcal M(\overline{a_1\cdots a_n})=\mathcal M(\overline{a_1})\cap\dots\cap \mathcal M(\overline{a_n})$.
\end{enumerate}
\end{theorem}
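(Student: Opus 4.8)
The implication $(ii)\Rightarrow(i)$ should be the easier direction, argued by contradiction: if $\inf_{\mathbb D}(|a_i|+|a_j|)=0$ for some $i\neq j$, I would produce a function lying in $\mathcal M(\overline{a_i})\cap\mathcal M(\overline{a_j})$ but not in $\mathcal M(\overline{a_1\cdots a_n})$. A clean way is to use the known description of $\mathcal M(\overline a)$ via the Toeplitz-operator norm together with the fact that membership $f\in\mathcal M(\overline a)$ is equivalent to $f/ a\in H^2$ after clearing the inner factor — more precisely, since each $a_k$ is outer, $f\in\mathcal M(\overline{a_k})$ forces $f=T_{\overline{a_k}}g_k$ with $g_k\in H^2$, and from $T_{\overline{a_k}}g_k = P_+(\overline{a_k}g_k)$ one deduces via a duality/reproducing-kernel computation that $\|f\|_{\mathcal M(\overline{a_k})}^2 = \sup\{|\langle f,h\rangle|^2 : \|a_k h\|_2\le 1\}$. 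The failure of the corona-type condition lets me build, using Carleson-type test functions concentrated near a point where both $|a_i|$ and $|a_j|$ are small, an $f$ whose two separate norms stay bounded while the norm associated to $\overline{a_1\cdots a_n}$ blows up; alternatively, and perhaps more transparently, invoke that when the condition fails the operator $T_{\overline{a_i/a_j}}$ (or a symmetric variant) is unbounded, which obstructs the identity of the range norms.

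For the main direction $(i)\Rightarrow(ii)$, the inclusion $\subseteq$ is already \eqref{eq11ZEE:inclusionMabar}, so the content is the reverse inclusion $\mathcal M(\overline{a_1})\cap\cdots\cap\mathcal M(\overline{a_n})\subseteq \mathcal M(\overline{a_1\cdots a_n})$. By induction it suffices to treat $n=2$: assuming $\inf_{\mathbb D}(|a_1|+|a_2|)>0$, show $\mathcal M(\overline{a_1})\cap\mathcal M(\overline{a_2})\subseteq \mathcal M(\overline{a_1a_2})$. Take $f$ in the intersection. I want to exhibit $g\in H^2$ with $f=T_{\overline{a_1a_2}}g$. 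The key structural input is a Bezout/corona identity: the hypothesis $\inf_{\mathbb D}(|a_1|+|a_2|)>0$ for functions $a_1,a_2$ that are outer and in $H^\infty$ (and, in the application, rational with no zeros in $\overline{\mathbb D}$) gives $b_1,b_2\in H^\infty$ with $a_1b_1+a_2b_2\equiv 1$ on $\mathbb D$. Then for $f=T_{\overline{a_1}}g_1=T_{\overline{a_2}}g_2$ I can write $f = T_{\overline{a_1b_1}}f + T_{\overline{a_2b_2}}f$ formally, and using the commutation relation \eqref{eq:relation-commutation-Toeplitz} together with $T_{\overline{a_1}}g_1=f$ I would like to rearrange this as $f = T_{\overline{a_1a_2}}h$ for a suitable $h\in H^2$ built from $b_1 g_2$ and $b_2 g_1$. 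Concretely, one expects $h = T_{\overline{b_2}}g_1 + T_{\overline{b_1}}g_2$ (up to the precise placement of bars and projections), and then one checks $T_{\overline{a_1 a_2}}h = T_{\overline{a_2}}T_{\overline{b_2}}T_{\overline{a_1}}g_1 + T_{\overline{a_1}}T_{\overline{b_1}}T_{\overline{a_2}}g_2 = T_{\overline{a_2}}T_{\overline{b_2}}f + T_{\overline{a_1}}T_{\overline{b_1}}f = T_{\overline{a_1 b_1 + a_2 b_2}}f = T_{\overline 1}f = f$, where the middle steps use \eqref{eq:relation-commutation-Toeplitz} to move analytic symbols freely past one another.

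The main obstacle is making that formal manipulation rigorous, because $T_{\overline{\psi}}$ only commutes cleanly with another co-analytic Toeplitz operator when the symbols are genuinely in $H^\infty$ — I must be careful that $b_1,b_2\in H^\infty$ (guaranteed by the corona theorem, and in the rational case simply by partial fractions since $a_1,a_2$ have no common zero in $\overline{\mathbb D}$), and that all the intermediate functions $T_{\overline{b_j}}g_i$ genuinely lie in $H^2$, which they do since $b_j\in H^\infty$ and $g_i\in H^2$. A second point needing care is that $\overline{a_1 b_1}+\overline{a_2 b_2}=\overline{a_1b_1+a_2b_2}=\overline 1$ as $L^\infty$ symbols, so $T_{\overline{a_1b_1}}+T_{\overline{a_2b_2}}=T_{\overline 1}=I$ by linearity of $\psi\mapsto T_\psi$; this is where the Bezout identity does its work. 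I would then close the induction: given the result for $n-1$, apply $n=2$ to the pair $(a_1\cdots a_{n-1}, a_n)$, after checking that $\inf_{\mathbb D}(|a_1\cdots a_{n-1}|+|a_n|)>0$ follows from the pairwise hypotheses — this last verification is elementary for rational outer functions (look at the finitely many boundary zeros of each $a_j$ and use that no zero of $a_n$ is a zero of any $a_i$, $i<n$), though in the general $H^\infty$ setting it requires a short argument that the product of finitely many of the $a_i$ still satisfies a corona condition with the remaining one.
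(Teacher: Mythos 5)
Your forward direction $(i)\Rightarrow(ii)$ is essentially the paper's argument: the corona identity $a_1b_1+a_2b_2=1$, the candidate preimage $T_{\overline{b_2}}g_1+T_{\overline{b_1}}g_2$, and the Toeplitz computation $T_{\overline{a_1a_2}}\bigl(T_{\overline{b_2}}g_1+T_{\overline{b_1}}g_2\bigr)=T_{\overline{a_1b_1+a_2b_2}}f=f$ all check out, as does the reduction to $n=2$ by induction. The one step you flag but do not supply --- that the pairwise condition forces $\inf_{\mathbb D}(|a_1\cdots a_{n-1}|+|a_n|)>0$ in the general $H^\infty$ setting --- is genuinely needed and is short: after normalizing $\|a_j\|_\infty\le1$, multiply the inequalities $|a_j|+|a_n|\ge c$ over $j<n$ and note that the expanded product is at most $2^{\,n-1}\bigl(|a_1\cdots a_{n-1}|+|a_n|\bigr)$. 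So that direction is fine modulo this small omission.

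The converse $(ii)\Rightarrow(i)$ is where the real gap lies. You propose to negate $(i)$ and exhibit a function whose norms in $\mathcal M(\overline{a_i})$ and $\mathcal M(\overline{a_j})$ stay bounded while its $\mathcal M(\overline{a_1\cdots a_n})$-norm blows up, but (a) a discrepancy of norms along a sequence does not by itself produce a function lying in the intersection and outside $\mathcal M(\overline{a_1\cdots a_n})$ --- set equality is a priori compatible with inequivalent norms unless you add a completeness/open-mapping argument, which your sketch never invokes; (b) for $n\ge3$ your candidate must belong to \emph{all} of $\mathcal M(\overline{a_1}),\dots,\mathcal M(\overline{a_n})$, not merely the two spaces indexed by $i$ and $j$, and you do not address this; and (c) the ``Carleson-type test functions'' are never actually constructed (and the alternative via unboundedness of $T_{\overline{a_i/a_j}}$ is not developed). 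The paper closes all three holes at once: assuming $(ii)$, the norm $\bigl(\|f\|_{\mathcal M(\overline{a_1})}^2+\|f\|_{\mathcal M(\overline{a_2})}^2\bigr)^{1/2}$ is complete on $\mathcal M(\overline{a_1a_2})$ (the two limits coincide by pointwise evaluation in these reproducing kernel spaces) and is dominated by $\sqrt2\,\|f\|_{\mathcal M(\overline{a_1a_2})}$, so the Banach isomorphism theorem yields a reverse bound $\|f\|_{\mathcal M(\overline{a_1a_2})}\le c\bigl(\|f\|_{\mathcal M(\overline{a_1})}^2+\|f\|_{\mathcal M(\overline{a_2})}^2\bigr)^{1/2}$; testing on the kernels $k_z$, for which $T_{\overline u}k_z=\overline{u(z)}k_z$ gives $\|k_z\|_{\mathcal M(\overline u)}=\|k_z\|_2/|u(z)|$, then produces $|a_1(z)|^2+|a_2(z)|^2\ge c^{-2}$ directly; and the case of general $n$ is reduced to the pair $\bigl(a_j,\prod_{k\ne j}a_k\bigr)$, using $|a_k|\ge\bigl|\prod_{l\ne j}a_l\bigr|$. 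Your instinct that kernels concentrated where both $|a_i|$ and $|a_j|$ are small are the right test objects is correct, but as written the converse direction is not a proof.
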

\begin{proof}
We first prove the implication $(i)\implies(ii)$. This implication was obtained by Hartmann and Lamberti in \cite{MR4970563}. For the convenience of the reader, we present the justification given in \cite{MR4970563}*{Proposition 3.7} for the case $n=2$, and then we explain how an induction argument can be used to establish the implication for all $n$.
\par\smallskip
-- \textit{Case $n=2$:} Suppose that $\inf_{\mathbb D}(|a_1|+|a_2|)>0$. By the Corona Theorem, there exist $h_1,h_2\in H^\infty$ such that $a_1h_1+a_2h_2=1$. 

Let $f\in \mathcal M(\overline{a_1})\cap\mathcal M(\overline{a_2})$, and  let $g_1,g_2\in H^2$ satisfy $f=T_{\overline{a_1}}g_1=T_{\overline{a_2}}g_2$. So we have
\begin{align*}
T_{\overline{a_1a_2}}\Big(T_{\overline{h_1}}g_2+T_{\overline{h_2}}g_1\Big)
&=T_{\overline{a_1h_1}}\Big(T_{\overline{a_2}}g_2\Big)+T_{\overline{a_2h_2}}\Big(T_{\overline{a_1}}g_2\Big)\\
&=T_{\overline{a_1h_1}}f+T_{\overline{a_2h_2}}f\\
&=T_{\overline{a_1h_1+a_2h_2}}f=f.
\end{align*}
Thus $f\in \mathcal M(\overline{a_1a_2})$, which proves the inclusion  $\mathcal M(\overline{a_1})\cap\mathcal M(\overline{a_2})\subset \mathcal M(\overline{a_1a_2})$. Since the other inclusion is always satisfied (see \Cref{eq11ZEE:inclusionMabar}), we conclude $\mathcal M(\overline{a_1})\cap\mathcal M(\overline{a_2})=\mathcal M(\overline{a_1a_2})$.
\par\smallskip
-- \textit{Induction:}  Assume now that for an integer $n\ge2$ the implication is satisfied for all $k$-tuples, with $2 \leq k \leq n$.

Let $a_1,\dots,a_{n+1}$ be outer functions satisfying $\|a_j\|_\infty\le 1$, and such that, for every $1\leq i,j\leq n$, $i\neq j$, $|a_i|+|a_j|\ge c$ for some constant $c>0$. Let $a=a_1\cdots a_n$. When we multiply all inequalities $|a_j|+|a_{n+1}|\ge c$, for $1\le j\le n$, and develop them, we obtain the linear combination of $2^{n}$ products of $|a_j|$ and only one without the term $|a_{n+1}|$, which is $|a|$. So we  have
\[2^n(|a|+|a_{n+1}|)\ge |a|+(2^n-1)|a_{n+1}|\ge\prod_{j=1}^n(|a_j|+|a_{n+1}|)\ge c^n.\]
Then $|a|+|a_{n+1}|\ge c'$ with $c'=(c/2)^n>0$. By the hypothesis of induction applied to the pair $(a,a_{n+1})$ and to the $n$-tuple $(a_1,\dots,a_n)$, we then deduce that
\begin{align*}
\mathcal M(\overline{a_1\dots a_{n+1}})=\mathcal M(\overline{a a_{n+1}})&=\mathcal M(\overline{a})\cap \mathcal M(\overline{a_{n+1}})\\
&=\mathcal M(\overline{a_1\dots a_{n}})\cap \mathcal M(\overline{a_{n+1}})=\mathcal M(\overline{a_1})\cap\dots\cap \mathcal M(\overline{a_{n+1}}).
\end{align*}
So, the implication is also true for $(n+1)$-tuples and thus, by induction, this is true for every tuple, and this concludes the proof of the implication $(i)\implies(ii)$.
\par\medskip
We now prove the implication $(ii)\implies(i)$. Regarding the first implication, we begin with the case $n=2$ and then deduce the general result.
\par\smallskip
-- \textit{Case $n=2$:} Suppose  that $\mathcal M(\overline{a_1})\cap\mathcal M(\overline{a_2})=\mathcal M(\overline{a_1a_2})$. We equip $\mathcal M(\overline{a_1a_2})$ with the norm
\[\|f\|=\sqrt{\|f\|_{\mathcal M(\overline{a_1})}^2+\|f\|_{\mathcal M(\overline{a_2})}^2}\,,\qquad f\in \mathcal M(\overline{a_1a_2}).\]
This norm is complete. Indeed, let $(f_n)\subset \mathcal M(\overline{a_1a_2})$ be a Cauchy sequence for $\|\cdot\|$. Then $(f_n)$ is also a Cauchy sequence in $\mathcal M(\overline{a_1})$ and in $\mathcal M(\overline{a_2})$. Since $\mathcal M(\overline{a_1})$ and $\mathcal M(\overline{a_2})$ are complete spaces, there exist $g_1\in \mathcal M(\overline{a_1})$ and $g_2 \in \mathcal M(\overline{a_2})$ such that 
\[\|f_n-g_1\|_{\mathcal M(\overline{a_1})}\longrightarrow 0~\text{and}~\|f_n-g_2\|_{\mathcal M(\overline{a_2})}\longrightarrow 0~\text{when}~n\to\infty.\]
But $\mathcal M(\overline{a_1})$ and $\mathcal M(\overline{a_2})$ are Reproducing Kernel Hilbert Spaces, so this implies the pointwise convergence of $(f_n)$ to $g_1$ and $g_2$ simultaneously. Hence, $f:=g_1=g_2\in \mathcal M(\overline{a_1})\cap\mathcal M(\overline{a_2})=\mathcal M(\overline{a_1a_2})$ and
\[\|f_n-f\|=\sqrt{\|f_n-g_1\|_{\mathcal M(\overline{a_1})}^2+\|f_n-g_2\|_{\mathcal M(\overline{a_2})}^2}\longrightarrow0~\text{when}~n\to\infty.\]
We thus prove that $\mathcal M(\overline{a_1a_2})$ is a Banach space equipped with the norm $\|\cdot\|$.

Now, let $f\in \mathcal M(\overline{a_1a_2})$, and let $g\in H^2$ satisfy $f=T_{\overline{a_1a_2}}g$. According to \eqref{eq:normMabar}, we have $\|f\|_{\mathcal M(\overline{a_1a_2})}=\|g\|_{H^2}$. Moreover, since $\|a_1\|_\infty\le 1$ and $\|a_2\|_\infty\le 1$, the operators $T_{\overline{a_1}}$ and $T_{\overline{a_2}}$ are contractions on $H^2$ and, using \eqref{eq:relation-commutation-Toeplitz} and \eqref{eq:normMabar},  we have
\[\|f\|_{\mathcal{M}(\overline{a_1})}=\|T_{\overline{a_2}}g\|_{H^2}\le \|T_{\overline{a_2}}\|\|g\|_{H^2}\le\|g\|_{H^2}~\text{and}~\|f\|_{\mathcal{M}(\overline{a_2})}\le \|g\|_{H^2}.\]
Therefore, we deduce that 
\[\|f\|=\sqrt{\|f\|_{\mathcal M(\overline{a_1})}^2+\|f\|_{\mathcal M(\overline{a_2})}^2}\le \sqrt2 \|f\|_{\mathcal M(\overline{a_1a_2})}.\]
Since $\mathcal M(\overline{a_1a_2})$ is complete for these two norms, by the Banach isomorphism theorem there exists a constant $c>0$ such that for all $f\in \mathcal M(\overline{a_1a_2})$, 
\begin{equation}\label{Eq1}
    \|f\|_{\mathcal M(\overline{a_1a_2})}\le c\|f\|=c\sqrt{\|f\|_{\mathcal M(\overline{a_1})}^2+\|f\|_{\mathcal M(\overline{a_2})}^2}.
\end{equation}
Recall now that, for all $z\in\mathbb D$ and every function $u\in H^\infty$, we have $T_{\overline u}k_z=\overline{u(z)}k_z$. Hence, if $u$ is outer, we get $\|k_z\|_{\mathcal M(\overline{u})}=\frac{\|k_z\|_{H^2}}{|u(z)|}$. Then applying \eqref{Eq1} to $f=k_z$ gives
\[\frac{\|k_z\|_{H^2}}{|a_1(z)a_2(z)|}\le c\sqrt{\frac{\|k_z\|_{H^2}^2}{|a_1(z)|^2}+\frac{\|k_z\|_{H^2}^2}{|a_2(z)|^2}},\]
and so
\[|a_1(z)|+|a_2(z)|\ge|a_1(z)|^2+|a_2(z)|^2\ge\frac1{c^2}.\]
This means that $a_1$ and $a_2$ satisfy $(i)$.
\par\smallskip
-- \textit{General case}: Suppose that  $\mathcal M(\overline{a_1\dots a_n})=\mathcal M(\overline{a_1})\cap\dots\cap \mathcal M(\overline{a_n})$.  Fix $1\le j\le n$ and let $a_j^\sharp$ be defined by $a_j^\sharp=\prod\limits_{k\neq j} a_k$. On the one hand, according to \eqref{eq11ZEE:inclusionMabar}, we have $\mathcal M\left(\overline{a_ja_j^\sharp}\right)\subset\mathcal M(\overline{a_j})\cap\mathcal M\left(\overline{a_j^\sharp}\right)$. On the other hand, 
 \[\mathcal M(\overline{a_j})\cap\mathcal M\left(\overline{a_j^\sharp}\right)
=\mathcal M(\overline{a_j})\cap\mathcal M\left(\overline{\prod_{k\neq j}a_k}\right)\subset \bigcap_{k=1}^n\mathcal M(\overline{a_k})=\mathcal M(\overline{a_1\dots a_n})=\mathcal M\left(\overline{a_ja_j^\sharp}\right).
\]
Then we deduce $\mathcal M\left(\overline{a_ja_j^\sharp}\right)=\mathcal M(\overline{a_j})\cap\mathcal M\left(\overline{a_j^\sharp}\right)$. But we already proved the implication $(ii)\implies(i)$ for pairs of outer functions, and thus there exists $c>0$ such that $|a_j^\sharp|+|a_j|\ge c$. Since for every $k\neq j$, we have $|a_k|\ge|a_j^\sharp|$, we deduce that $|a_k|+|a_j|\ge c$ and so $(i)$ is satisfied.
\end{proof}

\section{Multiplicative properties of $\HH(b)$ functions} \label{section:multiplicative}
We assume from now on that $b$ is a rational (not inner) function in the closed unit ball of $H^\infty$ with $\|b\|_\infty=1$, and the zeros of its pythagorean mate on $\mathbb T$ are $\zeta_1,\dots,\zeta_n$ with respective multiplicities $m_1,\dots,m_n$. Hence $\HH(b)$ is decomposed as in \eqref{eq:decomposition-Hb}. In this case, its set of multipliers $\mathfrak{M}(\HH(b))=\{\varphi\in \mbox{Hol}(\mathbb D):\varphi f\in\HH(b),\,\forall f\in\HH(b)\}$ is described as
\begin{equation}\label{eq:multiplier3434}
\mathfrak{M}(\HH(b))=\HH(b)\cap H^\infty.
\end{equation}
See \cite{MR3967886}*{Proposition 3.1}. 
 \begin{theorem}\label{thm5}
     Let $u$ and $\varphi$ satisfy \ref{H1} and \ref{H2}. Then, for every polynomial $p$, the function $u\cdot p\circ \phi$ belongs to $\HH(b)$.
 \end{theorem}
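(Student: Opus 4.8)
The goal is to show $u \cdot p\circ\varphi \in \HH(b)$ for every polynomial $p$, given (H1) $u \in \HH(b)$ and (H2) $u\varphi \in \HH(b)$. Since $\HH(b)$ is a vector space and $p$ is a linear combination of monomials $z^k$, by linearity it suffices to prove that $u\varphi^k \in \HH(b)$ for every integer $k \geq 0$. The cases $k=0$ and $k=1$ are exactly (H1) and (H2), so the plan is to proceed by induction on $k$, assuming $u\varphi^{k-1}, u\varphi^{k-2} \in \HH(b)$ (equivalently, that $u\varphi^j \in \HH(b)$ for all $0 \leq j \leq k-1$) and deducing $u\varphi^k \in \HH(b)$.

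The key idea is to exploit the decomposition $\HH(b) = \mathcal{M}(a) \oplus \P_{N-1}$ from \eqref{eq:decomposition-Hb}, together with the characterization \eqref{eq:description-Ma-avec-derivee} of $\mathcal{M}(a)$ via vanishing of derivatives at the $\zeta_i$, and the factorization criterion from \Cref{thm-decomposition-spaces-Mabar}. First I would record that the functions $a_i(z) = (z-\zeta_i)^{m_i}$, $1 \leq i \leq n$, are (after renormalizing to be in the ball of $H^\infty$) outer and pairwise satisfy the corona condition $\inf_{\mathbb D}(|a_i| + |a_j|) > 0$ for $i \neq j$ (since the $\zeta_i$ are distinct points on $\mathbb{T}$, the zero sets $\{\zeta_i\}$ and $\{\zeta_j\}$ are disjoint and the functions are bounded below near each other's zeros). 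Hence \Cref{thm-decomposition-spaces-Mabar} gives $\mathcal{M}(\overline a) = \bigcap_{i=1}^n \mathcal{M}(\overline{a_i})$, and since $\HH(b) = \mathcal{M}(\overline a)$ by \eqref{eq:Hb-coincide-avecMabar}, membership in $\HH(b)$ is equivalent to simultaneous membership in each $\mathcal{M}(\overline{a_i})$, a "local" condition at each $\zeta_i$. Thus the problem reduces to a local analysis at each boundary zero $\zeta_i$ separately.

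For the local step at a fixed $\zeta_i$: membership of a function $h \in H^2$ in $\mathcal{M}(\overline{a_i})$ is governed by boundary smoothness of order $m_i$ at $\zeta_i$ (roughly, $h$ has non-tangential limits of itself and its derivatives up to order $m_i-1$ at $\zeta_i$, with appropriate integrability of the "remainder"). The inductive hypothesis tells us that $u\varphi^{k-1} \in \HH(b)$, so it has the required boundary behavior at $\zeta_i$; I want to transfer this to $u\varphi^k = (u\varphi^{k-1})\cdot\varphi$. The obstacle is that $\varphi$ itself need not be in $\HH(b)$ nor even bounded near $\zeta_i$, so one cannot simply multiply. The resolution is the dichotomy already flagged in the introduction: at each $\zeta_i$ either some derivative $u^{(\ell)}(\zeta_i) \neq 0$ for $0 \leq \ell \leq m_i - 1$, or all of them vanish. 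In the second case $u \in a_i H^2$ locally, and then (using the corona/factorization structure) $u\varphi^k$ inherits enough vanishing at $\zeta_i$ regardless of $\varphi$'s behavior, because $u \cdot \varphi^k = u\varphi^{k-1}\cdot \varphi$ and the strong vanishing of $u$ at $\zeta_i$ dominates. In the first case, \Cref{Lemma:phi-in-M-ai-li} (the referenced lemma on existence of non-tangential limits of $\varphi$ at $\zeta_i$) guarantees $\varphi$ has a finite non-tangential limit at $\zeta_i$, hence is "tame" there, so multiplication by $\varphi$ preserves the relevant boundary-derivative conditions. Combining: in both cases $u\varphi^k$ lies in $\mathcal{M}(\overline{a_i})$ for every $i$, hence in $\bigcap_i \mathcal{M}(\overline{a_i}) = \HH(b)$.

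The main obstacle I anticipate is making the local multiplication step rigorous: precisely quantifying how many derivatives $u\varphi^{k-1}$ has at $\zeta_i$ versus how many $u\varphi^k$ needs, and handling the remainder terms so that the product lands back in $a_i H^2$ plus a polynomial correction. Concretely, one writes $u\varphi^{k-1} = a_i g_i + q_i$ locally (with $q_i \in \P_{m_i-1}$ the Taylor polynomial and $a_i g_i$ the remainder in the appropriate Toeplitz-range sense), then $u\varphi^k = \varphi(a_i g_i + q_i) = a_i(\varphi g_i) + \varphi q_i$; one must argue $\varphi g_i \in H^2$ locally near $\zeta_i$ and that $\varphi q_i$ contributes only a polynomial (up to an $a_i H^2$ correction) — this is exactly where the non-tangential limit of $\varphi$ at $\zeta_i$ (case one) or the extra vanishing of $u$ (case two) is used. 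Leibniz's rule for the derivatives of the product and the decomposition \eqref{eq:partie-polynomial-interpolation} will be the computational backbone, but the conceptual heart is the reduction to local conditions via \Cref{thm-decomposition-spaces-Mabar} and the $u/\varphi$ dichotomy.
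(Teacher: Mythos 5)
Your plan follows essentially the same route as the paper: reduce to the local spaces $\mathcal{M}(\overline{a_i})$ via \Cref{thm-decomposition-spaces-Mabar} and \eqref{eq:Hb-coincide-avecMabar}, split according to whether all derivatives of $u$ up to order $m_i-1$ vanish at $\zeta_i$, and in the non-vanishing case combine \Cref{Lemma:phi-in-M-ai-li} with a local multiplication argument (the paper's \Cref{Lemma4}); the only cosmetic difference is that you induct on $k$ one factor of $\varphi$ at a time, whereas the paper treats $\varphi^k$ at once using that $\mathcal{M}(\overline{a_{i,\ell_i}})\cap H^\infty$ is a multiplier algebra. One point of precision: when $m_i-\ell_i\geq 2$, the mere existence of a non-tangential limit of $\varphi$ at $\zeta_i$ (your phrase ``hence is tame there'') is not enough for multiplication by $\varphi$ to preserve membership in $\mathcal{M}(\overline{a_i})$; you must use the full conclusion of \Cref{Lemma:phi-in-M-ai-li}, namely $\varphi\in\mathcal{M}(\overline{a_{i,\ell_i}})$ with $a_{i,\ell_i}(z)=(z-\zeta_i)^{m_i-\ell_i}$, so that the deficit of $\ell_i$ orders of boundary smoothness of $\varphi$ at $\zeta_i$ is exactly compensated by the order-$\ell_i$ vanishing of $u$ (which persists for $u\varphi^{k-1}$ by the argument of \Cref{remark6}) --- this compensation is precisely what \Cref{Lemma4} packages, and your sketched computation with $u\varphi^{k-1}=a_ig_i+q_i$ does implicitly rely on it.
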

It should be noted that, if we assume furthermore that $\varphi\in\HH(b)$, then the proof of \Cref{thm5} is trivial, because, according to \eqref{eq:multiplier3434}, we deduce that $\varphi\in\mathfrak{M}(\HH(b))$, and then \ref{H1} and \ref{H2} immediately imply that $u\varphi^k\in \HH(b)$ for every $k\geq 0$. In the general case, the proof of \Cref{thm5} is more complex and we need to establish several technical lemmas. We start with the the following result.

 \begin{lemma}\label{Lemma:phi-in-M-ai-li}
     Let $u$ and $\varphi$ satisfy \ref{H1} and \ref{H2}. Assume also that there exist $1\leq i\leq n$ and $0\leq \ell_i\leq m_i-1 $ such that $u(\zeta_i)=\cdots=u^{(\ell_i-1)}(\zeta_i)=0$ and $u^{(\ell_i)}(\zeta_i)\not=0$. Then $\phi\in \mathcal{M}(\overline{{a_{i,\ell_i}}}),$ where $a_{i,\ell_i}(z)=(z-\zeta_i)^{m_i-\ell_i}.$ In particular, the function $\varphi$ has a non-tangential limit at $\zeta_i$.
\end{lemma}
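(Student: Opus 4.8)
The plan is to localise the problem at the single boundary point $\zeta:=\zeta_i$. Write $m:=m_i$, $\ell:=\ell_i$ and $r:=m-\ell\ge 1$, so that $a_{i,\ell_i}(z)=(z-\zeta)^{r}$. By \ref{H1} and \ref{H2} we have $u,u\phi\in\HH(b)=\mathcal M(\overline a)$ with $a=a_1\cdots a_n$, so the trivial inclusion \eqref{eq11ZEE:inclusionMabar} gives $u,u\phi\in\mathcal M(\overline{a_i})=\mathcal M\big(\overline{(z-\zeta)^{m}}\big)$; I will use the single-point instance of \eqref{eq:decomposition-Hb}--\eqref{eq:Hb-coincide-avecMabar}, namely $\mathcal M\big(\overline{(z-\zeta)^{k}}\big)=(z-\zeta)^{k}H^2\oplus\P_{k-1}$. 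The goal then becomes to produce a decomposition $\phi=p_\phi+(z-\zeta)^{r}g$ with $g\in H^2$ and $p_\phi\in\P_{r-1}$: this gives $\phi\in\mathcal M\big(\overline{(z-\zeta)^{r}}\big)=\mathcal M\big(\overline{a_{i,\ell_i}}\big)$, and, since the factor $(z-\zeta)^{r}$ annihilates the $H^2$ part, $\phi$ then has non-tangential limit $p_\phi(\zeta)$ at $\zeta_i$.

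\emph{Step 1: factoring out the zero of $u$.} Write $u=(z-\zeta)^{m}G+P$ and $u\phi=(z-\zeta)^{m}G'+P'$ with $G,G'\in H^2$ and $P,P'\in\P_{m-1}$. By \eqref{eq:partie-polynomial-interpolation}, $P$ interpolates the non-tangential derivatives $u^{(k)}(\zeta)$, $0\le k\le m-1$, so the hypothesis on $u$ forces $P=(z-\zeta)^{\ell}Q$ with $Q\in\P_{r-1}$ and $Q(\zeta)=u^{(\ell)}(\zeta)/\ell!\ne 0$; hence $u=(z-\zeta)^{\ell}V$ with $V:=(z-\zeta)^{r}G+Q\in(z-\zeta)^{r}H^2\oplus\P_{r-1}$ and $V(\zeta)=Q(\zeta)\ne 0$. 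Next I claim $P'$ also vanishes at $\zeta$ to order at least $\ell$: since $|u\phi|\le|u|$ on $\mathbb D$ and $|V(z)|=O\big(|z-\zeta|^{-1/2}\big)$ as $z\to\zeta$ non-tangentially (the $H^2$ growth estimate and $1-|z|\asymp|z-\zeta|$ in a Stolz angle), we get $u\phi(z)=O\big(|z-\zeta|^{\ell-1/2}\big)$ there, hence (as $(z-\zeta)^{m}G'(z)=O\big(|z-\zeta|^{m-1/2}\big)$ and $m>\ell$) also $P'(z)=O\big(|z-\zeta|^{\ell-1/2}\big)$ near $\zeta$, and a polynomial vanishes only to integer order. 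Thus $P'=(z-\zeta)^{\ell}Q'$ with $Q'\in\P_{r-1}$, so $u\phi=(z-\zeta)^{\ell}W$ with $W:=(z-\zeta)^{r}G'+Q'\in(z-\zeta)^{r}H^2\oplus\P_{r-1}$; cancelling $(z-\zeta)^{\ell}$ (no zero in $\mathbb D$) in $u\phi=\phi u$ gives $W=\phi V$.

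\emph{Step 2: the algebraic core.} Since $Q(\zeta)\ne 0$ and the ring $\mathbb C[z]/\big((z-\zeta)^{r}\big)$ is local, with units exactly the classes of polynomials not vanishing at $\zeta$, there are polynomials $e\in\P_{r-1}$ and $\eta$ with $e\,Q=1+(z-\zeta)^{r}\eta$. From $W=\phi V$, i.e. $(z-\zeta)^{r}G'+Q'=(z-\zeta)^{r}G\phi+Q\phi$, we obtain $Q\phi=(z-\zeta)^{r}(G'-G\phi)+Q'$; multiplying by $e$ and using $e\,Q=1+(z-\zeta)^{r}\eta$ gives
\[
\phi=e\,Q'+(z-\zeta)^{r}\big(e(G'-G\phi)-\eta\phi\big).
\]
Writing the polynomial $e\,Q'$ as $p_\phi+(z-\zeta)^{r}\mu$ with $p_\phi\in\P_{r-1}$ and $\mu$ a polynomial, we get $\phi=p_\phi+(z-\zeta)^{r}g$ with $g:=\mu+e(G'-G\phi)-\eta\phi$. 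As $G,G'\in H^2$, $\phi\in H^\infty$ (a self-map of $\mathbb D$), and $e,\eta,\mu$ are polynomials, $g\in H^2$, which is the decomposition sought in the first paragraph; this completes the proof.

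\emph{On the main difficulty.} The crux is Step 2, and the point is that one must \emph{not} simply write $\phi=W/V$: the function $V$ may vanish inside $\mathbb D$ (and its boundary values need not be bounded below away from $\zeta$), so $W/V$ has no reason to lie in $H^2$. Instead one inverts only the polynomial part $Q$ of $V$, and only in the finite-dimensional ring $\mathbb C[z]/\big((z-\zeta)^{r}\big)$, where $Q$ is genuinely invertible precisely because $Q(\zeta)\ne 0$; this reduces everything to polynomial bookkeeping together with the trivial fact $H^\infty\cdot H^2\subseteq H^2$. A more routine but still necessary technical point is the claim in Step 1 that $u\phi$ vanishes at $\zeta$ to order at least $\ell$, resting on the contraction bound $|u\phi|\le|u|$, the $H^2$ growth estimate, and the fact that polynomials vanish only to integer order.
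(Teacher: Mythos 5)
Your proof is correct, and while it shares the paper's skeleton (decompose $u$ and $u\varphi$ in $\mathcal M(\overline{a_i})=(z-\zeta_i)^{m_i}H^2\oplus\mathscr P_{m_i-1}$, show the polynomial part of $u\varphi$ is divisible by $(z-\zeta_i)^{\ell_i}$, then solve $W=V\varphi$ for $\varphi$), the execution of the two key steps is genuinely different. For the divisibility of the polynomial part of $u\varphi$, the paper divides the identity by $(z-\zeta_i)^{\ell_i}$ and observes that the remainder $r_1/(z-\zeta_i)^{\ell_i}$ would have to lie in $H^2$, which forces $r_1=0$; you instead use the pointwise contraction $|u\varphi|\le |u|$ together with the non-tangential $H^2$ growth estimate and the fact that a polynomial vanishes only to integer order. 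Both are valid; yours is a nice self-contained pointwise argument, the paper's is a one-line boundary-integrability argument. The more substantial divergence is in the final step: the paper runs an induction of length $m_i-\ell_i$, gaining one order of vanishing of $\varphi-q_\ell$ at a time by repeatedly re-decomposing $(z-\zeta_i)^{\ell}\varphi_\ell u$ in $\mathcal M(\overline{a_i})$, whereas you solve the congruence $Q\varphi\equiv Q'\pmod{(z-\zeta_i)^{r}H^2}$ in one shot by inverting $Q$ in the local ring $\mathbb C[z]/\big((z-\zeta_i)^{r}\big)$, which is legitimate precisely because $Q(\zeta_i)=u^{(\ell_i)}(\zeta_i)/\ell_i!\neq0$. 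This buys a shorter and more conceptual argument (no induction, and the role of the non-vanishing of $u^{(\ell_i)}(\zeta_i)$ is isolated as a unit condition in a finite-dimensional quotient ring), at the cost of importing a small amount of commutative algebra; your closing remark correctly identifies why one cannot simply write $\varphi=W/V$. All the background facts you invoke (the single-point decomposition $\mathcal M(\overline{(z-\zeta_i)^{k}})=(z-\zeta_i)^{k}H^2\oplus\mathscr P_{k-1}$ and the interpolation property \eqref{eq:partie-polynomial-interpolation}) are also used in the paper, so nothing is circular.
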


\begin{proof}
We know from \eqref{eq:Hb-coincide-avecMabar} that  $\HH(b)=\mathcal M(\overline a)$, where $a=\prod_{j=1}^n a_j$ and $a_j(z)=(z-\zeta_j)^{m_j}$. Hence, \Cref{thm-decomposition-spaces-Mabar} implies that $u$ and $u\phi$ belong to $\mathcal{M}(\overline{{a_i}})$, for every $1\leq i\leq n.$ In particular, using \eqref{eq:partie-polynomial-interpolation}, we can write 
 \begin{equation}\label{u-representation}
 u(z)=(z-\zeta_i)^{m_i}u_i(z)+\sum_{k=\ell_i}^{m_i-1}\frac{u^{(k)}(\zeta_i)}{k!}(z-\zeta_i)^k,
 \end{equation}
for some $u_i\in H^2$. Multiplying by $\phi$, we get that
 \begin{equation*} 
 u(z)\phi(z)=(z-\zeta_i)^{m_i}u_i(z)\phi(z)+\sum_{k=\ell_i}^{m_i-1}\frac{u^{(k)}(\zeta_i)}{k!}(z-\zeta_i)^k\phi(z).
 \end{equation*}
 But since $u\phi\in\mathcal{M}(\overline{a_i})$, similarly there exist $v_i\in H^2$ and $p_1\in\P_{m_i-1}$
 such that $u(z)\phi(z)=(z-\zeta_i)^{m_i}v_i(z)+p_1(z)$. Now write $p_1(z)=(z-\zeta_i)^{\ell_i}\widetilde q_1(z)+r_1(z)$, where $\widetilde q_1, r_1$ are polynomials and ${\deg}(r_1)<\ell_i. $ We thus deduce 
  \begin{equation*} 
 (z-\zeta_i)^{m_i} u_i(z) \phi(z)+\sum_{k=\ell_i}^{m_i-1} \frac{u^{(k)}(\zeta_i)}{k!}(z-\zeta_i)^k\phi(z)=(z-\zeta_i)^{m_i}v_i(z) +(z-\zeta_i)^{\ell_i}\widetilde q_1(z)+r_1(z).
 \end{equation*}
 Dividing by $(z-\zeta_i)^{\ell_i}$, we see that $r_1/(z-\zeta_i)^{\ell_i}\in H^2$ and, since ${\deg}(r_1)<\ell_i $, necessarily $r_1=0$. Hence, we get
  \begin{equation*} 
 (z-\zeta_i)^{m_i} u_i(z)\phi(z) +\sum_{k=\ell_i}^{m_i-1} \frac{u^{(k)}(\zeta_i)}{k!}(z-\zeta_i)^k\phi(z)=(z-\zeta_i)^{m_i}v_i(z) +(z-\zeta_i)^{\ell_i}\widetilde q_1(z) .
 \end{equation*}
 Since $\ell_i\leq m_i-1$, we can rewrite this as
  \begin{equation*} 
  \frac{u^{(\ell_i)}(\zeta_i)}{\ell_i!}(z-\zeta_i)^{\ell_i}\phi(z)=(z-\zeta_i)^{\ell_{i+1}}w_1(z) +(z-\zeta_i)^{\ell_i}\widetilde q_1(z),
 \end{equation*}
 for some $w_1\in H^2$. Now remember that $u^{(\ell_i)}(\zeta_i)\not=0$, so we deduce that 
\[
\phi(z)=(z-\zeta_i)\phi_1(z)+q_1(z),
\]
where $\phi_1\in H^2$ and $q_1$ is a polynomial. By induction, we assume that, for some $1\leq \ell\leq m_i-\ell_i-1$, 
\begin{equation}\label{eq:induction-phi-limite-radiale23344}
\phi(z)=(z-\zeta_i)^\ell\phi_\ell(z)+q_\ell(z),
\end{equation}
where $\phi_\ell\in H^2$ and $q_\ell$ is a polynomial. Since $u\phi\in \mathcal{M}(\overline{a_i})$ and $q_\ell u\in \mathcal{M}(\overline{a_i})$ (because $q_\ell$ is a polynomial, whence a multiplier of $\mathcal{M}(\overline{a_i}))$, we get from \eqref{eq:induction-phi-limite-radiale23344} that the function $(z-\zeta_i)^\ell\phi_\ell u$ belongs to $\mathcal{M}(\overline{a_i})$. Using \eqref{u-representation}, we have 
\begin{equation*} 
 (z-\zeta_i)^{\ell}   \phi_\ell(z) u(z)=(z-\zeta_i)^{m_i}(z-\zeta_i)^\ell\phi_\ell(z) u_i(z) +\sum_{k=\ell_i}^{m_i-1} \frac{u^{(k)}(\zeta_i)}{k!}(z-\zeta_i)^{k+\ell}\phi_\ell(z).
 \end{equation*}
 But $v_\ell:=(z-\zeta_i)^\ell\phi_\ell u_i=(\phi-q_\ell)u_i\in H^2$ and we can write 
 \begin{equation*} 
 (z-\zeta_i)^{\ell}   \phi_\ell(z) u(z)=(z-\zeta_i)^{m_i} v_\ell(z) +\sum_{k=\ell_i}^{m_i-1} \frac{u^{(k)}(\zeta_i)}{k!}(z-\zeta_i)^{k+\ell}\phi_\ell(z).
 \end{equation*}
 Since $(z-\zeta_i)^\ell\phi_\ell u\in\mathcal{M}(\overline{a_i})$, we can also write $(z-\zeta_i)^\ell\phi_\ell u=(z-\zeta_i)^{m_i}w+q$, where $w\in H^2$  and $q\in\P_{m_i-1}$. Now, using the euclidean division, write  $q=(z-\zeta_i)^{\ell_i+\ell}p+r,$  for some polynomials $p$ and $r$ such that ${\deg}(r)<\ell_i+\ell$. Then we obtain
 \begin{equation*} 
 (z-\zeta_i)^{m_i} v_\ell (z)  +\sum_{k=\ell_i}^{m_i-1} \frac{u^{(k)}(\zeta_i)}{k!}(z-\zeta_i)^{k+\ell}\phi_\ell(z)=(z-\zeta_i)^{m_i}w(z) +(z-\zeta_i)^{\ell_i+\ell}p(z)+r(z) .
 \end{equation*}
Observe that $\ell_i+\ell\leq m_i-1$ and so dividing by $(z-\zeta_i)^{\ell_i+\ell}$ we see that $r/(z-\zeta_i)^{\ell_i+\ell}\in H^2$ and thus $r=0$. In particular, 
   \begin{equation*} 
  \frac{u^{(\ell_i)}(\zeta_i)}{\ell_i!}(z-\zeta_i)^{\ell_i+\ell}\phi_\ell(z)=(z-\zeta_i)^{\ell_{i }+\ell+1}w_2(z) +(z-\zeta_i)^{\ell_i+\ell}p(z),
 \end{equation*}
for $w_2\in H^2$. Dividing by $ \frac{u^{(\ell_i)}(\zeta_i)}{\ell_i!}(z-\zeta_i)^{\ell_i+\ell}$, we see that $ \phi_\ell=(z-\zeta_i)w_3 +p_2$, where $w_3\in H^2,$ and $p_2$ is a polynomial. Thus, with \eqref{eq:induction-phi-limite-radiale23344}, we obtain
 \begin{align*}
     \phi(z)&=(z-\zeta_i)^\ell\left( (z-\zeta_i)w_3(z) + p_2(z)\right)+q_\ell(z)\\
 &= (z-\zeta_i)^{\ell+1} \phi_{\ell+1}(z)  +q_{\ell+1}(z),
 \end{align*}
 where $\phi_{\ell+1}=w_3\in H^2$ and $q_{\ell+1}=(z-\zeta_i)^\ell p_2+q_\ell$ is a polynomial. By induction hypothesis, we thus deduce that we have a decomposition as in \eqref{eq:induction-phi-limite-radiale23344} for all $1\le \ell\le m_i-\ell_i$, and so, in particular,
 \begin{equation}\label{eq:34EZSDSF2V}
    \phi(z)=(z-\zeta_i)^{m_i-\ell_i}\phi_{m_i-\ell_i}(z)+q_{m_i-\ell_i}(z),
 \end{equation}
  for some $\phi_{m_i-\ell_i}\in H^2$ and some polynomial $ q_{m_i-\ell_i}$. Thus, $\phi\in\mathcal{M}(\overline{a_{i,\ell_i}})$. Now, the fact that $\phi$ has a non-tangential limit at $\zeta_i$ follows immediately from \eqref{eq:34EZSDSF2V} and the fact that $m_i-\ell_i\geq 1$, which concludes the proof of \Cref{Lemma:phi-in-M-ai-li}.
\end{proof}

Lemma \ref{Lemma:phi-in-M-ai-li} immediately implies the following result, which is interesting in its own right.

\begin{corollary}\label{cor:phi-appartient-a-Hb}
Let $u$ and $\phi$ such that $u,u\varphi\in\HH(b)$. 
Assume that for every $1\leq i\leq n$, $u(\zeta_i)\neq 0$. 
Then $\varphi$ itself belongs to $\HH(b)$.    
\end{corollary}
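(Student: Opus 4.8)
The plan is to deduce this corollary directly from \Cref{Lemma:phi-in-M-ai-li} and the decomposition \eqref{eq:decomposition-Hb}. Since $u\in\HH(b)$, for each $1\leq i\leq n$ the function $u$ and its derivatives up to order $m_i-1$ have non-tangential limits at $\zeta_i$, and the hypothesis $u(\zeta_i)\neq 0$ means precisely that we are in the situation of \Cref{Lemma:phi-in-M-ai-li} with $\ell_i=0$. Indeed, $u(\zeta_i)=u^{(0)}(\zeta_i)\neq 0$, so the condition ``$u(\zeta_i)=\cdots=u^{(\ell_i-1)}(\zeta_i)=0$ and $u^{(\ell_i)}(\zeta_i)\neq 0$'' is satisfied vacuously on the vanishing part and trivially on the non-vanishing part. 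Therefore \Cref{Lemma:phi-in-M-ai-li} applies and gives $\phi\in\mathcal{M}(\overline{a_{i,0}})$, where $a_{i,0}(z)=(z-\zeta_i)^{m_i}=a_i(z)$.

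Thus I obtain $\phi\in\mathcal{M}(\overline{a_i})$ for every $1\leq i\leq n$. The next step is to intersect these memberships: $\phi\in\bigcap_{i=1}^n\mathcal{M}(\overline{a_i})$. Since the $\zeta_i$ are distinct points of $\mathbb{T}$, the outer polynomials $a_i(z)=(z-\zeta_i)^{m_i}$ satisfy the corona-type condition $\inf_{\mathbb D}(|a_i|+|a_j|)>0$ for $i\neq j$ (the only common potential zero of $a_i$ and $a_j$ on $\overline{\mathbb D}$ would be a point equal to both $\zeta_i$ and $\zeta_j$, which is impossible; one checks the infimum is positive by a compactness/continuity argument on $\overline{\mathbb D}$, noting $a_i$ and $a_j$ have no common zero there). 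Hence \Cref{thm-decomposition-spaces-Mabar} gives
\[
\bigcap_{i=1}^n\mathcal{M}(\overline{a_i})=\mathcal{M}(\overline{a_1\cdots a_n})=\mathcal{M}(\overline{a}).
\]
By \eqref{eq:Hb-coincide-avecMabar}, $\mathcal{M}(\overline a)=\HH(b)$, and therefore $\phi\in\HH(b)$, as claimed.

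There is essentially no serious obstacle here: the corollary is a clean packaging of the lemma plus \Cref{thm-decomposition-spaces-Mabar} plus identity \eqref{eq:Hb-coincide-avecMabar}. The only point requiring a word of care is the verification that the family $(a_i)_{1\leq i\leq n}$ satisfies condition $(i)$ of \Cref{thm-decomposition-spaces-Mabar}, i.e. the pairwise corona condition; this is immediate from the fact that distinct $\zeta_i,\zeta_j$ force $a_i$ and $a_j$ to have no common zero in $\overline{\mathbb D}$, so $|a_i|+|a_j|$ is a continuous, strictly positive function on the compact set $\overline{\mathbb D}$ and hence is bounded below by a positive constant. Everything else is a direct citation of the preceding results.
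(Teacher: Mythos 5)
Your argument is correct and is essentially the paper's own proof: apply \Cref{Lemma:phi-in-M-ai-li} with $\ell_i=0$ to get $\phi\in\mathcal M(\overline{a_i})$ for each $i$, then combine \Cref{thm-decomposition-spaces-Mabar} with \eqref{eq:Hb-coincide-avecMabar} to conclude $\phi\in\mathcal M(\overline a)=\HH(b)$. Your explicit check of the pairwise corona condition for the $a_i$ is a sound (and welcome) addition that the paper leaves implicit.
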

\begin{proof}
We can apply \Cref{{Lemma:phi-in-M-ai-li}} with $\ell_i=0$ for every $1\leq i\leq n$. In this case, $a_{i,\ell_i}=a_i$, and we deduce that $\phi\in\mathcal M(\overline{a_i})$ for every $1\leq i\leq n$. The concludion follows from \eqref{eq:Hb-coincide-avecMabar} and \Cref{thm-decomposition-spaces-Mabar}.
\end{proof}

\begin{remark}
Note that under the hypothesis of \Cref{cor:phi-appartient-a-Hb}, the proof of \Cref{thm5} is easy. Indeed, according to \Cref{cor:phi-appartient-a-Hb}, the function $\varphi$ belong to $\HH(b)\cap H^\infty$, whence is a multiplier according to \eqref{eq:multiplier3434}. Now, it is clear that if $u\in\HH(b)$, then we have $u\cdot p\circ\phi\in\HH(b)$.
\end{remark}

The second step to prove \Cref{thm5} is the following result.
 \begin{lemma}\label{Lemma4}
Let $a_i(z)=(z-\zeta_i)^{m_i}$ with $\zeta_i\in\mathbb T$ and $m_i\in\mathbb N$, let $a_{i,\ell_i}(z)=(z-\zeta_i)^{m_i-\ell_i}$ for some $0\leq\ell_i\leq m_i-1$,
let $v\in \mathcal{M}(\overline{a_i})$ such that $v(\zeta_i)= v^\prime(\zeta_i)=\cdots=v^{(\ell_i-1)}(\zeta_i)=0$ and let $\psi\in \mathcal{M}(\overline{a_{i,\ell_i}})\cap H^\infty$. Then $\psi v\in \mathcal{M}(\overline{a_i} )$.
 \end{lemma}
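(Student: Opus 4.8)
Write $v = T_{\overline{a_i}} g$ for some $g \in H^2$, and $\psi = T_{\overline{a_{i,\ell_i}}} h$ for some $h \in H^2$. The goal is to produce $k \in H^2$ with $\psi v = T_{\overline{a_i}} k$. Since everything here happens at a single point $\zeta_i$ on the circle, the plan is to reduce to an explicit computation with the local factors $(z-\zeta_i)^{m_i}$ and $(z-\zeta_i)^{m_i-\ell_i}$, using the interpolation description \eqref{eq:partie-polynomial-interpolation}--\eqref{eq:description-Ma-avec-derivee} of $\mathcal M(\overline{a_i})$ inside $\HH(b)$ when $b$ has a single zero $\zeta_i$ with multiplicity $m_i$; equivalently, recall that $f \in \mathcal M(\overline{a_i})$ iff $f \in H^2$ and $f/(z-\zeta_i)^{m_i} \in H^2$ (which is the statement $f/a_i \in H^2$, valid because $a_i$ is a polynomial vanishing only on $\mathbb T$), and more generally $f \in \mathcal M(\overline{a_{i,\ell_i}})$ iff $f \in H^2$ and $f/(z-\zeta_i)^{m_i-\ell_i} \in H^2$.

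First I would record the hypothesis on $v$: the vanishing conditions $v(\zeta_i) = \cdots = v^{(\ell_i-1)}(\zeta_i) = 0$ together with $v \in \mathcal M(\overline{a_i})$ mean $v$ has a non-tangential zero of order at least $\ell_i$ at $\zeta_i$, so I can write $v(z) = (z-\zeta_i)^{\ell_i} v_1(z)$ with $v_1 \in H^2$; moreover $v/(z-\zeta_i)^{m_i} \in H^2$, so in fact $v_1/(z-\zeta_i)^{m_i-\ell_i} \in H^2$, i.e. $v_1 \in \mathcal M(\overline{a_{i,\ell_i}})$. Next, from $\psi \in \mathcal M(\overline{a_{i,\ell_i}}) \cap H^\infty$, I get $\psi/(z-\zeta_i)^{m_i-\ell_i} \in H^2$; call it $\psi_1 \in H^2$. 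Then
\[
\frac{\psi v}{(z-\zeta_i)^{m_i}} = \frac{\psi (z-\zeta_i)^{\ell_i} v_1}{(z-\zeta_i)^{m_i}} = \frac{\psi}{(z-\zeta_i)^{m_i-\ell_i}}\cdot v_1 = \psi_1 v_1.
\]
So it suffices to check $\psi_1 v_1 \in H^2$. Here $\psi_1 = \psi/(z-\zeta_i)^{m_i-\ell_i}$ with $\psi \in H^\infty$, so $\psi_1$ need not be bounded; but $v_1 = v/(z-\zeta_i)^{\ell_i} \in \mathcal M(\overline{a_{i,\ell_i}})$, so $v_1$ has a zero of order $\geq m_i-\ell_i$ at $\zeta_i$ in the $H^2$-sense, i.e. $v_1/(z-\zeta_i)^{m_i-\ell_i} \in H^2$. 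Therefore $\psi_1 v_1 = \psi \cdot \big(v_1/(z-\zeta_i)^{m_i-\ell_i}\big)$, a product of an $H^\infty$ function and an $H^2$ function, hence in $H^2$. This gives $\psi v/(z-\zeta_i)^{m_i} \in H^2$; combined with $\psi v \in H^2$ (product of the bounded $\psi$ and $v\in H^2$), we conclude $\psi v \in \mathcal M(\overline{a_i})$.

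The main obstacle I anticipate is justifying rigorously that "non-tangential vanishing to order $\ell_i$'' of $v$ at $\zeta_i$ is equivalent to divisibility by $(z-\zeta_i)^{\ell_i}$ in $H^2$, and that the divisibility behaves multiplicatively as used above — i.e. controlling how the boundary zero distributes between the factors $\psi$ and $v$. Concretely, one must be careful that $\psi_1 v_1$ is interpreted correctly: rather than splitting the exponent $m_i$ symmetrically I would split it asymmetrically (all $\ell_i$ from $v$, all $m_i-\ell_i$ from $\psi$), which is precisely what the hypotheses give. A cleaner alternative avoiding pointwise-derivative arguments is to work entirely with Toeplitz operators: using \eqref{eq:relation-commutation-Toeplitz}, $T_{\overline{a_i}} = T_{\overline{a_{i,\ell_i}}} T_{\overline{(z-\zeta_i)^{\ell_i}}}$, and one wants to invert $T_{\overline{a_{i,\ell_i}}}$ on the relevant vectors; since $a_{i,\ell_i}$ is outer, $T_{\overline{a_{i,\ell_i}}}$ is injective with dense range, and the content of the lemma is that $\psi v$ lands in that range with preimage in $H^2$. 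I would present the first, direct argument as the main line and remark on the operator-theoretic reformulation, since the polynomial nature of $a_i$ and the explicit factor $(z-\zeta_i)$ make the divisibility manipulations completely elementary once the vanishing-order translation is in hand.
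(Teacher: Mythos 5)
Your argument rests on a characterization that is false: you assert that $f\in\mathcal M(\overline{a_i})$ if and only if $f\in H^2$ and $f/(z-\zeta_i)^{m_i}\in H^2$. That describes $\mathcal M(a_i)=a_iH^2$, the range of the \emph{analytic} Toeplitz operator $T_{a_i}$; the range of the co-analytic operator $T_{\overline{a_i}}$ is strictly larger, namely $\mathcal M(\overline{a_i})=a_iH^2\oplus\P_{m_i-1}$ (see \eqref{eq:decomposition-Hb} and \eqref{eq:description-Ma-avec-derivee}: it is $a_iH^2$ that is cut out by the vanishing conditions, not $\mathcal M(\overline{a_i})$ itself). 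For instance $T_{\overline{z-\zeta_i}}1=-\overline{\zeta_i}$, so the constants lie in $\mathcal M(\overline{(z-\zeta_i)})$, yet $1/(z-\zeta_i)\notin H^2$ because $\zeta_i\in\mathbb T$. This error propagates through every step: the claims ``$v/(z-\zeta_i)^{m_i}\in H^2$'', ``$\psi/(z-\zeta_i)^{m_i-\ell_i}\in H^2$'' and ``$v_1/(z-\zeta_i)^{m_i-\ell_i}\in H^2$'' are all false in general (take $\psi\equiv 1$, which is a legitimate choice of $\psi$ here), and your target statement ``$\psi v/(z-\zeta_i)^{m_i}\in H^2$'' is strictly stronger than the conclusion of the lemma and is itself false: with $m_i=2$, $\ell_i=1$, $v(z)=z-\zeta_i$ and $\psi\equiv 1$, one has $\psi v=z-\zeta_i\in\P_1\subset\mathcal M(\overline{a_i})$ as the lemma asserts, but $\psi v/(z-\zeta_i)^2=1/(z-\zeta_i)\notin H^2$.

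What your reduction misses is precisely the polynomial summands in the decompositions, and those are where all the work is. The correct route (the one the paper takes) is to write $v=(z-\zeta_i)^{m_i}v_1+\sum_{k=\ell_i}^{m_i-1}\frac{v^{(k)}(\zeta_i)}{k!}(z-\zeta_i)^k$ and $\psi=(z-\zeta_i)^{m_i-\ell_i}\psi_1+q$ with $\psi_1\in H^2$ and $q\in\P_{m_i-\ell_i-1}$, expand the product, and check each piece separately: the term $(z-\zeta_i)^{m_i}v_1\psi$ lies in $a_iH^2$ because $\psi\in H^\infty$; the cross terms $(z-\zeta_i)^{k}\cdot(z-\zeta_i)^{m_i-\ell_i}\psi_1$ with $k\ge\ell_i$ acquire a full factor $(z-\zeta_i)^{m_i}$ times an $H^2$ function; and the remaining term is a polynomial, which belongs to $\mathcal M(\overline{a_i})$ without being divisible by $a_i$ in $H^2$. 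Your asymmetric splitting of the exponent is the right instinct, but it must be carried out inside the direct-sum decomposition $a_iH^2\oplus\P_{m_i-1}$ rather than via $H^2$-divisibility by $(z-\zeta_i)^{m_i}$. As written, the proof does not establish the lemma.
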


\begin{proof}
The case $\ell_i=0$ is straightforward. Indeed, in this case, we have $a_{i,\ell_i}=a_i$ and, according to \eqref{eq:multiplier3434}, the function $\psi$ belongs to $\mathcal M(\overline{a_i})\cap H^\infty=\mathfrak{M}(\mathcal M(\overline{a_i}))$. 
  
Assume now that $\ell_i\geq 1$.
According to \eqref{eq:decomposition-Hb} and \eqref{eq:partie-polynomial-interpolation}, there exists $v_1\in H^2$ such that 
\begin{align*}
v(z)=(z-\zeta_i)^{m_i}v_1(z)+\sum_{k=\ell_i}^{m_i-1}\frac{v^{(k)}(\zeta_i)}{k!}(z-\zeta_i)^k.
\end{align*}
Then
\begin{align*}
\psi(z) v(z)=(z-\zeta_i)^{m_i}v_1(z)\psi(z)+\sum_{k=\ell_i}^{m_i-1}\frac{v^{(k)}(\zeta_i)}{k!}(z-\zeta_i)^k\psi(z).      \end{align*}
Since $v_1\in H^2$ and $\psi\in H^\infty$, we have $(z-\zeta_i)^{m_i}v_1\psi\in(z-\zeta_i)^{m_i}H^2\subset\mathcal{M}(\overline{a_i})$. So it remains to prove that the function 
\[
h:= \sum_{k=\ell_i}^{m_i-1}\frac{v^{(k)}(\zeta_i)}{k!}(z-\zeta_i)^k\psi
\]
belongs to $\mathcal{M}(\overline{a_i})$. But, using that $\mathcal M(\overline{a_{i,\ell_i}})=a_{i,\ell_i} H^2\oplus \P_{m_i-\ell_i-1}$, we can write $\psi=(z-\zeta_i)^{m_i-\ell_i}\psi_1+q,$ where $\psi_1\in H^2$ and $q\in\P_{m_i-\ell_i-1}$. 
Hence,
\[h(z)=\underbrace{\sum_{k=\ell_i}^{m_i-1}\frac{v^{(k)}(\zeta_i)}{k!}(z-\zeta_i)^{m_i+k-\ell_i}\psi_1(z)}_{h_1(z)}+\underbrace{q(z)\sum_{k=\ell_i}^{m_i-1}\frac{v^{(k)}(\zeta_i)}{k!}(z-\zeta_i)^k}_{h_2(z)}. \] 
Since $h_2$ is a polynomial, $h_2\in\mathcal{M}(\overline{a_i})$ and 
\begin{align*}
h_1(z)&= (z-\zeta_i)^{m_i}\sum_{k=\ell_i}^{m_i-1}\frac{v^{(k)}(\zeta_i)}{k!}(z-\zeta_i)^{k-\ell_i}\psi_1(z)=  (z-\zeta_i)^{m_i}\psi_2(z),~\text{with}~ \psi_2\in H^2.
\end{align*}
Thus, $h_1\in (z-\zeta_i)^{m_i}H^2\subset \mathcal{M}(\overline{a_i}).$ Finally, we deduce that $h\in \mathcal{M}(\overline{a_i})$, which ends the proof of \Cref{Lemma4}.
 \end{proof}
 
We are now ready to prove \Cref{thm5}.
\begin{proof}[Proof of \Cref{thm5}]
Fix $1\leq i\leq n.$ Using \Cref{thm-decomposition-spaces-Mabar} and \eqref{eq:Hb-coincide-avecMabar}, it is sufficient to prove that 
\begin{equation}\label{eq:u-phi-k}
    u\phi^k\in \mathcal{M}(\overline{ {a_i}}),~\text{for every}~k\ge1.
\end{equation} 
First note that if $u^{(\ell)}(\zeta_i)=0$, for all $0\leq \ell\leq m_i-1,$ then \eqref{eq:description-Ma-avec-derivee} implies that $u\in \mathcal{M}({a _i})$. Since $\phi^k\in H^\infty$, we then deduce that $u\phi^k\in \mathcal{M}(a_i)\subset \mathcal{M}(\overline{a_i})$ for all $k\geq 1$. So we may assume that there is $0\leq \ell_i\leq m_i-1$ such that 
$u(\zeta_i)= u^\prime(\zeta_i)=\cdots=u^{(\ell_i-1)}(\zeta_i)=0$ and $u^{(\ell_i )}(\zeta_i)\not=0$. Then \Cref{Lemma:phi-in-M-ai-li} implies that $\phi\in  \mathcal{M}(\overline{ a_{i,\ell_i}})\cap H^\infty$. Recall that $\mathcal{M}(\overline{ a_{i,\ell_i}})\cap H^\infty=\mathfrak{M}(\mathcal M(\overline{a_{i,\ell_i}}))$ (see \eqref{eq:multiplier3434}), whence $\mathcal{M}(\overline{ a_{i,\ell_i}})\cap H^\infty$ is an algebra. Hence, for every $k\geq 1$, we have $\varphi^k\in \mathcal{M}(\overline{ a_{i,\ell_i}})\cap H^\infty$. It now follows from \Cref{Lemma4} that, for every $k\geq 1$, we have $u\varphi^k\in\mathcal M(\overline{a_i})$, which proves \eqref{eq:u-phi-k} and thus \Cref{thm5}.   
 \end{proof}

%
\begin{remark}\label{remark6}
Let $1\leq \ell_i\leq m_i$ and let $u\in\mathcal M(\overline{a_i})$ which satisfy $u^{(\ell)}(\zeta_i)=0$ for every $0\leq \ell\leq \ell_{i}-1$. Let $f \in H^\infty$ be such that $v:=u\cdot f\in \mathcal M(\overline{a_i})$. Then $v$ and its derivatives up to order $\ell_i-1$ have non-tangential limits at $\zeta_i$ which satisfy $v^{(\ell)}(\zeta_i)=0$ for every $0\leq \ell\leq \ell_i-1$. 
\end{remark}
Indeed, the fact that $v$ and its derivatives up to order $\ell_i-1$ have non-tangential limits at $\zeta_i$ follows immediately from the fact that $v\in\mathcal M(\overline{a_i})$. Observe now that, by \eqref{eq11ZEE:inclusionMabar}, we have $u\in\mathcal M(\overline{a_i})\subset\mathcal M(\overline{(z-\zeta_i)^{\ell_i}})$. Taking account of the hypothesis and \eqref{eq:description-Ma-avec-derivee}, we deduce that $u(z)=(z-\zeta_i)^{\ell_i}u_1(z)$, where $u_1\in H^2$. Since $f\in H^\infty$, we thus get that $v(z)=(z-\zeta_i)^{\ell_i}v_1(z)$, where $v_1\in H^2$. Using one more time \eqref{eq:description-Ma-avec-derivee}, we then conclude that $v^{(\ell)}(\zeta_i)=0$ for every $0\leq \ell\leq \ell_i-1$. 

\begin{corollary}\label{cor:7}
Let $u$ and $\varphi$ satisfy \ref{H1} and \ref{H2}. Then for any $\lambda_j\in\mathbb{D}$ and $m_j\in\mathbb N$, $1\leq j\leq n$, and  for any polynomial $p$,  the function 
\begin{align*}
h:=\frac{u\cdot p\circ \phi}{~\prod_{j=1}^n(1-\overline{\lambda_j}\phi)^{m_j}~}
\end{align*}
belongs to $\HH(b)$.
 \end{corollary}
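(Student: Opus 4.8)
The plan is to reduce the statement to Theorem~\ref{thm5} by absorbing the denominator into a modified polynomial evaluated at $\phi$, using the multiplier structure established earlier. The key observation is that the function $g := \prod_{j=1}^n (1-\overline{\lambda_j} z)^{m_j}$ is a polynomial with no zeros in $\overline{\mathbb D}$, since each $\lambda_j \in \mathbb D$ forces $1/\overline{\lambda_j} \notin \overline{\mathbb D}$ (or $g$ is constant if some $\lambda_j = 0$). Hence $1/g$ is analytic on a neighbourhood of $\overline{\mathbb D}$, and in particular $1/g \in H^\infty$ with all derivatives bounded there.

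First I would treat the ``generic'' contribution. Write $h = u \cdot (p \circ \phi) \cdot (1/g)\circ \phi$, where the composition $(1/g)\circ\phi$ makes sense since $g$ does not vanish on $\overline{\mathbb D} \supset \phi(\mathbb D)$, and moreover $(1/g)\circ\phi \in H^\infty$. The natural route is to fix $1 \leq i \leq n$ and show $h \in \mathcal M(\overline{a_i})$, then invoke \Cref{thm-decomposition-spaces-Mabar} together with \eqref{eq:Hb-coincide-avecMabar}. As in the proof of \Cref{thm5}, if $u^{(\ell)}(\zeta_i) = 0$ for all $0 \leq \ell \leq m_i - 1$, then $u \in \mathcal M(a_i)$ by \eqref{eq:description-Ma-avec-derivee}, and since $(p\circ\phi)\cdot(1/g)\circ\phi \in H^\infty$ we immediately get $h \in \mathcal M(a_i) \subset \mathcal M(\overline{a_i})$. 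Otherwise, let $\ell_i$ be the order of vanishing of $u$ at $\zeta_i$; then \Cref{Lemma:phi-in-M-ai-li} gives $\phi \in \mathcal M(\overline{a_{i,\ell_i}}) \cap H^\infty = \mathfrak M(\mathcal M(\overline{a_{i,\ell_i}}))$, which is an algebra by \eqref{eq:multiplier3434}.

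The crux is then to see that $p\circ\phi$ and $(1/g)\circ\phi$ both lie in this algebra $\mathcal M(\overline{a_{i,\ell_i}}) \cap H^\infty$. For $p\circ\phi$ this is exactly the argument inside the proof of \Cref{thm5}: $\phi^k \in \mathcal M(\overline{a_{i,\ell_i}})\cap H^\infty$ for every $k$, hence so is any polynomial in $\phi$. For $(1/g)\circ\phi$ the point is that $1/g$ is analytic on $\overline{\mathbb D}$, hence a uniform limit of polynomials on $\overline{\mathbb D}$ together with all derivatives up to order $m_i - \ell_i - 1$; since $\mathcal M(\overline{a_{i,\ell_i}})$ is, by \eqref{eq:decomposition-Hb}, the set of $H^2$ functions whose Taylor part at $\zeta_i$ is controlled, and convergence there is governed by the values of the function and its first $m_i-\ell_i-1$ derivatives at $\zeta_i$ plus an $H^2$ remainder, the sequence $(p_k \circ \phi)$ for polynomials $p_k \to 1/g$ (in the $C^{m_i-\ell_i-1}(\overline{\mathbb D})$ sense) converges in $\mathcal M(\overline{a_{i,\ell_i}})$; alternatively, and more cleanly, one observes directly that $(1/g)\circ\phi = r(\phi)$ for a function $r$ holomorphic on $\overline{\mathbb D}$, writes the finite Taylor expansion of $r$ at $\phi(\zeta_i)$ and uses \eqref{eq:induction-phi-limite-radiale23344}-type decompositions of $\phi - \phi(\zeta_i)$ to conclude $(1/g)\circ\phi \in \mathcal M(\overline{a_{i,\ell_i}})$, with the remainder absorbed into an $H^2$ term because $r$ is bounded. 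In either case, the product $(p\circ\phi)\cdot(1/g)\circ\phi$ lies in the algebra $\mathcal M(\overline{a_{i,\ell_i}})\cap H^\infty$.

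Once this is in hand, \Cref{Lemma4}, applied with $v = u$ (which vanishes to order $\ell_i$ at $\zeta_i$) and $\psi = (p\circ\phi)\cdot(1/g)\circ\phi \in \mathcal M(\overline{a_{i,\ell_i}})\cap H^\infty$, yields $h = \psi u \in \mathcal M(\overline{a_i})$. Since $i$ was arbitrary, $h \in \bigcap_{i=1}^n \mathcal M(\overline{a_i}) = \mathcal M(\overline{a}) = \HH(b)$ by \Cref{thm-decomposition-spaces-Mabar} and \eqref{eq:Hb-coincide-avecMabar}. The main obstacle is the step verifying $(1/g)\circ\phi \in \mathcal M(\overline{a_{i,\ell_i}})\cap H^\infty$: one must be careful that $g$ really has no zeros on $\overline{\mathbb D}$ (which is where $\lambda_j \in \mathbb D$ is used crucially) so that $1/g$ extends holomorphically past the boundary, and then package the Taylor-expansion-plus-$H^2$-remainder argument cleanly — this is essentially a ``holomorphic functional calculus in $\phi$'' statement for the algebra $\mathcal M(\overline{a_{i,\ell_i}})\cap H^\infty$, and it is the one place where a genuinely new (if routine) computation beyond the quoted lemmas is needed.
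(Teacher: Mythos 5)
Your overall architecture coincides with the paper's: fix $1\le i\le n$, reduce to showing $h\in\mathcal M(\overline{a_i})$ via \Cref{thm-decomposition-spaces-Mabar} and \eqref{eq:Hb-coincide-avecMabar}, dispose of the case where $u$ vanishes to full order $m_i$ at $\zeta_i$ using \eqref{eq:description-Ma-avec-derivee}, and otherwise combine \Cref{Lemma:phi-in-M-ai-li} with \Cref{Lemma4}. The only structural difference is harmless: you apply \Cref{Lemma4} with $v=u$ and $\psi=(p\circ\phi)\cdot(1/g)\circ\phi$, whereas the paper takes $v=u\cdot p\circ\phi$ (which is in $\mathcal M(\overline{a_i})$ by \Cref{thm5} and vanishes to order $\ell_i$ at $\zeta_i$ by \Cref{remark6}) and $\psi=(1/g)\circ\phi$. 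However, the step you explicitly flag as "the one place where a genuinely new computation is needed" --- namely that $\prod_{j}(1-\overline{\lambda_j}\phi)^{-m_j}\in\mathcal M(\overline{a_{i,\ell_i}})\cap H^\infty$ --- is precisely where the paper does \emph{not} compute: it invokes Corollary 2.2 of \cite{alhajj2022composition}, which states that $1/(1-\overline{\lambda}\phi)$ is a multiplier of $\mathcal M(\overline{a_{i,\ell_i}})$ whenever $\phi$ is a bounded element of that space and $\lambda\in\mathbb D$, and then uses that $\mathfrak M(\mathcal M(\overline{a_{i,\ell_i}}))$ is an algebra. Your proof is incomplete exactly at this point, and neither of your two sketched substitutes closes it as written.

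Concretely, with $M=m_i-\ell_i$: in the approximation route, to pass from $C^{M-1}(\overline{\mathbb D})$-convergence of polynomials $p_k\to 1/g$ to convergence of $p_k\circ\phi$ in $\mathcal M(\overline{a_{i,\ell_i}})$ you need a uniform bound of the form $\|q\circ\phi\|_{\mathcal M(\overline{a_{i,\ell_i}})}\le C\|q\|_{C^{M-1}(\overline{\mathbb D})}$ over polynomials $q$, which you do not establish (and which cannot come from naive power-series estimates, since the multiplier norm of $\phi$ is not controlled by $\|\phi\|_\infty$). In the Taylor route, the remainder is $(\phi-\phi(\zeta_i))^{M}s(\phi)$ with $s$ bounded, and your claim that it is ``absorbed into an $H^2$ term because $r$ is bounded'' is not a justification: dividing by $(z-\zeta_i)^{M}$ produces the $M$-th power of a function that is only known to lie in $H^2$, and such a power need not lie in $H^2$. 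The correct repair is to observe that $\phi-\phi(\zeta_i)$ is a \emph{multiplier} of $\mathcal M(\overline{a_{i,\ell_i}})$, hence so is its $M$-th power, and that all derivatives of $(\phi-\phi(\zeta_i))^{M}$ up to order $M-1$ vanish at $\zeta_i$ (Leibniz: every term retains an undifferentiated factor $\phi-\phi(\zeta_i)$), so that by \eqref{eq:partie-polynomial-interpolation} its polynomial part is zero and it lies in $(z-\zeta_i)^{M}H^2$; only then does boundedness of $s(\phi)$ finish the remainder. Either supply this extra argument or, as the paper does, cite the corona-type multiplier result for $1/(1-\overline{\lambda}\phi)$ and use the algebra structure.
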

 
\begin{proof}
Fix $1\leq i\leq n$. According to \Cref{thm-decomposition-spaces-Mabar} and \eqref{eq:Hb-coincide-avecMabar}, we should prove that $h\in \mathcal{M}(\overline{ {a_i}})$. Observe that 
\[
\ \frac{ p\circ \phi}{\prod_{j=1}^n(1-\overline{\lambda_j}\phi)^{m_j}}\in H^\infty, 
\] 
and if $u(\zeta_i)= u^\prime(\zeta_i)=\cdots=u^{(m_i-1)}(\zeta_i)=0$
then, by \eqref{eq:description-Ma-avec-derivee}, $u\in\mathcal{M}(a_i)$. Hence, $h\in\mathcal M(a_i)\subset\mathcal M(\overline{a_i})$. 

So, we may assume that there is $0\leq \ell_i\leq m_i-1$ such that  \[
u(\zeta_i)= u^\prime(\zeta_i)=\cdots=u^{(\ell_i-1)}(\zeta_i)=0\text{ and }u^{(\ell_i)}(\zeta_i)\ne 0.
\] 
Let $v=u\cdot p\circ \phi$. According to  \Cref{thm5}, we know that $v\in\mathcal{M}(\overline{a_i})$, and \Cref{remark6} implies that $v(\zeta_i)= v^\prime(\zeta_i)=\cdots=v^{(\ell_i-1)}(\zeta_i)=0$. Let us now justify that \[\psi:=\frac{1}{\prod_{j=1}^n(1-\overline{\lambda_j}\phi)^{m_j}}\in\mathcal{M}(\overline{a_{i,\ell_i}}).\] 
It follows from \Cref{Lemma:phi-in-M-ai-li} that $\phi\in\mathcal{M}(\overline{a_{i,\ell_i}})\cap H^\infty $. Applying now \cite{alhajj2022composition}*{Corollary 2.2}
 gives 
\[\frac{ 1}{1-\overline{\lambda_j}\phi}\in \mathfrak{M}( \mathcal{M}(\overline{ {a_{i,\ell_i}}})),\qquad \text{for every }1\leq j\leq n.\]
Since $\mathfrak{M}( \mathcal{M}(\overline{ {a_{i,\ell_i}}})) $ is an algebra, we deduce that $\psi\in \mathfrak{M}( \mathcal{M}(\overline{ {a_{i,\ell_i}}})).$
In particular, we obtain that $\psi\in   \mathcal{M}(\overline{ {a_{i,\ell_i}}} )\cap H^\infty$. We can now apply  \Cref{Lemma4}  to conclude that $h=\psi v\in \mathcal{M}(\overline{ {a_i}})$, which proves \Cref{cor:7}.
\end{proof}
\section{Restriction on non-tangential limits}\label{section:limites}

The goal of this section is to prove that, when one of the assertion in \Cref{thm10} is satisfied, the following condition  holds:
\begin{enumerate}[(NC)]
\item\label{H3} For any $1\leq i\leq n$, we have either $u^{(\ell)}(\zeta_i)=0$ for all $0\leq \ell\leq m_i-1$, or there exists an $\ell$ with $0\leq \ell\leq m_i-1$ such that $u^{(\ell)}(\zeta_i)\not=0$ and then $\varphi$ has a non-tangential limit at $\zeta_i$ that satisfies $\phi(\zeta_i)\in \mathbb{D}\cup Z({a})$.
\end{enumerate}
More specifically, in \Cref{Subsec41}, we prove the necessity of \ref{H3} under the boundedness of $\W$ on $\HH(b)$ and in \Cref{Subsec42}, we prove that \ref{H3} is automatic when $w\in H^2$.
\subsection{Necessity of \ref{H3} under the boundedness of $\W$}\label{Subsec41}

\begin{theorem}\label{lemma8}
Assume that the operator $\W$ is bounded on $\HH(b)$. Then \ref{H3} is satisfied.
\end{theorem}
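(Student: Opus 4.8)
The plan is to argue by contradiction: suppose $\W$ is bounded on $\HH(b)$ but \ref{H3} fails at some index $i$. This means that for some $1\leq i\leq n$ there is an $\ell$ with $0\leq \ell\leq m_i-1$ and $u^{(\ell)}(\zeta_i)\neq 0$, yet $\varphi$ either has no non-tangential limit at $\zeta_i$, or has a non-tangential limit $\varphi(\zeta_i)\in\mathbb{T}\setminus Z(a)$ (equivalently, $|\varphi(\zeta_i)|=1$ and $a(\varphi(\zeta_i))\neq 0$). Let $\ell_i$ be the smallest such $\ell$, so that $u(\zeta_i)=\cdots=u^{(\ell_i-1)}(\zeta_i)=0$ and $u^{(\ell_i)}(\zeta_i)\neq 0$; by \Cref{Lemma:phi-in-M-ai-li}, the hypothesis \ref{H1}, \ref{H2} already forces $\varphi$ to have a non-tangential limit at $\zeta_i$, so we are necessarily in the second case: $\varphi(\zeta_i)=:\omega$ exists with $|\omega|=1$ and $a(\omega)\neq 0$.

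The key step is then to exhibit a test function $f\in\HH(b)$ such that $u\cdot f\circ\varphi\notin\HH(b)$, contradicting boundedness. Since $\HH(b)=\mathcal M(\overline a)=\prod_{j}(z-\zeta_j)^{m_j}H^2\oplus\P_{N-1}$, membership in $\HH(b)$ is, by \eqref{eq:description-Ma-avec-derivee} and \Cref{thm-decomposition-spaces-Mabar}, an $H^2$-condition together with the existence of non-tangential limits of $f$ and its first $m_j-1$ derivatives at each $\zeta_j$. The natural candidate is to pick $f\in\HH(b)$ which has a pole-like singularity "at $\omega$" of high enough order — concretely $f=\dfrac{(z-\zeta_i)^{m_i}}{(z-\omega)^{M}}$ for a suitable large $M$ (times a fixed polynomial to kill the behavior at the other $\zeta_j$, or simply noting $\omega\neq\zeta_j$ for all $j$ since $a(\omega)\neq 0$). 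This $f$ lies in $(z-\zeta_i)^{m_i}H^2\subset\HH(b)$ because $\omega\in\mathbb{T}$ keeps the denominator bounded away from $0$ inside $\mathbb{D}$. Then $f\circ\varphi$ blows up near $\zeta_i$ like $(\varphi-\omega)^{-M}$, and because $\varphi(\zeta_i)=\omega$ the quotient $1/(\varphi-\omega)$ fails to be in $\mathcal M(\overline{a_{i,\ell_i}})$ or $H^\infty$ — indeed, one should quantify the rate at which $\varphi(z)\to\omega$ as $z\to\zeta_i$ non-tangentially and choose $M$ so large that $u\cdot f\circ\varphi$ cannot have the required non-tangential limits of its derivatives up to order $\ell_i$ (the order $\ell_i$ being exactly where $u$ stops vanishing). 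Since $u^{(\ell_i)}(\zeta_i)\neq 0$, the singularity of $f\circ\varphi$ is not absorbed, so $u\cdot f\circ\varphi\notin\mathcal M(\overline{a_i})$, hence not in $\HH(b)$.

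The main obstacle I anticipate is making the singularity argument quantitative without assuming more than the mere existence of the non-tangential limit $\varphi(\zeta_i)=\omega$: one needs an estimate of the form $|\varphi(z)-\omega|\leq C\,|z-\zeta_i|^{\gamma}$ (or at least that $1/(\varphi-\omega)$ grows faster than any power of $1/(z-\zeta_i)$ can compensate) on a Stolz angle at $\zeta_i$, or else a soft functional-analytic substitute. A clean way around this is to use the reproducing kernel test instead: apply the (assumed bounded) adjoint $\W^{*}$ to the reproducing kernels $k_\lambda^b$ of $\HH(b)$ and track $\|\W^{*}k_\lambda^b\|_b$ as $\lambda\to\omega$; since $u(\zeta_i)$-type data controls the leading term and $\varphi(\zeta_i)=\omega\in\mathbb T$, the norm $\|k_\lambda^b\|_b$ stays comparable to $\|k_\lambda\|_{H^2}$ near $\omega$ while $\|\W^{*}k_\lambda^b\|_b$ is forced to blow up faster, contradicting boundedness of $\W^{*}$. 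Either route reduces the problem to a local computation near $\zeta_i$ and $\omega$; the polynomial part $\P_{N-1}$ contributes only lower-order terms and can be discarded at the end.
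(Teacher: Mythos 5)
Your reduction to the single remaining case is correct and matches the paper: boundedness gives \ref{H1}--\ref{H2}, \Cref{Lemma:phi-in-M-ai-li} gives the existence of the non-tangential limit $\omega=\varphi(\zeta_i)$, and the whole content is to rule out $|\omega|=1$ with $a(\omega)\neq 0$. But both of your routes for that last step have genuine gaps.

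First, your test function $f=(z-\zeta_i)^{m_i}/(z-\omega)^{M}$ does not lie in $\HH(b)$. You assert that ``$\omega\in\mathbb T$ keeps the denominator bounded away from $0$ inside $\mathbb D$,'' but the opposite is true: $\omega$ is a boundary point of $\mathbb D$, so $|z-\omega|\to 0$ as $z\to\omega$ from inside, and $1/(z-\omega)^{M}\notin H^{2}$ for any $M\geq 1$ (the numerator does not help since $a(\omega)\neq0$ forces $\omega\neq\zeta_j$ for all $j$). So there is no admissible test function of this form, and the quantitative difficulty you flag (no control on the rate at which $\varphi(z)\to\omega$) is not a technicality to be patched but the actual obstruction to this approach. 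Second, the reproducing-kernel fallback only works when $\ell_i=0$: the adjoint identity gives $|u(\lambda)|^{2}\,\|k^{b}_{\varphi(\lambda)}\|_{b}^{2}\leq C\|k^{b}_{\lambda}\|_{b}^{2}$, and $\|k^{b}_{\lambda}\|_b$ indeed stays bounded as $\lambda=r\zeta_i\to\zeta_i$ while $\|k^{b}_{\varphi(r\zeta_i)}\|_{b}^{2}\gtrsim |a(\omega)|^{2}(1-|\varphi(r\zeta_i)|^{2})^{-1}$ blows up; but when $\ell_i\geq1$ one has $|u(r\zeta_i)|^{2}\asymp(1-r)^{2\ell_i}$, and Julia's inequality $1-|\varphi(r\zeta_i)|\gtrsim 1-r$ shows the left-hand side then tends to $0$, so no contradiction is obtained.

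The paper's proof closes exactly this gap with a soft device you are missing: since $|\varphi(r\zeta_i)|\to1$, one can choose (by \cite{alhajj2022composition}*{Lemma 3.4}) radii $r_k\to1^-$ such that $(\varphi(r_k\zeta_i))_k$ is an interpolating sequence for $H^\infty$, and then pick $f\in H^\infty$ with $f(\varphi(r_k\zeta_i))$ alternating between $1$ and $0$. Testing $\W$ on $af\in aH^\infty\subset\HH(b)$, the image $\psi=u\cdot a\circ\varphi\cdot f\circ\varphi$ must lie in $\HH(b)$, hence $\psi^{(\ell_i)}$ must have a non-tangential limit at $\zeta_i$; but $\psi(r_k\zeta_i)/(r_k\zeta_i-\zeta_i)^{\ell_i}$ oscillates between $0$ and (in the limit) $u^{(\ell_i)}(\zeta_i)\,a(\omega)$, which forces $a(\omega)=0$. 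This replaces any rate estimate on $\varphi(z)-\omega$ by an oscillation argument, and it is the step your proposal does not supply.
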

\begin{proof}
Since $W_{u,\phi}$ is bounded on $\HH(b)$, we know that $u$ and $\varphi$ satisfy \ref{H1} and \ref{H2}. 

Let us now consider $0\leq \ell_i\leq m_i-1$ satisfying
\[
u(\zeta_i) =u^\prime(\zeta_i)=\cdots=u^{(\ell_i-1)}(\zeta_i)=0\text{ and } u^{(\ell_i)}(\zeta_i)\not=0.
\] 
It follows from \Cref{Lemma:phi-in-M-ai-li} that $\varphi$ has a non-tangential limit at $\zeta_i$ satisfying $|\varphi(\zeta_i)|\leq 1$. It remains to prove that if $|\phi(\zeta_i)|=1$, then necessarily $\phi(\zeta_i)\in Z({a})$.
Using \cite{alhajj2022composition}*{Lemma 3.4}, there exists $r_k\to1^-$ such that the sequence $\left( \phi(r_k\zeta_i)\right)_k$ is an interpolating sequence for $H^\infty$.
In particular, there exists $f\in H^\infty$ such that \[f(\phi(r_k\zeta_i))=\begin{cases}
    1&\text{if $k$ is even},\\
    0 &\text{if $k$ is odd}.
\end{cases}\]
Since ${a}H^\infty\subset {a}H^2\subset\HH(b)$, the function $af$ belongs to $\HH(b)$, and therefore the function $\psi=W_{u,\phi}(af)=u\cdot {a}\circ\phi\cdot f\circ \phi$ does as well. In particular, its derivatives up to order $m_i-1$ have non-tangential limits at $\zeta_i$, and according to \Cref{remark6}, we see that $\psi^{(k)}(\zeta_i)=0$ for every $0\leq k\leq\ell_i-1$. So, we have
\[
\frac{\psi^{(\ell_i)} (\zeta_i)}{\ell_i !}=\lim_{k\to+\infty}\frac{\psi(r_k\zeta_i)}{(r_k\zeta_i-\zeta_i)^{\ell_i}}.
\]
But observe that 
\begin{align*}
    \frac{\psi(r_k\zeta_i)}{(r_k\zeta_i-\zeta_i)^{\ell_i}}= \frac{u(r_k\zeta_i)}{(r_k\zeta_i-\zeta_i)^{\ell_i}}\cdot {a}(\phi(r_k\zeta_i))\cdot f(\phi(r_k\zeta_i))=\begin{cases}
    \frac{u(r_k\zeta_i)}{(r_k\zeta_i-\zeta_i)^{\ell_i}}\cdot {a}(\phi(r_k\zeta_i)) &\text{if $k$ is even},\\
    0&\text{if $k$ is odd,}
    \end{cases}
\end{align*}
and 
\begin{align*}
\lim_{k\to\infty} \frac{u(r_k\zeta_i)}{(r_k\zeta_i-\zeta_i)^{\ell_i}}\cdot{a}(\phi(r_k\zeta_i))= u^{(\ell_i)}(\zeta_i)\cdot{a}(\phi(\zeta_i)).
\end{align*}
In order for $\psi^{(\ell_i)}(\zeta_i)$ to exist, we must necessarily have $ u^{(\ell_i)}(\zeta_i){a}(\phi(\zeta_i))=0.$ Since the term $ u^{(\ell_i)}(\zeta_i)$ is not zero, this implies that  ${a}(\phi(\zeta_i))=0$, which concludes the proof of \Cref{lemma8}.
\end{proof}

\subsection{Necessity of \ref{H3} when $w\in H^2$}\label{Subsec42}

\begin{theorem}\label{thm:H3necwH2}
Let $u$ and $\phi$ satisfy \ref{H1} and \ref{H2}, let $w$ be defined by \eqref{eq:defining-w}. Assume that $w\in H^2$. Then \ref{H3} is satisfied.
\end{theorem}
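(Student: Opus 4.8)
The statement to prove is \Cref{thm:H3necwH2}: assuming \ref{H1}, \ref{H2}, and that the weight $w$ from \eqref{eq:defining-w} lies in $H^2$, the non-tangential restriction \ref{H3} holds. Let me think about what the weight $w$ encodes.

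We have $w = \dfrac{u \cdot a\circ\phi \cdot \prod_{j=p+1}^n (\phi - \phi(\zeta_j))^{m_j}}{a}$, where $\{\zeta_{p+1},\dots,\zeta_n\}$ are exactly the points $\zeta_i$ where some derivative $u^{(\ell)}(\zeta_i)$ (with $\ell \le m_i - 1$) is nonzero AND $\phi(\zeta_i) \in \mathbb{D}$.

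Fix $1 \le i \le n$. If all derivatives $u^{(\ell)}(\zeta_i) = 0$ for $0 \le \ell \le m_i - 1$, there is nothing to check (first alternative of \ref{H3} holds). So assume there is $\ell$ with $u^{(\ell)}(\zeta_i) \ne 0$, and let $\ell_i$ be the smallest such $\ell$, so $u(\zeta_i) = \cdots = u^{(\ell_i - 1)}(\zeta_i) = 0$, $u^{(\ell_i)}(\zeta_i) \ne 0$, and $0 \le \ell_i \le m_i - 1$.

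By \Cref{Lemma:phi-in-M-ai-li}, $\phi$ already has a non-tangential limit $\phi(\zeta_i)$ with $|\phi(\zeta_i)| \le 1$. So the only thing to rule out is: $|\phi(\zeta_i)| = 1$ and $\phi(\zeta_i) \notin Z(a)$, i.e. $a(\phi(\zeta_i)) \ne 0$. Suppose, for contradiction, this happens. By definition of the index set, since $|\phi(\zeta_i)| = 1 \notin \mathbb{D}$, the factor $i$ does NOT appear among $\{p+1,\dots,n\}$; so none of the finitely many factors $(\phi - \phi(\zeta_j))^{m_j}$, $j \ge p+1$, vanishes or is problematic at $\zeta_i$ in a way that helps — in fact $\phi(\zeta_j) \in \mathbb{D}$ for those $j$, so $(\phi(\zeta_i) - \phi(\zeta_j))^{m_j} \ne 0$ since $|\phi(\zeta_i)| = 1$. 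Hence near $\zeta_i$, the numerator of $w$ behaves like $u \cdot a\circ\phi$ times a nonvanishing (near $\zeta_i$) factor, and $w \in H^2$ forces control on $\dfrac{u \cdot a\circ\phi}{a}$ near $\zeta_i$.

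The plan, then: write $a(z) = (z - \zeta_i)^{m_i} \tilde a_i(z)$ with $\tilde a_i(\zeta_i) \ne 0$, and use \eqref{u-representation}-type expansion $u(z) = (z-\zeta_i)^{\ell_i}\big(\frac{u^{(\ell_i)}(\zeta_i)}{\ell_i!} + o(1)\big)$ near $\zeta_i$. Then along the radius $z = r\zeta_i$, $r \to 1^-$,
\[
|w(r\zeta_i)| \sim \frac{|u(r\zeta_i)| \cdot |a(\phi(r\zeta_i))| \cdot C}{|r\zeta_i - \zeta_i|^{m_i}} \sim \frac{C'|a(\phi(\zeta_i))|}{|r\zeta_i - \zeta_i|^{m_i - \ell_i}},
\]
where $C' = |u^{(\ell_i)}(\zeta_i)/\ell_i!| \cdot C > 0$ and $C$ collects the (bounded, nonvanishing) contribution of $\prod_{j=p+1}^n (\phi - \phi(\zeta_j))^{m_j}$ and of $\tilde a_i^{-1}$ near $\zeta_i$. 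Since $m_i - \ell_i \ge 1$ and $a(\phi(\zeta_i)) \ne 0$, this blows up like $(1-r)^{-(m_i - \ell_i)}$, which is not $O((1-r)^{-1/2})$, hence $w$ cannot be in $H^2$ (an $H^2$ function $g$ satisfies $|g(r\zeta)| = o((1-r)^{-1/2})$ along almost every radius, and more to the point, a function whose radial growth at a fixed point exceeds $(1-r)^{-1/2+\epsilon}$ fails to lie in any space where $\mathcal M(\overline{a_i})$-type membership holds; the cleanest route is: $w \in H^2 \subset \mathcal M(\overline{(z-\zeta_i)})$... ). Actually the cleanest contradiction: if $w \in H^2$ then $w$ has a finite non-tangential limit at $\zeta_i$ (not guaranteed for arbitrary $H^2$ functions!). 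Hmm, that's false in general. Let me reconsider — I should instead argue directly that the radial limsup of $|w(r\zeta_i)|$ is finite for $H^2$ functions along SOME sequence, which contradicts the genuine blow-up rate $(1-r)^{-(m_i-\ell_i)}$.

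Better: use that $w \in H^2$ implies $\int_{\mathbb T}|w|^2 \, dm < \infty$, so $w$ is finite a.e. on $\mathbb T$, but that doesn't control the specific point $\zeta_i$. The right tool: $H^2$ functions satisfy $|w(z)| \le \|w\|_2 \|k_z\|_2 = \|w\|_2 (1-|z|^2)^{-1/2}$, so $|w(r\zeta_i)| = O((1-r)^{-1/2})$. But we showed $|w(r\zeta_i)| \gtrsim (1-r)^{-(m_i - \ell_i)}$ with $m_i - \ell_i \ge 1 > 1/2$. Contradiction. That's it — clean.

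So the key steps, in order: (1) reduce to a fixed $i$ with $u^{(\ell_i)}(\zeta_i) \ne 0$, $\ell_i$ minimal; (2) invoke \Cref{Lemma:phi-in-M-ai-li} to get the non-tangential limit $\phi(\zeta_i)$ with $|\phi(\zeta_i)| \le 1$, reducing to excluding $|\phi(\zeta_i)| = 1$ with $a(\phi(\zeta_i)) \ne 0$; (3) observe that in this case $i \notin \{p+1,\dots,n\}$ and the product factor $\prod_{j=p+1}^n(\phi(r\zeta_i) - \phi(\zeta_j))^{m_j}$ tends to a nonzero limit; (4) compute the radial growth of $|w(r\zeta_i)|$ using the Taylor expansions of $u$ and $a$ at $\zeta_i$, getting $\sim c(1-r)^{-(m_i-\ell_i)}$ with $c \ne 0$; (5) contrast with the universal bound $|w(r\zeta_i)| \le \|w\|_2(1-r^2)^{-1/2}$ for $w \in H^2$; since $m_i - \ell_i \ge 1$, this is a contradiction; conclude $a(\phi(\zeta_i)) = 0$, i.e. $\phi(\zeta_i) \in Z(a)$, establishing \ref{H3}.

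The main obstacle is the bookkeeping in step (4): one must carefully track that $a\circ\phi$ contributes exactly the order $m_i - \ell_i$ reduction (because $u$ contributes a zero of order $\ell_i$ against the pole of order $m_i$ from $1/a$) while verifying $a(\phi(r\zeta_i)) \to a(\phi(\zeta_i)) \ne 0$ — which uses continuity of $a$ on $\overline{\mathbb D}$ (valid since $a$ is a polynomial) and the non-tangential (here, radial) limit of $\phi$ at $\zeta_i$ from \Cref{Lemma:phi-in-M-ai-li}. One small care: $\phi(r\zeta_i) \to \phi(\zeta_i)$ radially, and one must ensure $\phi(r\zeta_i)$ stays in $\overline{\mathbb D}$ where $a$ is continuous — which it does. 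The factor $\prod_{j=p+1}^n(\phi - \phi(\zeta_j))^{m_j}$ is handled the same way: its radial limit at $\zeta_i$ is $\prod_{j=p+1}^n(\phi(\zeta_i) - \phi(\zeta_j))^{m_j} \ne 0$ because each $\phi(\zeta_j) \in \mathbb D$ while $|\phi(\zeta_i)| = 1$. With these limits in hand, the estimate in step (4) is a one-line consequence of $u(r\zeta_i) = \frac{u^{(\ell_i)}(\zeta_i)}{\ell_i!}(r\zeta_i - \zeta_i)^{\ell_i}(1 + o(1))$ and $|r\zeta_i - \zeta_i| = 1 - r$, and step (5) finishes it.
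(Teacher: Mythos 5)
Your proof is correct and rests on the same two ingredients as the paper's: the expansion $u(z)=(z-\zeta_i)^{\ell_i}(\alpha_{\ell_i}+o(1))$ coming from $u\in\mathcal M(\overline{a_i})$, and the pointwise growth bound $O\big((1-|z|)^{-1/2}\big)$ for $H^2$ functions, combined with $m_i-\ell_i\geq 1$. The only (cosmetic) difference is packaging: the paper extracts an exact identity $\alpha_{\ell_i}\,a(\phi)\,p(\phi)=(z-\zeta_i)u_4$ with $u_4\in H^2$ and lets $z\to\zeta_i$ non-tangentially, whereas you run a contradiction by comparing the radial growth rate $(1-r)^{-(m_i-\ell_i)}$ of $w$ against the universal bound $\|w\|_2(1-r^2)^{-1/2}$.
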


\begin{proof}
Consider $0\leq \ell_i\leq m_i-1$ satisfying
\[
u(\zeta_i) =u^\prime(\zeta_i)=\cdots=u^{(\ell_i-1)}(\zeta_i)=0\text{ and } u^{(\ell_i)}(\zeta_i)\not=0.
\] 
It follows from \Cref{Lemma:phi-in-M-ai-li} that $\varphi$ has a non-tangential limit at $\zeta_i$ satisfying $|\varphi(\zeta_i)|\leq 1$. It remains to prove that $\varphi(\zeta_i)\in\mathbb D\cup Z(a)$.

Since $w\in H^2$ we know that $u\cdot a\circ\phi\cdot p\circ\phi\in aH^2$, where 
\[
p(z)=\prod_{j=p+1}^n (z-\lambda_j)^{m_j}
\]
with $\lambda_j=\phi(\zeta_j)\in\mathbb D$, $p+1\leq j\leq n$. Using \ref{H1}, we have $u\in \mathcal M(\overline{a})\subset \mathcal M(\overline{(z-\zeta_i)^{m_i}})$, and in particular, we may write 
\[
u(z)=(z-\zeta_i)^{m_i}u_1(z)+\sum_{k=\ell_i}^{m_i}\alpha_k (z-\zeta_i)^k,
\]
with $u_1\in H^2$, $\alpha_k\in\mathbb C$, $\ell_i\leq k\leq m_i$ and $\alpha_{\ell_i}=\frac{u^{(\ell_i)}(\zeta_i)}{\ell_i!}\neq 0$. This can be rewritten as 
\[
u(z)=(z-\zeta_i)^{\ell_i}(\alpha_{\ell_i}+(z-\zeta_i)u_2(z)),
\]
where $u_2\in H^2$. Hence
\[
u(z)a(\phi(z))p(\phi(z))=(z-\zeta_i)^{\ell_i}(\alpha_{\ell_i}+(z-\zeta_i)u_2(z)) a(\phi(z))p(\phi(z)).
\]
But, using $u\cdot a\circ\phi\cdot p\circ\phi\in aH^2\subset (z-\zeta_i)^{m_i}H^2$, there exists $u_3\in H^2$ such that
\[
(\alpha_{\ell_i}+(z-\zeta_i)u_2(z))a(\phi(z))p(\phi(z))=(z-\zeta_i)^{m_i-\ell_i}u_3(z).
\]
Thus, 
\begin{equation}\label{eq:qsdqsdqdqsd54343DSDS}
    \alpha_{\ell_i}a(\phi(z))p(\phi(z))=(z-\zeta_i)u_4(z),
\end{equation}
where $u_4(z)=(z-\zeta_i)^{m_i-(\ell_i+1)}u_3(z)-u_2(z)a(\phi(z))p(\phi(z))$. Since $\ell_i\leq m_i-1$, observe that $u_4\in H^2$, whence $(z-\zeta_i)u_4(z)\to 0$ as $z\to\zeta_i$ non tangentially. In particular, if we let $z\to\zeta_i$ non tangentially in \eqref{eq:qsdqsdqdqsd54343DSDS}, we get 
\[
\alpha_{\ell_i}a(\phi(\zeta_i))p(\phi(\zeta_i))=0.
\]
But $\alpha_{\ell_i}\neq 0$. Therefore, either we have $a(\phi(\zeta_i))=0$, which means that $\phi(\zeta_i)\in Z(a)$, or we have $p(\phi(\zeta_i))=0$, which means that there is a j with $p+1\leq j\leq n$ such that $\phi(\zeta_i)=\lambda_j$ and in this case $\phi(\zeta_i)\in \mathbb D$. 
\end{proof}

\section{Proof of \Cref{thm10}}\label{section:proof}
In this section, we will prove our main result. 
Recall that, under conditions \ref{H1} and \ref{H2}, the weight $w$ is given by
\[
w=\frac{u\cdot a\circ\phi\cdot\prod_{j=p+1}^n\left(\phi-\phi(\zeta_j)\right)^{m_j}}{a},
\]
where for every $p+1\leq j\leq n$, $\varphi(\zeta_j)\in\mathbb D$ and $u^{(\ell)}(\zeta_j)\neq 0$, for some $0\leq \ell\leq m_j-1$. 
The first step to prove \Cref{thm10} is to prove that this weight $w$ belongs to $H^2$ whenever $\W$ is bounded on $\HH(b)$. This is given by the following lemma.
\begin{lemma}\label{lemma9} 
Assume that  $u$ and $\varphi$ are such that $W_{u,\varphi}$ is bounded on $\HH(b)$. Then $w\in H^2$.
\end{lemma}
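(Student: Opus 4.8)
The plan is to evaluate $W_{u,\phi}$ on a well-chosen family of test functions in $\HH(b)$ and read off membership of $w$ in $H^2$ from the resulting estimate. The natural candidates are (normalized) reproducing kernels, or rather their images under $T_{\overline a}$: for $\lambda\in\mathbb D$, consider $f_\lambda = a g_\lambda$ with $g_\lambda$ the Cauchy (Szegő) kernel $k_\lambda(z)=(1-\overline\lambda z)^{-1}$, so $\|f_\lambda\|_b = \|k_\lambda\|_2$. Applying $\W$ gives $\W f_\lambda = u\cdot (a\circ\phi)\cdot (k_\lambda\circ\phi)$. The key algebraic observation is that $u\cdot(a\circ\phi) = (w\,a)/\prod_{j=p+1}^n(\phi-\phi(\zeta_j))^{m_j}$ by the very definition \eqref{eq:defining-w} of $w$, so that
\[
\W f_\lambda = \frac{w}{\prod_{j=p+1}^n(\phi-\phi(\zeta_j))^{m_j}}\cdot a\cdot (k_\lambda\circ\phi).
\]
Since $\W f_\lambda\in\HH(b)$ and $\HH(b)=\mathcal M(a)\oplus\P_{N-1}$, I would compare this with the decomposition \eqref{eq:norm-Hb-decomposition} and in particular control the $\mathcal M(a)$-component.

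The heart of the argument is to convert "$\W f_\lambda\in\HH(b)$ with norm $\le \|W\|\,\|k_\lambda\|_2$" into a statement about the $H^2$-function obtained after dividing by $a$. First I would handle the denominator: by \Cref{cor:7} (applied with the polynomial $p=1$ and the points $\lambda_j=\phi(\zeta_j)$, $p+1\le j\le n$, with multiplicities $m_j$), the function $u\cdot (a\circ\phi)/\prod_{j=p+1}^n(1-\overline{\phi(\zeta_j)}\phi)^{m_j}$ — equivalently, after the harmless unimodular adjustment replacing $1-\overline{\phi(\zeta_j)}\phi$ by $\phi-\phi(\zeta_j)$ up to a nonvanishing bounded factor — lies in $\HH(b)$; more relevantly, composing with $k_\lambda\circ\phi\in H^\infty$ and using \Cref{thm5}/\Cref{cor:7} again keeps everything in $\HH(b)=\mathcal M(\overline a)$. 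Then, writing $\W f_\lambda = a\cdot G_\lambda + p_\lambda$ with $G_\lambda\in H^2$, $p_\lambda\in\P_{N-1}$, the interpolation conditions \eqref{eq:partie-polynomial-interpolation} force $p_\lambda$ to match the boundary derivatives of $\W f_\lambda$ at the $\zeta_i$; since $u$ vanishes to appropriate order and $w\,a$ already carries the factor $a=\prod a_i$, one shows $p_\lambda = 0$ (or is controlled uniformly), so that $\W f_\lambda = a\,G_\lambda$ with $G_\lambda = w\cdot (k_\lambda\circ\phi)/\prod_{j=p+1}^n(\phi-\phi(\zeta_j))^{m_j}$ and $\|G_\lambda\|_2 = \|\W f_\lambda\|_b \le \|W\|\,\|k_\lambda\|_2$.

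Now I would let $\lambda\to 0$, or better, use a normal-families / Fatou argument: the functions $G_\lambda$ are uniformly bounded in $H^2$, and as $\lambda\to 0$ we have $k_\lambda\circ\phi\to 1$ locally uniformly on $\mathbb D$, so $G_\lambda\to w/\prod_{j=p+1}^n(\phi-\phi(\zeta_j))^{m_j}$ pointwise; a weak-compactness argument in $H^2$ then gives that this limit lies in $H^2$ with norm $\le\|W\|$. To deduce $w\in H^2$ from this, one multiplies back by the polynomial factor $\prod_{j=p+1}^n(z-\phi(\zeta_j))^{m_j}$... — but this is wrong in general since $\phi(\zeta_j)$ need not equal the argument; instead I would directly test on $f=a\,g$ with $g$ ranging over a dense subset and use $\prod(\phi-\phi(\zeta_j))^{m_j}$ as a bounded multiplier cancellation, or, most cleanly, apply $\W$ to the single function $h = a\cdot\prod_{j=p+1}^n(1-\overline{\phi(\zeta_j)}\,z)^{-m_j}$-type element guaranteed to be in $\HH(b)$ by \Cref{cor:7}, whose image is exactly $w$ times a bounded factor, yielding $w\in\HH(b)\subset H^2$ directly.

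\emph{Main obstacle.} The delicate point is the bookkeeping with the polynomial parts $p_\lambda$ and the boundary vanishing orders: one must verify that after composing with $\phi$ and dividing by $a$ nothing escapes $H^2$, i.e. that the construction of $w$ in \eqref{eq:defining-w} exactly cancels the boundary zeros of $a$ that are not already supplied by $u\cdot(a\circ\phi)$. This is precisely where \ref{H3}-type information (which by \Cref{lemma8} holds under boundedness) and \Cref{remark6} are needed: at a point $\zeta_i$ where $u^{(\ell_i)}(\zeta_i)\ne 0$, either $\phi(\zeta_i)\in Z(a)$ — so $a\circ\phi$ vanishes there to absorb the pole of $1/a$ — or $\phi(\zeta_i)\in\mathbb D$ — so the factor $(\phi-\phi(\zeta_i))^{m_i}$ in the numerator of $w$ does the job. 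Organizing this case analysis at each $\zeta_i$, uniformly in the test parameter, is the real work; the functional-analytic extraction of $w\in H^2$ from uniformly bounded images is then routine.
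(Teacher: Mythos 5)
Your proposal correctly locates the starting point: applying $\W$ to the function $a\cdot\prod_{j=p+1}^n(z-\phi(\zeta_j))^{m_j}\in aH^\infty\subset\HH(b)$ (or invoking \Cref{thm5}) yields $v:=wa=u\cdot a\circ\phi\cdot\prod_{j=p+1}^n(\phi-\phi(\zeta_j))^{m_j}\in\HH(b)$, which is exactly where the paper's proof begins. But the final sentence of your third paragraph overclaims: $v\in\HH(b)$ only gives $v=ag+p$ with $g\in H^2$ and $p\in\P_{N-1}$, hence $w=g+p/a$, and $p/a\notin H^2$ unless $p=0$. The entire content of the lemma is that this polynomial part vanishes, equivalently (by \eqref{eq:description-Ma-avec-derivee}) that $v^{(k)}(\zeta_i)=0$ for all $1\le i\le n$ and $0\le k\le m_i-1$. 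Your ``main obstacle'' paragraph names the right dichotomy at each $\zeta_i$ but does not prove either branch, and the branch $\phi(\zeta_i)\in Z(a)$ is justified incorrectly: the statement that ``$a\circ\phi$ vanishes there to absorb the pole of $1/a$'' confuses a single non-tangential limit $a(\phi(\zeta_i))=0$ with vanishing to order $m_i$ in $z$. Nothing controls the rate at which $a(\phi(z))\to 0$ as $z\to\zeta_i$ (no angular derivative of $\phi$ at $\zeta_i$ is available a priori), so this factor cannot by itself cancel the pole of order $m_i$ of $1/a$. The preliminary kernel-testing argument with $f_\lambda=ak_\lambda$ runs into the same wall, since the identity $\|G_\lambda\|_2=\|\W f_\lambda\|_b$ presupposes $p_\lambda=0$.

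The paper closes precisely this gap with an argument you do not supply. For $\phi(\zeta_i)\in Z(a)\cap\mathbb T$ it chooses $r_k\to1^-$ so that $(\phi(r_k\zeta_i))_k$ is an interpolating sequence for $H^\infty$, picks $f\in H^\infty$ alternating between $0$ and $1$ along it, applies $\W$ to $af\cdot\prod_{j=p+1}^n(z-\phi(\zeta_j))^{m_j}\in aH^\infty$, and shows by induction on $k$ that the derivative $\psi^{(k)}(\zeta_i)$ of the image can only exist if $v^{(k)}(\zeta_i)=0$. Note that this sub-case is the only place where the boundedness of $\W$ is used beyond \ref{H1} and \ref{H2} (see \Cref{rem:weigth-dansHardy}), so it cannot be dispatched by soft reasoning. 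The branch $\phi(\zeta_i)\in\mathbb D$ also needs more than you indicate: one must feed the expansion $\phi-\phi(\zeta_i)=(z-\zeta_i)(\phi'(\zeta_i)+\varepsilon(z))$ coming from \Cref{Lemma:phi-in-M-ai-li} into $v$, with a separate treatment when $\ell_i=m_i-1$. As it stands the proposal reduces the lemma to its hardest step and then asserts that step without proof.
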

\begin{proof}
According to \Cref{lemma8}, we know that $u$ and $\phi$ necessarily satisfy \ref{H1}, \ref{H2} and \ref{H3}. Then, we can apply \Cref{thm5} which gives that $u\cdot p\circ\phi\in \HH(b)$ for every polynomial $p$. In particular \[v:=u\cdot({a}\circ\phi)\cdot \prod_{j=p+1}^n\left(\phi-\phi(\zeta_j)\right)^{m_j}\in\HH(b).\]
According to \eqref{eq:description-Ma-avec-derivee}, in order to prove that $w=\frac{v}{{a}}\in H^2$, it is sufficient to prove that
\begin{equation}\label{eq:6}
    v^{(k)}(\zeta_i)=0, \quad\forall \ 1\leq i\leq n, \ \forall \ 0\leq k\leq m_i-1.
\end{equation}
Notice that, since $v\in\HH(b)\subset \mathcal{M}(\overline{a_i}), $ we can write
\begin{equation}\label{eq:sdqsdqsdsq23443DZD}
v(z)=(z-\zeta_i)^{m_i}v_1+p(z),
    \end{equation}
where $v_1\in H^2$ and $p$ is the polynomial given by
\[
p(z)=\sum_{k=0}^{m_i-1}\frac{v^{(k)}(\zeta_i)}{k!}(z-\zeta_i)^k. 
\]

\par\medskip
\noindent -- \textit{Case 1.} We first prove \eqref{eq:6} for $p+1\leq i\leq n$. Observe that 
\begin{equation}\label{eq:3434DZSDFFD3933}
v=u\cdot (\phi-\phi(\zeta_i))^{m_i}\cdot p_i\circ \phi,
\end{equation}
where $p_i$ is the polynomial given by
\[p_i(z)={a}(z)\prod_{j=p+1,j\ne i}^n(z-\phi(\zeta_j))^{m_j}.
\]
Since $p+1\leq i\leq n$, there exists $0\leq \ell_i\leq m_i-1$ such that 
\begin{equation}\label{dsds2ersdqsd882}
u(\zeta_i)=\cdots= u^{(\ell_i-1)}(\zeta_i)=0\text{  and } u^{(\ell_i)}(\zeta_i)\ne 0.
\end{equation}
According to \Cref{Lemma:phi-in-M-ai-li}, $\phi\in\mathcal{M}(\overline{a_{i,\ell_i}})$, with $a_{i,\ell_i}(z)=(z-\zeta_i)^{m_i-\ell_i}. $ Therefore, we have
\begin{equation}\label{sdsds1ZEZE9SDsdsd}
\phi(z) =a_{i,\ell_i}(z)\phi_1(z)+\sum_{k=0}^{m_i-\ell_i-1}\frac{\phi^{(k)}(\zeta_i)}{k!}(z-\zeta_i)^k, \text{ where }\phi_1\in H^2.
\end{equation}
We now consider two sub-cases.

\noindent $\bullet$ \textit{Case 1(a).} Suppose that $\ell_i\leq m_i-2$. Then $m_i-\ell_i\geq 2$, and  \Cref{sdsds1ZEZE9SDsdsd} can be written as \[\phi(z)=\phi(\zeta_i)+(z-\zeta_i)\phi^\prime(\zeta_i)+(z-\zeta_i)\varepsilon( z ),
\]
where $\varepsilon$ is a function in $H^2$ satisfying $\varepsilon(z)\to 0$ as $z\to\zeta_i$ non-tangentially. Plugging into \eqref{eq:3434DZSDFFD3933}, we get 
\[v(z)=u(z)(z-\zeta_i)^{m_i}(\phi^\prime(\zeta_i)+\varepsilon(z))^{m_i}p_i(\phi(z)).
\] 
Therefore, with \eqref{eq:sdqsdqsdsq23443DZD}, we get   
\[
    u(z)(z-\zeta_i)^{m_i}(\phi^\prime(\zeta_i)+\varepsilon(z))p_i(\phi(z))=(z-\zeta_i)^{m_i}v_1(z)+p(z).
\]
    Dividing by $(z-\zeta_i)^{m_i},$ we see that $p/(z-\zeta_i)^{m_i}\in H^1$. Since ${\deg }(p)\leq m_i-1$, we deduce that $p=0,$ which implies that $v^{(k)}(\zeta_i)=0$ for every $0\leq k\leq m_i-1.$
    \par\smallskip

\noindent $\bullet$ \textit{Case 1(b).} Suppose that $\ell_i= m_i-1$. In this case,  $a_{i,\ell_i}(z)=(z-\zeta_i)$, and by \eqref{sdsds1ZEZE9SDsdsd}, we have
$\phi(z)=(z-\zeta_i)\phi_1(z)+\phi(\zeta_i)$. Moreover, since $u\in\HH(b)\subset \mathcal M(\overline{a_i})$ and taking into account \eqref{dsds2ersdqsd882}, we also have
\[
    u(z)=(z-\zeta_i)^{m_i}u_1(z)+\alpha (z-\zeta_i)^{m_i-1},
\] 
    where $u_1\in H^2$ and $\alpha=\frac{u^{(m_i-1)}(\zeta_i)}{(m_i-1)!}\ne0.$ Then, plugging into \eqref{eq:3434DZSDFFD3933}, we deduce that
   \[
   v(z)= (z-\zeta_i)^{m_i-1}((z-\zeta_i)u_1(z)+\alpha )(z-\zeta_i)^{m_i}\phi_1^{m_i}(z)p_i(\phi(z)).
   \]
Since $(z-\zeta_i)u_1(z)\to 0$ and $(z-\zeta_i)\phi_1(z)\to 0$ as $z\to\zeta_i$ non-tangentially, and since $p_i\circ\phi\in H^\infty$, we see that $v(z)=o((z-\zeta_i)^{m_i-1})$ as $z\to\zeta_i$ non-tangentially. But, we also have that $(z-\zeta_i)^{m_i}v_1(z)=o((z-\zeta_i)^{m_i-1})$, whence \eqref{eq:sdqsdqsdsq23443DZD} implies that $p(z)=o((z-\zeta_i)^{m_i-1})$ as $z\to\zeta_i$ non-tangentially. This implies that $p=0$, and thus $v^{(k)}(\zeta_i)=0$ for every $0\leq k\leq m_i-1$. 

This concludes the proof of \eqref{eq:6} for the first case when $p+1\leq i\leq n$.
\par\medskip

\noindent -- \textit{Case 2.} We now prove \eqref{eq:6} for $1\leq i\leq p$. For this case, we also consider two sub-cases.

\noindent $\bullet$ \textit{Case 2(a).} Suppose that $u^{(\ell)}(\zeta_i)=0$ for all $0\leq \ell\leq m_i-1$. Then, according to \eqref{eq:description-Ma-avec-derivee}, we get that $u\in\mathcal M(a_i)$. Since $({a}\circ\phi)\prod_{j=p+1}^n(\phi-\phi(\zeta_j))^{m_j}\in H^\infty$, we immediately deduce that $v\in\mathcal M(a_i)$, and using once more \eqref{eq:description-Ma-avec-derivee}, we obtain that $v^{(\ell)}(\zeta_i) =0,$ for every $0\leq \ell\leq m_i-1$.\par\smallskip

\noindent $\bullet$ \textit{Case 2(b).}
Suppose that there exists  $0\leq \ell_i\leq m_i-1$  such that 
\[u(\zeta_i) =u^\prime(\zeta_i)=\cdots=u^{(\ell_i-1)}(\zeta_i)=0\text{ and } u^{(\ell_i)}(\zeta_i)\not=0.
\] 
Then, according to \ref{H3}, since $1\leq i\leq p$, we have that $\phi(\zeta_i)\in Z({a})$. To check \eqref{eq:6}, we shall use a similar argument as in the proof of \Cref{lemma8}. First, we have  \[v(\zeta_i)=u(\zeta_i) {a}(\phi (\zeta_i))\prod_{j=p+1}^n(\phi(\zeta_i)-\phi(\zeta_j))^{m_j} =0,\]  because $\phi(\zeta_i)\in Z({a}).$ Now since $|\phi(\zeta_i)|=1,$ there exists $r_k\to1^-$ such that $(\phi(r_k\zeta_i))_k$ is an interpolating sequence for $H^\infty$. In particular, there exists $f\in H^\infty$ such that \[f(\phi(r_k\zeta_i))=\begin{cases}
    1&\text{if $k$ is even},\\
    0 &\text{if $k$ is odd}.
\end{cases}\]
Define \[h(z)=   {a}(z)\prod_{j=p+1}^n(z-\phi(\zeta_j))^{m_j}f(z),\quad z\in\mathbb D.
\]  
Then $h\in aH^\infty\subset\HH(b)$, and since $\W$ is bounded on $\HH(b)$, we get that $u\cdot h\circ\phi$ belongs also to $\HH(b)$.
But \[u\cdot h\circ \phi =u\cdot  {a}\circ \phi \cdot \prod_{j=p+1}^n(\phi-\phi(\zeta_j))^{m_j}\cdot f\circ\phi =v\cdot f\circ \phi.\] Write $\psi:=v \cdot f\circ \phi$. Then $\psi\in \HH(b)$, and the function $\psi$ can be written as
\begin{equation}\label{eq:dsfds72NSD02EZD122}
\psi(z)=\sum_{k=0}^{m_i-1}\frac{\psi^{(k)}(\zeta_i)}{k!}(z-\zeta_i)^k+(z-\zeta_i)^{m_i-1}\varepsilon(z),
\end{equation}
where $\varepsilon(z)\to0$ when $z\to\zeta_i$ non-tangentially. Observe that \[|\psi(r\zeta_i)|\leq |v(r\zeta_i)|\|f\|_\infty.\] 
Since $v(\zeta_i)=0$, we immediately get $\psi(\zeta_i)=0$. 
Assume now that, for some $0\leq\ell\leq m_i-2$, we have $\psi^{(k)}(\zeta_i)=v^{(k)}(\zeta_i)=0$ for every $0\leq k\leq \ell$. Taking into account of \eqref{eq:dsfds72NSD02EZD122}, we have 
\[
\frac{\psi^{(\ell+1)}(\zeta_i)}{(\ell+1)!}=\lim_{k\to\infty}\frac{\psi(r_k\zeta_i)}{(r_k\zeta_i-\zeta_i)^{\ell+1}}=\lim_{k\to\infty}\frac{v(r_k\zeta_i)}{(r_k\zeta_i-\zeta_i)^{\ell+1}}f(\varphi(r_k\zeta_i)).
\]
But 
\[
\frac{v(r_k\zeta_i)}{(r_k\zeta_i-\zeta_i)^{\ell+1}}f(\varphi(r_k\zeta_i))=\begin{cases}
\frac{v(r_k\zeta_i)}{(r_k\zeta_i-\zeta_i)^{\ell+1}}&\text{if $k$ is even,}\\
0&\text{if $k$ is odd,}
\end{cases}
\]
and by \eqref{eq:sdqsdqsdsq23443DZD}, we also have 
\[
\lim_{k\to\infty}\frac{v(r_k\zeta_i)}{(r_k\zeta_i-\zeta_i)^{\ell+1}}=\frac{v^{(\ell+1)}(\zeta_i)}{(\ell+1)!}.
\]
Therefore, in order that $\psi^{(\ell+1)}(\zeta_i)$ to exist, we must necessarily have $\psi^{(\ell+1)}(\zeta_i)=v^{(\ell+1)}(\zeta_i)=0$. Finally, an induction argument shows that $ \psi^{(k)}(\zeta_i)=v^{(k)}(\zeta_i)=0$ for every $0\leq k\leq m_i-1.$ \par\medskip

This ends the proof of \eqref{eq:6} for the second case and so the proof of \Cref{lemma9}.
\end{proof}

\begin{remark}\label{rem:weigth-dansHardy}
If we carefully look at  the proof of \Cref{lemma9}, the boundedness of $W_{u,\varphi}$ on $\HH(b)$ is only used in the subcase 2(b). In the other cases, we only use the hypothesis \ref{H1} and \ref{H2}. In particular, if $u$ and $\phi$ satisfy \ref{H1} and \ref{H2}, and if we assume furthermore that $\varphi(\zeta_i)\in\mathbb D$ for every $1\leq i\leq n$ such that there exists an $\ell$ with $0\leq \ell\leq m_i-1$ and $u^{(\ell)}(\zeta_i)\neq 0$, then the case 2(b) does not appear and we deduce from the proof that $w\in H^2$, without assuming that $\W$ is bounded on $\HH(b)$.
\end{remark}

We are now ready to prove \Cref{thm10}.
 \begin{proof}[Proof of \Cref{thm10}]
The equivalence $  (ii)  \iff (iii)$ follows from \cite{CONTRERAS2001224} (see also \cite{Gallardo-Gutierrez-2010}).   
\par\medskip
Let us prove the implication $(i) \implies (ii)$. Assume that $\W$ is bounded on $\HH(b).$ It follows from \Cref{lemma9} that $w\in H^2$. Let $f\in H^2$ and write  
\[
g(z):=a(z)\cdot\prod_{j=p+1}^n(z-\phi(\zeta_j))^{m_j}f(z),\qquad z\in\mathbb D.
\]
Then $g\in aH^2\subset\HH(b)$, whence
\[\W(g)=u\cdot {a}\circ\phi\cdot\left( \prod_{j=p+1}^n(\phi-\phi(\zeta_j))^{m_j}\right)\cdot f\circ\phi\in   \HH(b).\] 
In particular, there exists $g\in H^2$ and $p\in\P_{N-1}$ such that 
\[u\cdot{a}\circ\phi\cdot\left(\prod_{j=p+1}^n(\phi-\phi(\zeta_j))^{m_j}\right)\cdot f\circ\phi ={a}g+p.\]
Dividing by ${a},$ we get 
\[w\cdot f\circ\phi=g+\frac{p}{{a}}.\]
But $w\in H^2$ and $f\circ \phi\in H^2$ (by the Littlewood subordination principle), whence the function $w\cdot f\circ\phi$ belongs to $H^1$. Thus $p/{a}\in H^1$. Since ${\deg}(p)\leq N-1<{\deg}({a})$, we get $p=0$. Thus, for every $f\in H^2$ the function $w\cdot f\circ \phi$ belongs to $H^2$, which proves $(ii)$.
\par\medskip
Now we prove the implication $(ii)\implies (i)$. Since $u$ and $\varphi$ satisfy \ref{H1} and \ref{H2}, according to \Cref{thm5}, we know that $u\cdot p\circ \phi\in\HH(b)$ for every polynomial $p$. Thus, in particular, $\W(p)\in\HH(b),$ for every $p\in\P_{N-1},$ and since $\HH(b)=aH^2\oplus \P_{N-1}$, it remains to show that 
\begin{equation}\label{eq:7}
\W({a}f)\in\HH(b),\text{ for every } f\in H^2.
\end{equation}
Consider the finite Blaschke product
\[
B(z)=\prod_{j=p+1}^n\left(\frac{z-\lambda_j}{1-\overline{\lambda_j}z}\right)^{m_j},
\]
where we write $\lambda_j=\phi(\zeta_j)\in\mathbb{D}$ for $\ p+1\leq j\leq n$.
Recall that $H^2=BH^2\oplus K_B$ and  
\[K_{B}=\left\{ \frac{p}{ \prod_{j=p+1}^n\left(
1-\overline{\lambda_j}z \right)^{m_j}}, p\in\P_{N_1-1}\right\},\text{ where } N_1=\sum_ {j=p+1}^nm_j.   \]
See for instance \cite{MR3526203}*{Proposition 5.16}. Let $g=p/\prod_{j=p+1}^n\left(
1-\overline{\lambda_j}z \right)^{m_j}\in K_B.$ Then
\[\W(ag)= \frac{u\cdot q\circ\phi}{ ~\prod_{j=p+1}^n\left(
1-\overline{\lambda_j}\phi \right)^{m_j}~},\]
where $q=ap$ is a polynomial. Using \Cref{cor:7}, we deduce that $\W(ag)\in\HH(b)$ for every $g\in K_B$. Thus, in order to prove \eqref{eq:7}, it remains to prove that $\W(aBH^2)\subset\HH(b)$, i.e. 
\begin{equation}
    \label{eq:8}
    u\cdot {a}\circ\phi \cdot B\circ\phi \cdot h\circ\phi\in\HH(b),\text{ for every } h\in H^2.
\end{equation}
Observe that if $h\in H^2$, then \[u\cdot{a}\circ\phi\cdot B\circ\phi\cdot h\circ\phi = u\cdot {a}\circ\phi\cdot\left(  \prod_{j=p+1}^{n}(\phi-\phi(\zeta_j))^{m_j}\right) \cdot h_1\circ\phi ,\]
where \[
h_1=\frac{h}{\prod_{j=p+1}^n(1-\overline{\lambda_j}z)^{m_j}}\in H^2.
\]
Then 
\[u\cdot{a}\circ\phi\cdot B\circ\phi\cdot  h\circ\phi={a}w\cdot h_1\circ\phi={a}\cdot W_{w,\phi}(h_1).\] 
According to $(ii)$, $W_{w,\phi}(h_1)\in H^2$ and then \[u\cdot {a}\circ\phi \cdot B\circ\phi \cdot h\circ\phi\in {a}H^2\subset \HH(b),\] which proves \eqref{eq:8}. Thus we finally deduce that $\W$ is bounded on $\HH(b)$, and this concludes the proof of \Cref{thm10}.
\end{proof}

In the proof of \Cref{thm10}, to show that $W_{u,\phi}(a K_B\oplus \P_{N-1})\subset\HH(b)$, the key was the following: if $f\in a BH^2$ and $g\in H^2$ with $f= a Bg$, then
\begin{equation}\label{eq:89p}
    W_{u,\varphi}f=u\cdot f\circ \varphi=u\cdot a\circ \varphi\cdot B\circ \varphi \cdot g\circ\varphi= a W_{\widetilde w,\varphi}g,
\end{equation}
where $\widetilde w=u\cdot (a\circ \varphi\,\cdot B\circ \varphi)/a$. Moreover, since $\widetilde w$ and $w$ differs by a factor which is invertible in $H^\infty$, the boundedness of $W_{\widetilde w,\varphi}$ on $H^2$ is equivalent to the boundedness of $W_{w,\varphi}$ on $H^2$ and thus to the boundedness of $\W$ on $\HH(b)$.

Now, if $P:\HH(b)\to aBH^2$ denotes the orthonormal projection from $\HH(b)$ onto $aBH^2$, then \eqref{eq:89p} and \eqref{eq:norm-Hb-decomposition} imply that
\[\|W_{\widetilde w,\phi}\|_{\mathcal L(H^2)}=\|\W P\|_{\mathcal L(\HH(b))}\le\|\W\|_{\mathcal L(\HH(b))}.\]
It is natural to ask if we have equivalence between the norm of $W_{\widetilde w,\phi}$ on $H^2$ and the norm of $\W$ on $\HH(b)$, in the sense that there exists a constant $c>0$ which depends only on $b$ such that $\|\W\|_{\mathcal L(\HH(b))}\le c \|W_{\widetilde w,\phi}\|_{\mathcal L(H^2)}$.  However, this is not true in general, as we can see in the following example.

\begin{example}
Let $b(z)=\frac{1+z}2$, $z\in\mathbb D$. Then $\tilde a(z)=\frac{1-z}2$ and $ a(z)=z-1$. Take $u\equiv1$, and let $0<\varepsilon<1/2$ and $\varphi_\varepsilon\in \HH(b)$ be given by 
\[\varphi_\varepsilon(z)=\varepsilon(z-1),~z\in\mathbb D.\] 
Since $\varphi_\varepsilon(1)=0$, we have that $B(z)=z$, and then
  \[\widetilde w_\varepsilon(z)=\frac{(\varphi_\varepsilon(z)-1)\varphi_\varepsilon(z)}{z-1}=\varepsilon(\varepsilon(1-z)-1),~z\in\mathbb D.\]
In particular, we deduce that $\|\tilde w_{\varepsilon}\|_\infty\le\varepsilon(2\varepsilon+1)\le 2\varepsilon$. Moreover, using a well-known estimate on the norm of composition operator on $H^2$ (see \cite{MR1397026}*{Theorem 3.6}), we have
\[\|C_{\varphi_\varepsilon}\|_{\mathcal L(H^2)}\le \frac{1+|\varphi_\varepsilon(0)|}{1-|\varphi_\varepsilon(0)|}=\frac{1+\varepsilon}{1-\varepsilon}\le 3.\]
Therefore, we see that $W_{\widetilde w,\varphi_{\varepsilon}}$ is bounded on $H^2$ and 
\[\|W_{\widetilde w_\varepsilon,\varphi_\varepsilon}\|_{\mathcal L(H^2)}\le\|\widetilde w_{\varepsilon}\|_\infty \|C_{\varphi_\varepsilon}\|_{\mathcal L(H^2)}\le 6\varepsilon.\]
But $W_{1,\varphi_\varepsilon}1=C_{\varphi_\varepsilon}1=1$. So $\|W_{1,\varphi_\varepsilon}\|_{\mathcal L(\HH(b))}\ge1$ and thus
\[\frac{\|W_{1,\varphi_\varepsilon}\|_{\mathcal L(\HH(b))}}{\|W_{\widetilde w_\varepsilon,\varphi_\varepsilon}\|_{\mathcal L(H^2)}}\ge\frac1{6\varepsilon}\longrightarrow \infty~\text{when}~\varepsilon\to0.\]
\end{example}

\section{Necessary or sufficient conditions and examples}\label{section:csq}
\subsection{Sufficient condition for boundedness via the essential range}\label{Subsection:EssentialRange}
Let $\phi:\mathbb D\to\mathbb D$ be analytic and let $b$ be in the closed unit ball of $H^\infty$. Then the set of admissible weights is 
\[
\mathcal M_{\varphi,b}=\{u\in\HH(b):\W\text{ is bounded on }\HH(b)\}.
\]
It can be easily seen that 
\begin{equation}\label{eq:ainfini-inclus-set-of-admissible-poids}
a H^\infty\subset \{u\in\HH(b):u\varphi\in\HH(b)\text{ and }w\in H^\infty\}\subset \mathcal M_{\varphi,b},
\end{equation}
where $w$ is defined by \eqref{eq:defining-w}.
Indeed, the first inclusion is clear, and the second one is a direct consequence of \Cref{thm10}.

\par\smallskip
In Section~\ref{section:admissible}, we shall study more precisely the inclusions in \eqref{eq:ainfini-inclus-set-of-admissible-poids}, and give another description of $\mathcal M_{\phi,b}$ in the case when $\phi$ is a finite Blaschke product.  In our next result, we shall see that it is not necessary that $w$ be bounded everywhere to imply that $\W$ is bounded on $\HH(b)$. Indeed, in \cite{Gallardo-Gutierrez-2010}*{Thm 2.9}, it is proved that if $w\in H^2$ and if there exists $\delta>0$ and $c_\delta>0$ such that $|w|\le c_\delta$ almost everywhere on the set $A_\delta:=\{\zeta\in\mathbb{T}:|\phi(\zeta)|\geq1-\delta\}$, then  $W_{w,\phi}$ is bounded on $H^2$. This result combined with \Cref{thm10} gives the following.

\begin{corollary}\label{Coro11}
Let $u$ and $\varphi$ satisfy \ref{H1} and \ref{H2}, and let $w$ be the function defined by \eqref{eq:defining-w}. Suppose that $w\in H^2$ and that there exists $\delta>0$ such that \[\supess_{z\in A_\delta}|w(z)|<\infty.\]
Then the operator $\W$ is bounded on $\HH(b)$. 
\end{corollary}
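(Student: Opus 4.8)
The plan is to obtain the statement as a direct combination of \Cref{thm10} with the $H^2$-boundedness criterion quoted just above from \cite{Gallardo-Gutierrez-2010}*{Thm 2.9}. First I would note that, by hypothesis, $u$ and $\varphi$ satisfy \ref{H1} and \ref{H2}, so the weight $w$ of \eqref{eq:defining-w} is well defined and \Cref{thm10} is applicable. Consequently, to prove that $\W$ is bounded on $\HH(b)$ it suffices, by the equivalence $(i)\iff(ii)$ of \Cref{thm10}, to check that $W_{w,\phi}$ is bounded on $H^2$.

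Next I would verify that the hypotheses of \cite{Gallardo-Gutierrez-2010}*{Thm 2.9} are met by $w$. That result asserts that $W_{w,\phi}$ is bounded on $H^2$ as soon as $w\in H^2$ and there exist $\delta>0$ and $c_\delta>0$ with $|w|\le c_\delta$ almost everywhere on $A_\delta=\{\zeta\in\mathbb T:|\phi(\zeta)|\ge 1-\delta\}$. The first requirement, $w\in H^2$, is part of the assumptions, and the second is exactly the condition $\supess_{z\in A_\delta}|w(z)|<\infty$, taking $c_\delta$ to be any number exceeding this essential supremum. Hence $W_{w,\phi}$ is bounded on $H^2$, that is, assertion $(ii)$ of \Cref{thm10} holds.

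Finally, invoking the implication $(ii)\implies(i)$ of \Cref{thm10} yields that $\W$ is bounded on $\HH(b)$, which completes the argument. I do not expect any genuine obstacle: the corollary is an immediate consequence of the main theorem together with the cited $H^2$-result. The only points deserving a line of care are that the set $A_\delta$ appearing in the statement coincides verbatim with the one used in \cite{Gallardo-Gutierrez-2010}, and that the standing assumptions \ref{H1} and \ref{H2} — needed both for $w$ to be well defined and for \Cref{thm10} to be applicable — are indeed present among the hypotheses.
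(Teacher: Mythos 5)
Your proposal is correct and matches the paper's own argument, which likewise obtains the corollary by applying the cited $H^2$-boundedness criterion from \cite{Gallardo-Gutierrez-2010} to $w$ and then invoking the equivalence $(i)\iff(ii)$ of \Cref{thm10}. No gaps.
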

We will now give an example of a weight $u$ which is not a multiplier of $\HH(b)$ and a symbol $\phi$ such that the composition by $\phi$ is not bounded on $\HH(b)$, but nevertheless the associated weighted composition operator $\W$ is bounded on $\HH(b)$.
\begin{example}
Let us consider $b(z)=\frac{1+z}2$, $z\in\mathbb D$. Then $\tilde a(z)=\frac{1-z}{2}$, $a(z)=z-1$ and $\HH(b)=(z-1)H^2\oplus\mathbb C.$ Let $\phi:\mathbb D\longrightarrow\mathbb D$ defined by
\[\phi(z)=1-(1-z)^{1/2},~z\in\mathbb D.\]
Then $C_\phi$ is not bounded on $\HH(b)$. Indeed, let $f(z)=1-z$, $z\in\mathbb D$. Then $f\in \HH(b)$ and $C_\phi f(z)=(1-z)^{1/2}$. The function $C_\phi f$ is in the disc algebra $A(\mathbb D)$ (i.e., the space of analytic functions on $\mathbb D$ and continuous on $\overline{\mathbb D}$), and satisfies $C_\phi f(1)=0$. Thus, $C_\phi f\in \HH(b)$ if and only if there exists $h\in H^2$ such that $C_\phi f=(z-1)h$. But this is equivalent to $h=-(1-z)^{-1/2}$, which does not belong to $H^2$. Therefore, $C_\phi f\notin\HH(b)$ and $C_\phi$ is not bounded on $\HH(b)$. Now, let $u$ be defined by
\[u(z)=\frac{(1-z)^{3/4}}{(1+z)^{1/4}},~z\in\mathbb D.\]
Since $|u(z)|\longrightarrow \infty$ when $z\to -1$, the function $u$ is not bounded. But, remind that $\mathfrak{M}(\HH(b))=H^\infty\cap \HH(b)$, and then it follows that the multiplication operator by $u$ is not bounded on $\HH(b)$.
\par\smallskip
Let us now check that we can apply \Cref{Coro11} to get the boundedness of $\W$ on $\HH(b)$. First, note that 
\[u(z)=(z-1)\underset{\in H^2}{\underbrace{\frac{-1}{(1-z)^{1/4}(1+z)^{1/4}}}}\quad\text{and}\quad u(z)\phi(z)=(z-1)\underset{\in H^2}{\underbrace{\frac{(1-z)^{1/2}-1}{(1-z)^{1/4}(1+z)^{1/4}}}},\]
and then $u,\,u\phi\in aH^2\subset\HH(b)$. 
In particular, $u$ and $\varphi$ satisfy \ref{H1} and \ref{H2}. 
Thus it remains to prove that $\supess_{A_\delta}|w|$ is finite for some $\delta>0$. With our choice of $u$ and $\varphi$, we have 
\[
 w(z)=\frac{u(z)(a\circ\phi)(z)}{a(z)}=\frac{(1-z)^{3/4}\big(1-z)^{1/2}}{(1+z)^{1/4}(1-z)}=\frac{(1-z)^{1/4}}{(1+z)^{1/4}},~z\in\mathbb D.
 \]
Note that $\varphi\in A(\mathbb D)$ and $\phi(-1)= 1-\sqrt2$. Then, by the continuity of $\phi$ on $\mathbb T$, for a sufficient small 
$\delta>0$ and for some open neighborhood $V$ of $-1$, we have $A_\delta\subset \mathbb T\setminus V$. It follows now easily that $w$ is bounded on $\mathbb T\setminus V$, and thus on $A_\delta$. Therefore we can  apply \Cref{Coro11} to conclude that $\W$ is bounded on $\HH(b)$. 
\end{example}
\subsection{Sufficient condition for boundedness via non-unimodular radial limits of $\phi$ at the zeroes of $a$}
\begin{corollary}
Let $u$ and $\phi$ satisfy \ref{H1} and \ref{H2}. Moreover, assume that for every $1\leq i\leq n$, we have 
\[
\limsup_{z\to\zeta_i}|\phi(z)|<1. 
\]
Then, for every $1\leq i\leq n$, there exists an open arc $V_i$ containing $\zeta_i$ such that if $u$ is essentially bounded on $\mathbb T\setminus(\bigcup_{i=1}^n V_i)$, then the operator $\W$ is bounded on $\HH(b)$.
\end{corollary}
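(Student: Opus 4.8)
The plan is to reduce the statement to \Cref{Coro11}. Concretely, it suffices to produce, for each $i$, an open arc $V_i\ni\zeta_i$ depending only on $\phi$ (and $b$), such that, as soon as $u$ is essentially bounded on $\mathbb{T}\setminus\bigcup_{i=1}^n V_i$, one has $w\in H^2$ and $\supess_{z\in A_\delta}|w(z)|<\infty$ for some $\delta>0$, where $A_\delta=\{\zeta\in\mathbb{T}:|\phi(\zeta)|\ge 1-\delta\}$; then \Cref{Coro11} applies.

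First I would make the hypothesis quantitative. For each $1\le i\le n$, the condition $\limsup_{z\to\zeta_i}|\phi(z)|<1$ provides $\eta_i>0$ and $\rho_i>0$ such that $|\phi(z)|\le 1-\eta_i$ for every $z\in\mathbb{D}$ with $|z-\zeta_i|<2\rho_i$. Shrinking the $\rho_i$, I may assume the arcs $V_i:=\{\zeta\in\mathbb{T}:|\zeta-\zeta_i|<\rho_i\}$ are pairwise disjoint; these depend only on $\phi$. Two consequences will be used: (a) the region $\{z\in\mathbb{D}:|z-\zeta_i|<2\rho_i\}$ is a neighbourhood in $\mathbb{D}$ of $V_i$ on which $|\phi|\le 1-\eta_i$, hence the non-tangential boundary values of $\phi$ satisfy $|\phi(\zeta)|\le 1-\eta_i$ for a.e.\ $\zeta\in V_i$; (b) the zeros $\zeta_1,\dots,\zeta_n$ of the polynomial $a$ all lie in $\bigcup_i V_i$, so $a$ is continuous and zero-free on the compact set $\mathbb{T}\setminus\bigcup_i V_i$, and therefore $\inf_{\mathbb{T}\setminus\bigcup_i V_i}|a|>0$.

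Next I would verify that $w\in H^2$. Let $1\le i\le n$ be such that $u^{(\ell)}(\zeta_i)\ne 0$ for some $0\le\ell\le m_i-1$, and let $\ell_i$ be the least such $\ell$. By \Cref{Lemma:phi-in-M-ai-li}, $\phi$ has a non-tangential limit $\phi(\zeta_i)$ at $\zeta_i$; since $|\phi|\le 1-\eta_i$ on $\{z\in\mathbb{D}:|z-\zeta_i|<2\rho_i\}$, this limit obeys $|\phi(\zeta_i)|\le 1-\eta_i<1$, i.e.\ $\phi(\zeta_i)\in\mathbb{D}$. This holds for every $i$ carrying a non-vanishing derivative of $u$ of order at most $m_i-1$, which is exactly the extra assumption in \Cref{rem:weigth-dansHardy}; hence $w\in H^2$.

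Finally, put $\delta:=\tfrac{1}{2}\min_{1\le i\le n}\eta_i>0$. For a.e.\ $\zeta\in V_i$ we have $|\phi(\zeta)|\le 1-\eta_i<1-\delta$, so $\zeta\notin A_\delta$; thus $A_\delta\subset\mathbb{T}\setminus\bigcup_i V_i$ up to a null set. On $A_\delta$ I bound the factors of $w=u\cdot(a\circ\phi)\cdot\prod_{j=p+1}^n(\phi-\phi(\zeta_j))^{m_j}/a$ separately: $|u|$ is essentially bounded on $A_\delta$ by the assumption on $u$; $a\circ\phi$ and $\prod_j(\phi-\phi(\zeta_j))^{m_j}$ are bounded on all of $\mathbb{D}$, being polynomials evaluated at $\phi$ with $\phi(\mathbb{D})\subset\mathbb{D}$; and $1/a$ is bounded on $A_\delta$ by fact (b). Hence $\supess_{z\in A_\delta}|w(z)|<\infty$, and \Cref{Coro11} yields that $\W$ is bounded on $\HH(b)$. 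The only delicate point — the main, if mild, obstacle — is that the \emph{same} set $\mathbb{T}\setminus\bigcup_i V_i$ must carry both the hypothesis on $u$ and the lower bound on $|a|$, which is possible precisely because $Z(a)=\{\zeta_1,\dots,\zeta_n\}\subset\bigcup_i V_i$, and that $\delta$ must be taken $<\eta_i$ to force $A_\delta$ into that set; once this bookkeeping is arranged, $w\in H^2$ comes for free from \Cref{Lemma:phi-in-M-ai-li} and \Cref{rem:weigth-dansHardy}.
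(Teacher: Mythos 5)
Your proof is correct and takes essentially the same route as the paper's: both first obtain $w\in H^2$ from \Cref{Lemma:phi-in-M-ai-li} together with \Cref{rem:weigth-dansHardy}, and both rest on the same two facts, namely that $|\phi|$ is bounded away from $1$ on a neighbourhood of each $\zeta_i$ while $|a|$ is bounded below (hence $|w|$ essentially bounded) on the complementary part of $\mathbb{T}$. The only difference is organisational: you observe that $A_\delta\subset\mathbb{T}\setminus\bigcup_i V_i$ and invoke \Cref{Coro11}, whereas the paper directly verifies condition $(iii)$ of \Cref{thm10} by splitting the Carleson-type integral over $V$ and $\mathbb{T}\setminus V$; your shortcut is legitimate since \Cref{Coro11} is already established.
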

The proof of this result is similar to the proof of Corollary 4.12 in \cite{alhajj2022composition}.
\begin{proof}Since for every $1\le i\le n$, $\limsup_{z\to\zeta_i}|\phi(z)|<1$, there exist $\delta>0$ and $0<L<1$ such that for every $1\le i\le n$ and every $z\in\mathbb D$, if $|z-\zeta_i|<\delta$ then $|\phi(z)|\le L$. Let $V_i=\{\zeta\in\mathbb T:|\zeta-\zeta_i|<\delta\}$ and $V=\bigcup_{i=1}^n V_i$. Then $|\phi|\le L$ almost everywhere on $V$. 
Assume now that there exists $1\leq i\leq n$ and $0\leq \ell\leq m_i-1$ such that $u^{(\ell)}(\zeta_i)\neq 0$. Then, according to \Cref{Lemma:phi-in-M-ai-li}, we know that $\phi$ has a non-tangential limit at $\zeta_i$, and it follows from the hypothesis that $\phi(\zeta_i)\in\mathbb D$. In particular, 
we deduce from \Cref{rem:weigth-dansHardy} that the weight $w$ belongs to $H^2$. 
\par\medskip
Assume now that $u$ is essentially bounded on $\mathbb T\setminus V$, and let us prove  
\[\sup_{\lambda\in \mathbb{D}} \int_{\mathbb{T}}(1-|\lambda|^2) \frac{|w(\zeta)|^2}{|1-\overline{\lambda}\phi(\zeta)|^2}dm(\zeta)<\infty.
\]
First, remark that for every $\zeta\in\mathbb T\setminus V$, we have $|a(\zeta)|\ge \delta^N$. In particular, since $u$ is essentially bounded on $\mathbb T\setminus V$, we deduce that there exists $C>0$ such that $|w|\le C$ almost everywhere on $\mathbb T\setminus V$. Therefore, for every $\lambda\in\mathbb D$, we obtain
\[\int_{\mathbb{T}\setminus V}(1-|\lambda|^2) \frac{|w(\zeta)|^2}{|1-\overline{\lambda}\phi(\zeta)|^2}dm(\zeta)\le C^2(1-|\lambda|^2)\|C_\phi k_\lambda\|_{2}^2. \]
By the Littlewood subordination principle, the operator $C_\varphi$ is bounded on $H^2$, whence $\|C_\varphi k_\lambda\|^2_{2}\leq \|C_\varphi\|_{\mathcal L(H^2)}^2\|k_\lambda\|^2_2=\|C_\varphi\|_{\mathcal L(H^2)}^2 (1-|\lambda|^2)^{-1}$, which gives
\[
\sup_{\lambda\in\mathbb D}\int_{\mathbb T\setminus V}1-|\lambda|^2) \frac{|w(\zeta)|^2}{|1-\overline{\lambda}\phi(\zeta)|^2}dm(\zeta)\leq C^2\|C_\varphi\|_{\mathcal L(H^2)}^2.
\]
For the integral on $V$, using that $|\varphi|\leq L<1$ a.e. on $V$, we have 
\[\int_{V}(1-|\lambda|^2) \frac{|w(\zeta)|^2}{|1-\overline{\lambda}\phi(\zeta)|^2}dm(\zeta)\le \frac{\|w\|_{H^2}^2}{(1-L)^2}.\]
Thus, we obtain  
\[\sup_{\lambda\in \mathbb{D}} \int_{\mathbb{T}}(1-|\lambda|^2) \frac{|w(\zeta)|^2}{|1-\overline{\lambda}\phi(\zeta)|^2}dm(\zeta)\le C^2\|C_\phi\|_{\mathcal L(H^2)}^2+ \frac{\|w\|_{H^2}^2}{(1-L)^2}<\infty.\]
We conclude now by \Cref{thm10} which implies that $\W$ is bounded on $\HH(b)$. 
\end{proof}
\subsection{Necessary condition for boundedness via Carathéodory derivative}
Recall first that a function $\varphi$ in the closed unit ball of $H^\infty$ has an {\emph{angular derivative in the sense of Carath\'eodory} (briefly an ADC) at the point $\zeta\in\mathbb T$ if $\varphi$ and $\varphi'$ both have a non-tangential limit at $\zeta$ and $|\varphi(\zeta)|=1$. A well-known characterization of Carath\'eodory says that $\varphi$ has an ADC at $\zeta\in\mathbb T$ if and only if 
\[
c:=\liminf_{z\to\zeta}\frac{1-|\varphi(z)|}{1-|z|}<\infty.
\]
Moreover, in this case, $c=|\varphi'(\zeta)|>0$. See for instance \cite{MR1397026}*{Theorem 2.44} or \cite{Fricain-2015-vol2}*{Theorem 21.1}.

We have seen in \Cref{lemma8} that if $\W$ is bounded on $\HH(b)$ and if there exists $1\leq k\leq n$ such that $u(\zeta_k)\neq 0$, then $\varphi$ has a non-tangential limit at $\zeta_k$ with $\varphi(\zeta_k)\in Z(a)\cup\mathbb D$. If $\varphi(\zeta_k)\in Z(a)$, we can say more on the boundary behaviour of $\varphi$.

\begin{corollary}\label{coro:woi}
Let $u$ and $\phi$ such that $\W$ is bounded on $\HH(b)$. Assume that there exists $1\le k,\ell\le n$ such that $\phi(\zeta_k)=\zeta_\ell$. Then 
\[\liminf_{z\to\zeta_k}|u(z)|^2\frac{(1-|\phi(z)|)^{2m_\ell-1}}{(1-|z|)^{2m_k-1}} <\infty.\]
In particular, if $u(\zeta_k)\neq0$ then $m_k\le m_\ell$. If, moreover, $m_\ell=m_k$ then $\phi$ has an angular derivative in the sense of Carathéodory at $\zeta_k$.
\end{corollary}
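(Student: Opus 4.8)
The plan is to push the whole problem onto the Hardy space via \Cref{thm10} and then merely evaluate the resulting estimate along the radius $[0,\zeta_k)$.

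Since $\W$ is bounded on $\HH(b)$ and the polynomials $1$ and $z$ belong to $\HH(b)$, we have $u=\W(1)\in\HH(b)$ and $u\phi=\W(z)\in\HH(b)$, so \ref{H1} and \ref{H2} hold and \Cref{thm10} applies: $W_{w,\phi}$ is bounded on $H^2$, where $w$ is given by \eqref{eq:defining-w}. Applying $W_{w,\phi}$ to the constant function $1$ gives $w=W_{w,\phi}(1)\in H^2$, so $w$ is a genuine holomorphic function on $\mathbb D$. The first step is the standard action of a weighted composition operator on the Szeg\H{o} kernels $k_\lambda(z)=(1-\overline\lambda z)^{-1}$ of $H^2$: one checks $W_{w,\phi}^{*}k_\lambda=\overline{w(\lambda)}\,k_{\phi(\lambda)}$, hence $|w(\lambda)|\,\|k_{\phi(\lambda)}\|_2\le\|W_{w,\phi}\|\,\|k_\lambda\|_2$, which, using $\|k_\lambda\|_2^2=(1-|\lambda|^2)^{-1}$, reads
\[
|w(\lambda)|^{2}(1-|\lambda|^{2})\ \le\ \|W_{w,\phi}\|^{2}\,(1-|\phi(\lambda)|^{2}),\qquad \lambda\in\mathbb D .
\]

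The second step is to take $\lambda=r\zeta_k$ and let $r\to 1^{-}$. Since $\phi(r\zeta_k)\to\zeta_\ell\in\mathbb T$, since $\phi(\zeta_j)\in\mathbb D$ for $p+1\le j\le n$, and since the $\zeta_i$ are pairwise distinct, every factor of the numerator and of the denominator of \eqref{eq:defining-w}, evaluated at $r\zeta_k$, other than the three distinguished factors $u(r\zeta_k)$, the factor $(\phi(r\zeta_k)-\zeta_\ell)^{m_\ell}$ coming from $a\circ\phi$, and the factor $(r\zeta_k-\zeta_k)^{m_k}$ coming from $a$, converges to a finite nonzero limit. Hence there exist $c>0$ and $r_0<1$ with
\[
|w(r\zeta_k)|\ \ge\ c\,|u(r\zeta_k)|\,\frac{|\phi(r\zeta_k)-\zeta_\ell|^{m_\ell}}{(1-r)^{m_k}},\qquad r_0<r<1 .
\]
Substituting into the displayed inequality and using $1-r^{2}\ge 1-r$, $1-|\phi(r\zeta_k)|^{2}\le 2\bigl(1-|\phi(r\zeta_k)|\bigr)$, the elementary bound $|\phi(r\zeta_k)-\zeta_\ell|\ge 1-|\phi(r\zeta_k)|$, and finally dividing by $1-|\phi(r\zeta_k)|>0$, we obtain a constant $C_1$ with
\[
|u(r\zeta_k)|^{2}\,\frac{\bigl(1-|\phi(r\zeta_k)|\bigr)^{2m_\ell-1}}{(1-r)^{2m_k-1}}\ \le\ C_1,\qquad r_0<r<1 .
\]
Letting $r\to1^{-}$ along this single family already yields $\liminf_{z\to\zeta_k}|u(z)|^{2}\bigl(1-|\phi(z)|\bigr)^{2m_\ell-1}/(1-|z|)^{2m_k-1}\le C_1<\infty$, which is the main statement.

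For the remaining assertions, assume $u(\zeta_k)\neq0$. Since $u\in\HH(b)$ has a non-tangential limit at $\zeta_k$, $|u(r\zeta_k)|^{2}\to|u(\zeta_k)|^{2}>0$, so the last display gives $\bigl(1-|\phi(r\zeta_k)|\bigr)^{2m_\ell-1}\le C_2\,(1-r)^{2m_k-1}$ for $r$ near $1$. By the standard Schwarz--Pick estimate there is $c_\phi>0$ with $1-|\phi(z)|\ge c_\phi(1-|z|)$ on $\mathbb D$; combined with the previous bound this forces $m_k\le m_\ell$, for otherwise the right-hand side would tend to $0$ while the left-hand side stays bounded below. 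If moreover $m_k=m_\ell$, extracting $(2m_k-1)$-th roots in $\bigl(1-|\phi(r\zeta_k)|\bigr)^{2m_k-1}\le C_2(1-r)^{2m_k-1}$ gives $\bigl(1-|\phi(r\zeta_k)|\bigr)/(1-r)\le C_2^{1/(2m_k-1)}$ near $r=1$, so $\liminf_{z\to\zeta_k}\bigl(1-|\phi(z)|\bigr)/(1-|z|)<\infty$; by the Carath\'eodory characterization recalled at the beginning of this subsection, $\phi$ has an angular derivative in the sense of Carath\'eodory at $\zeta_k$.

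The only delicate point is the bookkeeping in the second step, namely verifying that, apart from the three distinguished factors, every term appearing in $w(r\zeta_k)$ stays bounded above and bounded away from $0$ as $r\to1^{-}$; this uses nothing more than the $\zeta_i$ being pairwise distinct, the memberships $\phi(\zeta_j)\in\mathbb D$ for the indices occurring in \eqref{eq:defining-w}, and $\phi(r\zeta_k)\to\zeta_\ell\in\mathbb T$. Everything else is routine, and the reduction to $H^2$ via \Cref{thm10} does all the real work.
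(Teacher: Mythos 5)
Your proof is correct and follows essentially the same route as the paper's: reduction to $H^2$ via \Cref{thm10}, testing $W_{w,\phi}^*$ on the Szeg\H{o} kernels, radial evaluation at $\zeta_k$, and the same factor-by-factor bookkeeping of $w(r\zeta_k)$ using $|\phi(r\zeta_k)-\zeta_\ell|\ge 1-|\phi(r\zeta_k)|$. The only cosmetic difference is that you exclude $m_k>m_\ell$ by the direct Schwarz--Pick bound $1-|\phi(z)|\ge c_\phi(1-|z|)$, whereas the paper invokes the Carath\'eodory characterization (the liminf cannot be $0$); both are valid.
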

The proof of this result is similar to the proof of Corollary 4.5 in \cite{alhajj2022composition}.
\begin{proof}
Since $\W$ is bounded on $\HH(b)$, \Cref{thm10} implies that $W_{w,\phi}$ is bounded on $H^2$, and its adjoint $W_{w,\phi}^*$ is bounded as well on $H^2$. It can be easily checked that for $\lambda\in\mathbb D$,  we have $W_{w,\phi}^*k_\lambda=\overline{w(\lambda)}k_{\phi(\lambda)}$. Thus, we deduce that
\[\frac{|w(\lambda)|^2}{1-|\phi(\lambda)|^2}\le \frac{C^2}{1-|\lambda|^2},\qquad\text{for every }\lambda\in\mathbb D,\]
where $C$ is the norm of $W_{w,\varphi}$ on $H^2$. Applying the previous inequality to $\lambda=r\zeta_k$ with $0<r<1$, we obtain 
\begin{equation}\label{eq:540}
    |w(r\zeta_k)|^2\frac{1-r^2}{1-|\phi(r\zeta_k)|^2}\le C^2.
\end{equation}
Observe that
\begin{align*}
w(r\zeta_k)&=u(r\zeta_k)\prod_{j=1}^n\left(\frac{\phi(r\zeta_k)-\zeta_j}{r\zeta_k-\zeta_j}\right)^{m_j}\prod_{j=p+1}^n(\phi(r\zeta_k)-\phi(\zeta_j))^{m_j}\\
&=u(r\zeta_k)\frac{(\phi(r\zeta_k)-\zeta_l)^{m_\ell}}{\big((r-1)\zeta_k\big)^{m_k}}\frac{\displaystyle \prod_{j\neq \ell}(\phi(r\zeta_k)-\zeta_j)^{m_j}}{\displaystyle\prod_{j\neq k}(r\zeta_k-\zeta_j)^{m_j}}\prod_{j=p+1}^n(\phi(r\zeta_k)-\phi(\zeta_j))^{m_j}.
\end{align*}
Then \eqref{eq:540} implies
\begin{equation}
|u(r\zeta_k)|^2\frac{(1-|\phi(r\zeta_k)|)^{2m_\ell-1}}{(1-r)^{2m_k-1}} \le h(r),\label{eq:2093}
\end{equation}
where
\[h(r)=C^2\frac{\displaystyle (1+|\varphi(r\zeta_k)|)\prod_{j\neq k}|r\zeta_k-\zeta_j|^{2m_j}}{\displaystyle (1+r)\prod_{j\neq \ell}|\phi(r\zeta_k)-\zeta_j|^{2m_j}\prod_{j=p+1}^n|\phi(r\zeta_k)-\phi(\zeta_j)|^{2m_j}}.\]
But note that for $j\ge p+1$, $\phi(\zeta_j)$ belongs to $\mathbb D$, whence
\[
A:=\lim_{r\to1}h(r)
=C^2\frac{\displaystyle\prod_{j\neq k}|\zeta_k-\zeta_j|^{2m_j}}{\displaystyle \prod_{j\neq \ell}|\zeta_\ell-\zeta_j|^{2m_j}\prod_{j=p+1}^n|\zeta_\ell-\phi(\zeta_j)|^{2m_j}}<\infty.
\]
We thus deduce that 
\[\liminf_{z\to\zeta_k}|u(z)|^2\frac{(1-|\phi(z)|)^{2m_\ell-1}}{(1-|z|)^{2m_k-1}}\le A <\infty.\]
Suppose now that $u(\zeta_k)\neq0$. Then \eqref{eq:2093} gives
\begin{equation}\label{eq:2398}
\left(\frac{1-|\phi(r\zeta_k)|}{1-r}\right)^{2m_\ell-1}\le h(r)\frac{(1-r)^{2(m_k-m_\ell)}}{|u(r\zeta_k)|^2}.
\end{equation}
If $m_k>m_\ell$, then \eqref{eq:2398}  implies that $\frac{1-|\phi(r\zeta_k)|}{1-r}\longrightarrow0$ when $r\to1$ and we get
\[c=\liminf_{z\to\zeta_k}\frac{1-|\phi(z)|}{1-|z|}=0,
\]
which is not possible by Carathéodory’s theorem (see the discussion at the beginning of this subsection). If $m_k=m_\ell$ then \eqref{eq:2398} gives 
\[c=\liminf_{z\to\zeta_k}\frac{1-|\phi(z)|}{1-|z|}\le\left(\frac{A}{|u(\zeta_k)|^2}\right)^{\frac1{2m_l-1}}<\infty,\]
and thus by Carathéodory’s theorem, the function $\phi$ has an ADC at $\zeta_k$.
\end{proof}
In the particular case where $b(z)=\frac{1+z}{2}$, we will see that the existence of an ADC at $1$ is also a sufficient condition for the existence of a weight $u$ insuring the boundedness of $\W$ on $\HH(b)$.
\begin{corollary}
Let $b(z)=\frac{1+z}2$, $z\in\mathbb D$, and let $\phi:\mathbb D\to\mathbb D$ analytic which has a non-tangential limit at $1$ with $\phi(1)=1$. Then the following assertions are equivalent.
\begin{enumerate}[(i)]
    \item There exists $u\in \HH(b)$ such that $u(1)\neq0$ and $\W$ is bounded on $\HH(b)$.
    \item $\phi$ has an angular derivative in the sense of Carathéodory at $1$.
    \item The operator $C_\phi$ is bounded on $\HH(b)$.
\end{enumerate}
\end{corollary}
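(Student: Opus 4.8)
The plan is to prove the cyclic implications $(iii)\Rightarrow(i)\Rightarrow(ii)\Rightarrow(iii)$, the first two being essentially formal. Throughout one uses that, for $b(z)=\tfrac{1+z}{2}$, the Pythagorean mate is $\tilde a(z)=\tfrac{1-z}{2}$, so that $a(z)=z-1$, $n=1$, $\zeta_1=1$, $m_1=1$, and $\HH(b)=(z-1)H^2\oplus\mathbb C$. For $(iii)\Rightarrow(i)$ I would simply take $u\equiv1$: then $u\in\HH(b)$, $u(1)=1\ne0$, and $\W=C_\phi$ is bounded by hypothesis. For $(i)\Rightarrow(ii)$ I would invoke \Cref{coro:woi}: since $\phi(\zeta_1)=\phi(1)=1=\zeta_1$, we are in the situation $k=\ell=1$ there, and as $u(\zeta_1)\ne0$ with $m_k=m_\ell=1$, that corollary yields that $\phi$ has an ADC at $1$. (Applied with $u\equiv1$, this also gives $(iii)\Rightarrow(ii)$.)

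The substantive step is $(ii)\Rightarrow(iii)$. Taking $u\equiv1$ and unwinding \eqref{eq:defining-w}: since $u(1)=1\ne0$ while $\phi(1)=1\notin\mathbb D$, the index set $\{\zeta_{p+1},\dots,\zeta_n\}$ is empty (so $p=n=1$) and the product in \eqref{eq:defining-w} is void; hence $w=(a\circ\phi)/a=(\phi-1)/(z-1)$. By \Cref{thm10} it then suffices to check that $w\in H^2$ — equivalently that $\phi\in\HH(b)$, which is precisely \ref{H1} and \ref{H2} for the pair $(1,\phi)$ — together with the Carleson-type condition (iii) of \Cref{thm10}. For the first point, observe that, since $\phi(1)=1$, the function $w$ is exactly the boundary reproducing kernel $k_1^{\phi}(z)=\dfrac{1-\overline{\phi(1)}\,\phi(z)}{1-z}$ of the de Branges--Rovnyak space $\HH(\phi)$ associated with $\phi$; the existence of an ADC of $\phi$ at $1$ is equivalent to $k_1^{\phi}\in\HH(\phi)$ (see \cite{Fricain-2015-vol2}), and $\HH(\phi)$ is contractively contained in $H^2$, so indeed $w\in H^2$.

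It remains to deduce the Carleson-type condition \[\sup_{\lambda\in\mathbb D}(1-|\lambda|^2)\int_{\mathbb T}\frac{|w(\zeta)|^2}{|1-\overline\lambda\,\phi(\zeta)|^2}\,dm(\zeta)<+\infty\] from the ADC at $1$, and this is the heart of the matter. I would first localize: outside a small arc $V\ni1$ one has $|w(\zeta)|=|\phi(\zeta)-1|/|\zeta-1|\le C_V$, so the corresponding part of the integral is at most $C_V^2(1-|\lambda|^2)\|C_\phi k_\lambda\|_2^2\le C_V^2\|C_\phi\|_{\mathcal L(H^2)}^2$, finite by Littlewood's subordination principle; and on $\{|\phi|<1-\delta\}$ one uses $|1-\overline\lambda\phi|\ge\delta$ together with $w\in H^2$. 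The genuinely delicate region is where $\zeta$ is close to $1$ \emph{and} $|\phi(\zeta)|$ is close to $1$: there $w$ need not be bounded (Julia's inequality only gives the pointwise bound $(1-|z|^2)|w(z)|^2\le|\phi'(1)|(1-|\phi(z)|^2)$ inside $\mathbb D$), but the set on which $|w(\zeta)|$ is large is exactly the set on which $\phi(\zeta)$ lies far from $1$, where $|1-\overline\lambda\phi(\zeta)|^{-2}$ is tame for $\lambda$ near $1$, and for $\lambda$ near another boundary point the $|w|^2\,dm$-mass sent by $\phi$ near that point is small enough relative to $1-|\lambda|$ to absorb the growth of $|w|$. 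Turning this dichotomy into the required uniform estimate is the kind of computation carried out in \cite{alhajj2022composition} (and it relies on \cite{Gallardo-Gutierrez-2010}); once it is done, \Cref{thm10} yields that $\W=C_\phi$ is bounded on $\HH(b)$, closing the cycle. In short, the whole point is that, for $b(z)=\tfrac{1+z}{2}$ and $\phi(1)=1$, the criterion of \Cref{thm10} taken with $u\equiv1$ collapses to the classical angular-derivative condition.
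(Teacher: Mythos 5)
Your skeleton coincides with the paper's: $(iii)\Rightarrow(i)$ with $u\equiv1$, $(i)\Rightarrow(ii)$ by applying \Cref{coro:woi} with $k=\ell=1$ and $m_k=m_\ell=1$, and for $(ii)\Rightarrow(iii)$ the observation that the ADC at $1$ gives $w=(\phi-1)/(z-1)\in H^2$, i.e.\ $\phi\in\HH(b)$. Those three pieces are fine (your identification of $w$ with the boundary kernel of $\HH(\phi)$ is a correct, if roundabout, way to get $w\in H^2$; the paper just quotes \cite{Fricain-2015-vol2}*{Theorem 21.1}).

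The gap is in closing $(ii)\Rightarrow(iii)$. After establishing $w\in H^2$ you still have to show that the measure $|w|^2\,dm\circ\phi^{-1}$ is Carleson, and your treatment of the only nontrivial region (where $\zeta$ is near $1$ and $|\phi(\zeta)|$ is near $1$) is not an argument: the assertion that ``the $|w|^2\,dm$-mass sent by $\phi$ near a boundary point is small enough relative to $1-|\lambda|$ to absorb the growth of $|w|$'' is precisely the Carleson condition you are trying to prove, restated, with no mechanism supplied for deriving it from the ADC. Moreover the preliminary claim that ``the set on which $|w(\zeta)|$ is large is exactly the set on which $\phi(\zeta)$ lies far from $1$'' is inaccurate: near $\zeta=1$, largeness of $|w(\zeta)|=|\phi(\zeta)-1|/|\zeta-1|$ only means $|\phi(\zeta)-1|\gg|\zeta-1|$, which is compatible with $\phi(\zeta)$ being arbitrarily close to $1$, so the dichotomy you build on it does not localize the difficulty the way you suggest. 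The paper sidesteps this entirely: having shown $\phi=(z-1)g+1\in\HH(b)$, it invokes Corollary~4.9 of \cite{alhajj2022composition}, which states exactly that for $b(z)=\frac{1+z}{2}$ this membership (given $\phi(1)=1$) implies boundedness of $C_\phi$ on $\HH(b)$. If you intend to lean on \cite{alhajj2022composition} anyway, cite that result; if you intend to verify condition $(iii)$ of \Cref{thm10} directly from the ADC, the estimate in the delicate region must actually be carried out (e.g.\ via the Julia--Carath\'eodory inequality combined with a level-set decomposition), and as written it is not.
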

\begin{proof}
The implication $(i)\implies(ii)$ is a consequence of \Cref{coro:woi}.

Assuming $(ii)$, i.e., $\phi$ has an angular derivative in the sense of Carathéodory at $1$. Then by \cite{Fricain-2015-vol2}*{Theorem 21.1}, $g=\frac{\phi-1}{z-1}\in H^2$ and thus $\phi=(z-1)g+1\in \HH(b)$. We can now apply Corollary 4.9 in \cite{alhajj2022composition} to get $(iii)$. The implication $(iii)\implies(i)$ is trivial (take $u\equiv 1$ for instance).
\end{proof}
\section{Admissible weights}\label{section:admissible}
In Subsection~\ref{Subsection:EssentialRange}, we defined the set of admissible weights by
\[
\mathcal M_{\varphi,b}=\{u\in\HH(b):\W\text{ is bounded on }\HH(b)\},
\]
where $\phi:\mathbb D\to \mathbb D$ is analytic. We also noticed in \eqref{eq:ainfini-inclus-set-of-admissible-poids} that we always have 
\begin{equation}\label{eq:sdsd233D00JSD}
    aH^\infty\subset\{u\in\HH(b)\,:\,u\phi\in\HH(b)~\text{and}~w\in H^\infty\}\subset\mathcal M_{\phi,b},
\end{equation}
where $w$ is defined by \eqref{eq:defining-w}.

If $\|b\|_\infty<1$, then its Pythagorean mate $\tilde a$ has no zeroes in $\overline{\mathbb D}$ and so $a\equiv 1$, which implies that $\HH(b)=H^2$. In this case, the first inclusion in \eqref{eq:sdsd233D00JSD} is trivially an equality, and we know when the second inclusion  is strict. Indeed, Contreras and Hern\'{a}ndez-D\'{\i}az proved in \cite{MR1983019} that 
\begin{equation}\label{eq:contreras}
 \mathcal M_\phi=H^\infty \Longleftrightarrow \phi \text{ is a finite Blaschke product},
\end{equation}
where $\mathcal M_\phi=\{v\in H^2:W_{v,\phi}\text{ is bounded on }H^2\}$. In this section, we shall show, in particular, that if $b$ is a rational function in the closed unit ball of $H^\infty$ that is not a finite Blaschke product and such that $\|b\|_\infty=1$, then the inclusion $aH^\infty\subset \mathcal M_{\phi,b}$ is always strict.

\subsection{Admissible weights and finite Blaschke products}
In the next result, we show that the second inclusion in \eqref{eq:sdsd233D00JSD} is an equality as soon as $\phi$ is a finite Blaschke product.  
\begin{theorem}\label{Th:fbpoir3u}
Let $b$ be a rational function in the closed unit ball of $H^\infty$ that is not a finite Blaschke product and such that $\|b\|_\infty=1$. Let $\phi$ be a finite Blaschke product. Then     \[M_{\phi,b}=\left\{u\in\HH(b)\,:\,u\phi\in\HH(b)~\text{and}~w\in H^\infty\right\}.\]
\end{theorem}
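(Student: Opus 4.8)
The inclusion $\supseteq$ is exactly the second inclusion in \eqref{eq:sdsd233D00JSD}, which holds in general by \Cref{thm10}, so only the inclusion $\subseteq$ requires an argument. So let $u\in M_{\phi,b}$, i.e., $u\in\HH(b)$ and $\W$ is bounded on $\HH(b)$. By the remarks preceding the statement of \ref{H1}, \ref{H2}, we already know $u$ and $\phi$ satisfy \ref{H1} and \ref{H2}, so $u\phi\in\HH(b)$ and the weight $w$ defined by \eqref{eq:defining-w} makes sense; by \Cref{thm10} the operator $W_{w,\phi}$ is bounded on $H^2$, so $w\in\mathcal M_\phi$ in the notation of \eqref{eq:contreras}. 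The goal is therefore to upgrade ``$w\in H^2$ and $W_{w,\phi}$ bounded on $H^2$'' to ``$w\in H^\infty$,'' using that $\phi$ is a finite Blaschke product.

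The key point is that when $\phi$ is a finite Blaschke product of degree $d$, the composition operator $C_\phi$ is bounded \emph{below} on $H^2$ (equivalently, bounded and with closed range): indeed $C_\phi$ is then a bounded operator whose adjoint acts on reproducing kernels by $C_\phi^*k_\lambda=k_{\phi(\lambda)}$, and since $\phi$ is a proper map of $\mathbb D$ onto $\mathbb D$ of finite degree one has $\|f\circ\phi\|_2^2=\int_{\mathbb T}|f(\phi)|^2\,dm\ge \|f\|_2^2$ by the change of variables formula (each point of $\mathbb T$ has $d$ preimages counted with the natural weights, summing the pushforward of $m$ to $\ge m$). Concretely, $W_{w,\phi}=M_w C_\phi$ on $H^2$ where $M_w$ is multiplication by $w$; since $C_\phi$ is bounded below, the boundedness of $M_wC_\phi$ forces $M_w$ to be bounded on the (closed) range of $C_\phi$, and since that range contains enough functions (e.g. it contains $C_\phi H^\infty$, hence a dense-in-$H^2$ subspace of functions on which $w$ acts as a bounded multiplier) one concludes $\sup_{\mathbb D}|w|<\infty$, i.e. $w\in H^\infty$.

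A clean way to extract this, avoiding range-of-$C_\phi$ bookkeeping, is via reproducing kernels directly: from boundedness of $W_{w,\phi}^*$ one has, as in the proof of \Cref{coro:woi}, $W_{w,\phi}^*k_\lambda=\overline{w(\lambda)}k_{\phi(\lambda)}$, so $|w(\lambda)|^2\|k_{\phi(\lambda)}\|_2^2\le C^2\|k_\lambda\|_2^2$, that is
\[
|w(\lambda)|^2\,\frac{1-|\lambda|^2}{1-|\phi(\lambda)|^2}\le C^2,\qquad \lambda\in\mathbb D.
\]
Now invoke the Julia–Carathéodory/Schwarz–Pick bound available for a finite Blaschke product: since $\phi$ is a finite Blaschke product, $\phi$ and $\phi'$ extend analytically across $\mathbb T$, $|\phi|=1$ on $\mathbb T$, and $|\phi'|$ is bounded above on $\overline{\mathbb D}$; hence
\[
\frac{1-|\phi(\lambda)|}{1-|\lambda|}\le \|\phi'\|_{\infty,\overline{\mathbb D}}<\infty\qquad\text{for all }\lambda\in\mathbb D,
\]
so $\dfrac{1-|\lambda|^2}{1-|\phi(\lambda)|^2}\ge \dfrac{1}{2\|\phi'\|_{\infty,\overline{\mathbb D}}}$ uniformly in $\lambda$. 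Combining the two displays gives $|w(\lambda)|^2\le 2C^2\|\phi'\|_{\infty,\overline{\mathbb D}}$ for every $\lambda\in\mathbb D$, i.e. $w\in H^\infty$, which is precisely the missing membership. Together with $u\in\HH(b)$ and $u\phi\in\HH(b)$ this shows $u$ lies in the right-hand set, completing the proof.

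**Main obstacle.** The only genuinely delicate point is the uniform lower bound $\inf_{\lambda\in\mathbb D}\dfrac{1-|\lambda|^2}{1-|\phi(\lambda)|^2}>0$; everything else is the kernel identity for $W_{w,\phi}^*$ (already used in \Cref{coro:woi}) and \Cref{thm10}. For that lower bound one should justify carefully that a finite Blaschke product has $\phi'$ bounded on $\overline{\mathbb D}$ and satisfies the two-sided Schwarz–Pick estimate $1-|\phi(\lambda)|\asymp 1-|\lambda|$ uniformly — this follows from $\phi$ being a proper holomorphic self-map of $\mathbb D$ extending smoothly to $\overline{\mathbb D}$ with no critical points on $\mathbb T$, but it is worth citing \cite{MR1397026} or \cite{Fricain-2015-vol2} rather than reproving it. (As an alternative route one can avoid this estimate entirely and instead argue that $C_\phi$ has closed range on $H^2$ when $\phi$ is a finite Blaschke product, so $W_{w,\phi}=M_wC_\phi$ bounded forces $M_w$ bounded on $\operatorname{ran}C_\phi$, hence $w\in H^\infty$; but the kernel-estimate argument above is the most economical.)
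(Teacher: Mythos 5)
Your proof is correct, and its overall skeleton coincides with the paper's: both directions reduce, via \eqref{eq:sdsd233D00JSD} and \Cref{thm10}, to showing that boundedness of $W_{w,\phi}$ on $H^2$ forces $w\in H^\infty$ when $\phi$ is a finite Blaschke product. Where you diverge is in how that last step is handled. The paper simply observes that $w\in\mathcal M_\phi$ and invokes the Contreras--Hern\'andez-D\'{\i}az characterization \eqref{eq:contreras}, which is already quoted in that section; your proposal instead reproves the relevant (easy) implication of \eqref{eq:contreras} from scratch, via the identity $W_{w,\phi}^*k_\lambda=\overline{w(\lambda)}k_{\phi(\lambda)}$ together with the uniform bound $1-|\phi(\lambda)|\le \|\phi'\|_{\infty,\overline{\mathbb D}}\,(1-|\lambda|)$, valid because a finite Blaschke product extends analytically across $\mathbb T$ with $|\phi|=1$ there. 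That kernel estimate is sound (it is the same computation the paper uses in \Cref{coro:woi}), and the resulting argument is self-contained and elementary, at the cost of being longer than the one-line citation. Your opening paragraph about $C_\phi$ being bounded below is only motivational and is superseded by the kernel argument; note in passing that the inequality $\|f\circ\phi\|_2\ge\|f\|_2$ as stated with constant $1$ is not quite what the change-of-variables formula gives in general (one gets a lower bound with a constant depending on $\inf_{\mathbb T}$ of the density of $\phi_*m$), but since that paragraph is not used in the actual proof this does not affect correctness. Also, for tidiness you could remark that when $\phi$ is a finite Blaschke product every $\phi(\zeta_j)$ is unimodular, so $p=n$ and the weight reduces to $w=u\cdot a\circ\phi/a$, as the paper points out.
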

\begin{proof}

Thanks to \eqref{eq:sdsd233D00JSD}, we always have $\{u\in\HH(b)\,:\,u\phi\in\HH(b)~\text{and}~w\in H^\infty\}\subset\mathcal M_{\phi,b}$, and so we just have to check the other inclusion when $\phi$ is a finite Blaschke product.

  Let $u\in\mathcal M_{\phi,b}$, meaning that $u\in\HH(b)$ and $W_{u,\phi}$ is bounded on $\HH(b)$. In particular, \ref{H1} and \ref{H2} are satisfied and according to \Cref{thm10}, the operator $W_{w,\phi}$ is bounded on $H^2$, where $w=u\cdot a\circ\phi/a$ (note that since $\phi$ is a finite Blaschke product, for every $1\leq i\leq n$, $\phi(\zeta_i)\in\mathbb T$). In particular, $w\in\mathcal M_\phi$ and 
\eqref{eq:contreras} implies that we necessarily have $w\in H^\infty$. Thus
\[
\mathcal M_{\phi,b}\subset \left\{u\in\HH(b)\,:\,u\phi\in\HH(b)~\text{and}~w\in H^\infty\right\}.
\]
Combined with \eqref{eq:sdsd233D00JSD}, this gives the equality and concludes the proof.
\end{proof}
We may ask if we have equivalence in \Cref{Th:fbpoir3u}. More precisely:
\begin{question}
Let $\phi:\mathbb D\to \mathbb D$ analytic and suppose that for every rational function $b$ with $\|b\|_\infty=1$ which is not a finite Blaschke product, we have $M_{\phi,b}=\{u\in\HH(b)\,:\,u\phi\in\HH(b)~\text{and}~w\in H^\infty\}$, where $w$ is defined by \eqref{eq:defining-w}. Do we have necessarily that $\phi$ is a finite Blaschke product?
\end{question}
The following result gives a partial answer to this question.
\begin{proposition}
Let $\phi:\mathbb D\to\mathbb D$ analytic and suppose that $\overline{\phi(\mathbb D)}\cap\mathbb T\neq\mathbb T$. Then there exists a rational function $b:\mathbb D\to\mathbb D$ with $\|b\|_\infty=1$ which is not a finite Blaschke product such that
\[\{u\in\HH(b)\,:\,u\phi\in\HH(b)~\text{and}~w\in H^\infty\}\varsubsetneq\mathcal M_{\phi,b}.\]
\end{proposition}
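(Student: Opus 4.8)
Since $\overline{\phi(\mathbb D)}\cap\mathbb T\neq\mathbb T$, we may pick a point $\zeta_1\in\mathbb T\setminus\overline{\phi(\mathbb D)}$ and set $\delta:=\operatorname{dist}(\zeta_1,\overline{\phi(\mathbb D)})>0$. The plan is to choose $b$ so that $\zeta_1$ becomes the unique boundary zero of its Pythagorean mate, and then to exploit the fact that $\phi$ stays uniformly away from $\zeta_1$. Concretely, I would take
\[
b(z)=\frac{1+\overline{\zeta_1}z}{2},\qquad z\in\mathbb D;
\]
this is a rational function in the unit ball of $H^\infty$ with $\|b\|_\infty=1$, it is not a finite Blaschke product (it is not inner, since $|b|<1$ a.e.\ on $\mathbb T$), and a direct computation shows that its Pythagorean mate vanishes on $\mathbb T$ only at $\zeta_1$, with multiplicity one. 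Thus $a(z)=z-\zeta_1$, $N=1$, and $\HH(b)=(z-\zeta_1)H^2\oplus\mathbb C$. The key feature of this choice is that $a\circ\phi=\phi-\zeta_1$ satisfies $\delta\le|a\circ\phi|\le 2$ on $\mathbb D$, so that both $a\circ\phi$ and $1/(a\circ\phi)$ belong to $H^\infty$.

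Next, note that $\phi$ is not a finite Blaschke product: otherwise $\phi(\mathbb D)=\mathbb D$, which would force $\overline{\phi(\mathbb D)}\cap\mathbb T=\mathbb T$. Hence, by the Contreras and Hern\'{a}ndez-D\'{\i}az theorem \eqref{eq:contreras} together with the always-valid inclusion $H^\infty\subset\mathcal M_\phi$ (Littlewood's subordination principle), we may fix $g\in\mathcal M_\phi\setminus H^\infty$; that is, $g\in H^2$, $W_{g,\phi}$ is bounded on $H^2$, but $g\notin H^\infty$. Now put $u:=ag=(z-\zeta_1)g$. Then $u\in aH^2\subset\HH(b)$ and $u\phi=a(g\phi)\in aH^2\subset\HH(b)$, so \ref{H1} and \ref{H2} hold; moreover $u\in\mathcal M(a)$, so by \eqref{eq:description-Ma-avec-derivee} the function $u$ has non-tangential limit $0$ at $\zeta_1$. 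Consequently the index set in \eqref{eq:defining-w} is empty, i.e.\ $p=n=1$ and the polynomial product there is absent, so the associated weight reduces to
\[
w=\frac{u\cdot(a\circ\phi)}{a}=g\cdot(a\circ\phi).
\]
Since $W_{w,\phi}f=(a\circ\phi)\cdot W_{g,\phi}f$ and $a\circ\phi\in H^\infty$, the operator $W_{w,\phi}$ is bounded on $H^2$, so \Cref{thm10} gives that $\W$ is bounded on $\HH(b)$, i.e.\ $u\in\mathcal M_{\phi,b}$. On the other hand $w\notin H^\infty$: were it bounded, then $g=w/(a\circ\phi)$ would be bounded because $1/(a\circ\phi)\in H^\infty$, contradicting the choice of $g$. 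Therefore $u$ belongs to $\mathcal M_{\phi,b}$ but not to $\{u\in\HH(b):u\phi\in\HH(b)\text{ and }w\in H^\infty\}$, and since the reverse inclusion always holds by \eqref{eq:sdsd233D00JSD}, the inclusion is strict.

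The only genuinely delicate point --- and the only place where the hypothesis $\overline{\phi(\mathbb D)}\cap\mathbb T\neq\mathbb T$ is really used --- is the invertibility of $a\circ\phi$ in $H^\infty$: this is exactly what lets us transport the known strictness $\mathcal M_\phi\supsetneq H^\infty$ for $\phi$ acting on $H^2$ into strictness of the analogous inclusion on $\HH(b)$. The rest is routine bookkeeping: confirming the claimed Pythagorean mate of $b$, and checking that for $u\in aH^2$ the polynomial factor in \eqref{eq:defining-w} disappears, so that $w$ simplifies to $g\cdot(a\circ\phi)$.
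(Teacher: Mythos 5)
Your proof is correct and follows essentially the same route as the paper: choose $b$ of degree one so that $a$ vanishes at a point $\zeta_1$ bounded away from $\overline{\phi(\mathbb D)}$, use \eqref{eq:contreras} to produce an unbounded element of $\mathcal M_\phi$, and transfer it to $\mathcal M_{\phi,b}$ via \Cref{thm10}, the whole argument hinging on the invertibility of $a\circ\phi$ in $H^\infty$. The only (immaterial) difference is that the paper sets $u=a\,w/(\phi-\zeta_1)$ so that the associated weight is exactly the chosen $w$, whereas you set $u=ag$ and obtain the weight $g\cdot(a\circ\phi)$, which differs from $g$ by an invertible factor.
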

\begin{proof}
Since $\overline{\phi(\mathbb D)}\cap\mathbb T\neq\mathbb T$, then $\phi$ cannot be a finite Blaschke product and then, by \eqref{eq:contreras}, there exists $w\in H^2\setminus H^\infty$ such that $W_{w,\phi}$ is bounded on $H^2$. 

Moreover, by hypothesis, there exists $\zeta\in\mathbb T$ and $\varepsilon>0$ such that $|\phi(z)-\zeta|>\varepsilon$ for every $z\in\mathbb D$. So let $b(z)=\frac{z+\zeta}{2}$, $z\in\mathbb D$. Then we have $a(z)=z-\zeta$, $z\in\mathbb D$, and let 
\[u(z)=\frac{(z-\zeta)w(z)}{\phi(z)-\zeta},~z\in\mathbb D.\]
Since $|\phi-\zeta|>\varepsilon$, we have that $w/(\phi-\zeta)\in H^2$. So we deduce that  $u$ and $u\phi$ belong to 
$ aH^2\subset\HH(b)$ and $u(\zeta)=0$. This implies that $w$ coincide with the weight defined in \eqref{eq:defining-w}. Since the operator $W_{w,\phi}$ is bounded on $H^2$, by \Cref{thm10}, the operator $\W$ is bounded on $\HH(b)$, which means that $u\in\mathcal M_{\phi,b}$. But since $w\notin H^\infty$, this concludes the proof.
\end{proof}

\subsection{Existence of Admissible weights which do not belong in $aH^\infty$} 
The goal of this subsection is to prove that the inclusion $aH^\infty\subset\mathcal M_{\phi,b}$ is always strict. 

\begin{theorem}\label{Th:InclStrict}
Let $b$ be a rational function in the closed unit ball of $H^\infty$ that is not a finite Blaschke product and such that $\|b\|_\infty=1$.  Then, for every analytic function $\phi:\mathbb D\to\mathbb D$, we have  $aH^\infty\varsubsetneq \mathcal M_{\phi,b}$.
\end{theorem}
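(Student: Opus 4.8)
My plan is to produce, for each analytic $\phi:\mathbb D\to\mathbb D$, a weight $u\in\mathcal M_{\phi,b}$ that does not lie in $aH^\infty$, distinguishing whether or not $\phi$ is a finite Blaschke product. I will use that $a$ is a non-constant polynomial (since $\|b\|_\infty=1$ forces $n\ge1$), that $aH^2=\mathcal M(a)\subset\HH(b)$, that by \eqref{eq:ainfini-inclus-set-of-admissible-poids} any $u$ with $u,u\phi\in\HH(b)$ and $w\in H^\infty$ lies in $\mathcal M_{\phi,b}$, and that, by \eqref{eq:description-Ma-avec-derivee}, if $u\in aH^2$ then $u^{(k)}(\zeta_i)=0$ for all $i$ and $0\le k\le m_i-1$, so that the product range $j=p+1,\dots,n$ in \eqref{eq:defining-w} is empty and $w=u\cdot(a\circ\phi)/a$.

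First, suppose $\phi$ is \emph{not} a finite Blaschke product. By \eqref{eq:contreras}, $\mathcal M_\phi\supsetneq H^\infty$, so fix $g\in\mathcal M_\phi\setminus H^\infty$ and set $u:=ag$. Then $u\in aH^2\subset\HH(b)$ and $u\phi=a(g\phi)\in aH^2\subset\HH(b)$ (as $\phi\in H^\infty$), so \ref{H1}, \ref{H2} hold, and by the preceding remark $w=g\cdot(a\circ\phi)$. Since $W_{w,\phi}f=g\cdot((af)\circ\phi)=W_{g,\phi}(af)$ for $f\in H^2$, we have $W_{w,\phi}=W_{g,\phi}M_a$ on $H^2$, which is bounded because $M_a$ is bounded ($a\in H^\infty$) and $W_{g,\phi}$ is bounded ($g\in\mathcal M_\phi$); by \Cref{thm10}, $\W$ is bounded on $\HH(b)$, i.e.\ $u\in\mathcal M_{\phi,b}$. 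Finally $u=ag\notin aH^\infty$, since $ag=ah$ with $h\in H^\infty$ would give $g=h\in H^\infty$.

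Now suppose $\phi$ \emph{is} a finite Blaschke product. Since $\phi(\mathbb D)\subset\mathbb D$, $\phi$ is non-constant of degree $d\ge1$, hence analytic across $\overline{\mathbb D}$ with $|\phi|=1$, $\phi'\ne0$ on $\mathbb T$, and $\phi(\mathbb T)=\mathbb T$. The set $S:=\{\zeta\in\mathbb T:\phi(\zeta)\in\{\zeta_1,\dots,\zeta_n\}\}$ is finite and nonempty; for $\zeta\in S$ write $\phi(\zeta)=\zeta_{j(\zeta)}$. As $\phi-\zeta_{j(\zeta)}$ has a simple zero at $\zeta$ while the other factors $\phi-\zeta_j$ do not vanish near $\zeta$, the function $a\circ\phi$ is analytic near $\overline{\mathbb D}$, has no zero in $\mathbb D$, and vanishes on $\mathbb T$ exactly on $S$ with order $m_{j(\zeta)}$ at $\zeta$; thus $a\circ\phi=B_SH$ with $B_S(z):=\prod_{\zeta\in S}(z-\zeta)^{m_{j(\zeta)}}$ and $H$ analytic and zero-free on a neighbourhood of $\overline{\mathbb D}$, so $c_0:=\inf_{\overline{\mathbb D}}|H|>0$. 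Fix $\varepsilon>0$ with $\varepsilon\,\#S<\tfrac12$ and put
\[
g(z):=\prod_{\zeta\in S}(1-\overline\zeta z)^{\,m_{j(\zeta)}-\varepsilon},\qquad u:=\frac{a\,g}{a\circ\phi},
\]
the powers taken with the principal branch (licit since $\operatorname{Re}(1-\overline\zeta z)>0$ on $\mathbb D$), so that $g\in H^\infty$. Using $|1-\overline\zeta r\xi|=|r\xi-\zeta|$ for $\xi,\zeta\in\mathbb T$ and $0<r<1$, we get
\[
\Bigl|\tfrac{g(r\xi)}{a(\phi(r\xi))}\Bigr|=\tfrac{1}{|H(r\xi)|}\prod_{\zeta\in S}|r\xi-\zeta|^{-\varepsilon}\le\tfrac1{c_0}\prod_{\zeta\in S}|r\xi-\zeta|^{-\varepsilon},\qquad\xi\in\mathbb T ,
\]
and since $2\varepsilon\,\#S<1$ makes each $\int_{\mathbb T}|r\xi-\zeta|^{-2\varepsilon\#S}\,dm(\xi)$ bounded uniformly in $r$, the right-hand side is bounded in $L^2(\mathbb T)$ uniformly in $r$; hence $g/(a\circ\phi)\in H^2$, so $u=a\cdot\bigl(g/(a\circ\phi)\bigr)\in aH^2\subset\HH(b)$ and likewise $u\phi\in aH^2\subset\HH(b)$. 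By the opening remark, $w=g\in H^\infty$, so $u\in\mathcal M_{\phi,b}$. But for $\zeta_0\in S$, along the radius $z=r\zeta_0$ one has $|u(r\zeta_0)/a(r\zeta_0)|=|g(r\zeta_0)/a(\phi(r\zeta_0))|\asymp(1-r)^{-\varepsilon}\to\infty$ as $r\to1^-$, so $u/a\notin H^\infty$, i.e.\ $u\notin aH^\infty$. In both cases $u\in\mathcal M_{\phi,b}\setminus aH^\infty$, which is the assertion. The delicate step is the membership $g/(a\circ\phi)\in H^2$ in the second case: it hinges on $\phi$ extending analytically across $\mathbb T$ with $\phi'\ne0$, so that $a\circ\phi$ vanishes to exact integer orders on $\mathbb T$ and factors as $B_SH$ with $H$ invertible near $\overline{\mathbb D}$, and on tuning $\varepsilon$ so that the exponents $m_{j(\zeta)}-\varepsilon$ stay strictly below $m_{j(\zeta)}$ (keeping $u/a$ unbounded) while the resulting $(-\varepsilon)$-order singularities of $g/(a\circ\phi)$ remain square-integrable.
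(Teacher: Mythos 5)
Your proof is correct and follows essentially the same route as the paper: you split on whether $\phi$ is a finite Blaschke product, handle the non-Blaschke case by taking $u=ag$ with $g\in\mathcal M_\phi\setminus H^\infty$ via \eqref{eq:contreras} (the paper packages this as the contrapositive, \Cref{Lemme:CondFBP}), and in the Blaschke case build $u=a\cdot h$ with $h\in H^2\setminus H^\infty$ whose blow-up at a preimage of a zero of $a$ is absorbed by the vanishing of $a\circ\phi$, exactly as in \Cref{Coro:cleContr}. The only difference is that your Blaschke-case construction is more elaborate than necessary — the paper gets away with a single preimage $\xi$ of a single zero of $a$ and the weight $a(z)(z-\xi)^{-1/3}$, which makes the $H^2$-membership and the boundedness of $w$ immediate.
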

To prove this result, we will argue by contradiction. First, we will show that if $\mathcal M_{\phi,b}=aH^\infty$, then $\phi$ must necessarily be a finite Blaschke product (see \Cref{Lemme:CondFBP}). Then, as a consequence of \eqref{eq:sdsd233D00JSD}, we will show that if $\phi$ is a finite Blaschke product, then $\mathcal M_{\phi,b}\neq aH^\infty$ (see \Cref{Coro:cleContr}), giving the desired contradiction. 

\begin{lemma}\label{Lemme:CondFBP}
Let $\phi:\mathbb D\to\mathbb D$ be analytic and suppose that there exists a rational function $b:\mathbb D\to\mathbb D$ which is not inner and such that $\mathcal M_{\phi,b}=a H^\infty$. Then $\phi$ is a finite Blaschke product.
\end{lemma}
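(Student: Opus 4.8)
The plan is to prove the contrapositive: assuming $\phi$ is \emph{not} a finite Blaschke product, I would show that for every rational non-inner $b$ the set $\mathcal M_{\phi,b}$ strictly contains $aH^\infty$, which contradicts $\mathcal M_{\phi,b}=aH^\infty$. First, dispose of the degenerate case $\|b\|_\infty<1$: then the Pythagorean mate $\tilde a$ has no zero on $\mathbb T$, so $a\equiv 1$, $\HH(b)=H^2$, $aH^\infty=H^\infty$ and $\mathcal M_{\phi,b}=\mathcal M_\phi$; the equality $\mathcal M_\phi=H^\infty$ forces $\phi$ to be a finite Blaschke product by \eqref{eq:contreras}, a contradiction. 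So we may assume $\|b\|_\infty=1$, i.e. $a$ is a non-constant polynomial whose zeros $\zeta_1,\dots,\zeta_n$ all lie on $\mathbb T$, and we are in the standing setting of the paper.

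Since $\phi$ is not a finite Blaschke product, \eqref{eq:contreras} provides $w_0\in\mathcal M_\phi\setminus H^\infty$; as $W_{w_0,\phi}$ is bounded on $H^2$ we have $w_0=W_{w_0,\phi}\mathbb{1}\in H^2$, hence $w_0\in H^2\setminus H^\infty$. I would then set $u:=a\,w_0$. Then $u\in aH^2\subset\HH(b)$ and, since $\phi\in H^\infty$, also $u\phi=a\,(w_0\phi)\in aH^2\subset\HH(b)$, so \ref{H1} and \ref{H2} hold. Moreover $u\in\mathcal M(a)$, so by \eqref{eq:description-Ma-avec-derivee} the function $u$ and its derivatives up to order $m_i-1$ vanish at each $\zeta_i$. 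Consequently the index set $\{\zeta_{p+1},\dots,\zeta_n\}$ entering the definition \eqref{eq:defining-w} is empty ($p=n$), the correcting product $\prod_{j=p+1}^n(\phi-\phi(\zeta_j))^{m_j}$ is trivial, and the weight associated with $u$ reduces to $w=\dfrac{u\cdot a\circ\phi}{a}=w_0\cdot(a\circ\phi)$.

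Next, note that $a\circ\phi=\prod_{i=1}^n(\phi-\zeta_i)^{m_i}\in H^\infty$, so for every $f\in H^2$,
\[
W_{w,\phi}f=(a\circ\phi)\,W_{w_0,\phi}f,\qquad\text{hence}\qquad \|W_{w,\phi}f\|_2\le\|a\circ\phi\|_\infty\,\|W_{w_0,\phi}f\|_2 ,
\]
which shows that $W_{w,\phi}$ is bounded on $H^2$. By \Cref{thm10}, $\W$ is bounded on $\HH(b)$, i.e. $u\in\mathcal M_{\phi,b}$. On the other hand $u=a\,w_0\notin aH^\infty$, since otherwise $w_0=u/a$ would be bounded. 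Thus $u\in\mathcal M_{\phi,b}\setminus aH^\infty$, contradicting $\mathcal M_{\phi,b}=aH^\infty$, and therefore $\phi$ must be a finite Blaschke product.

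The only point that requires care is the bookkeeping around \eqref{eq:defining-w}: one must verify that choosing $u$ inside $\mathcal M(a)$ indeed makes the set $\{\zeta_{p+1},\dots,\zeta_n\}$ empty, so that the auxiliary weight $w$ collapses to $w_0\cdot(a\circ\phi)$ and the boundedness on $H^2$ transfers from $w_0$ to $w$ merely by multiplication by the bounded function $a\circ\phi$; once this is settled, the rest is routine, and the role of \eqref{eq:contreras} is precisely to supply an unbounded admissible $H^2$-weight on the Hardy side.
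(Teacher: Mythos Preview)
Your proof is correct and is essentially the contrapositive of the paper's argument: the paper shows directly that $\mathcal M_{\phi,b}=aH^\infty$ implies $\mathcal M_\phi\subset H^\infty$ by taking $v\in\mathcal M_\phi$, setting $u=av$, and using \Cref{thm10} to conclude $u\in\mathcal M_{\phi,b}=aH^\infty$, whence $v\in H^\infty$ and \eqref{eq:contreras} applies. The key steps (choosing $u=av\in\mathcal M(a)$ so that the auxiliary weight collapses to $v\cdot(a\circ\phi)$, then transferring boundedness via multiplication by $a\circ\phi\in H^\infty$) are identical; your explicit treatment of the degenerate case $\|b\|_\infty<1$ is a harmless addition, since the paper works under the standing hypothesis $\|b\|_\infty=1$.
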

\begin{proof}
Assume that $\mathcal M_{\phi,b}=aH^\infty$ and let us prove that $\mathcal M_\phi=H^\infty$. By the Littlewood subordination principle, the inclusion $H^\infty\subset \mathcal M_\phi$ is always true. Let's now take $v\in\mathcal M_\phi$. Then $v\in H^2$ and $W_{v,\phi}$ is bounded on $H^2$. Write $u=av$. Since $u\in\mathcal M(a)$, \eqref{eq:description-Ma-avec-derivee} implies that $u^{(k)}(\zeta_i)=0$ for every $1\leq i\leq n,\,0\leq k\leq m_i-1$. In particular, \ref{H1} and \ref{H2} are satisfied and the associated weight $w$ is given by $w=u\cdot a\circ\phi/a=v\cdot a\circ\phi$. Since $W_{v,\phi}$ is bounded on $H^2$ and $a\circ\phi\in H^\infty=\mathfrak M(H^2)$, we obtain that $W_{w,\phi}$ is bounded on $H^2$ as well. Then \Cref{thm10} implies that $W_{u,\phi}$ is bounded on $\HH(b)$. In other words, $u\in\mathcal M_{\phi,b}=aH^\infty$. Hence $av\in aH^\infty$, meaning that $v\in H^\infty$. We thus have proved that $\mathcal M_\phi=H^\infty$, and it follows from \eqref{eq:contreras} that $\phi$ is a finite Blaschke product. 
\end{proof}
\begin{lemma}\label{Coro:cleContr}
    Let $\phi$ be a finite Blaschke product. Then, for every rational function $b$ in the closed unit ball of $H^\infty$ that is not a finite Blaschke product and such that $\|b\|_\infty=1$, we have $aH^\infty\varsubsetneq \mathcal M_{\phi,b}$.
\end{lemma}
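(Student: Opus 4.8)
The plan is to use the second inclusion in \eqref{eq:sdsd233D00JSD}, namely $\{u\in\HH(b):u\varphi\in\HH(b)\text{ and }w\in H^\infty\}\subset\mathcal M_{\varphi,b}$: it is then enough to produce a single weight $u\in\HH(b)$ with $u\varphi\in\HH(b)$, $w\in H^\infty$ and $u\notin aH^\infty$. Since $\varphi$ is a finite Blaschke product, all its boundary values $\varphi(\zeta_i)$ are unimodular, so the index $p$ appearing in the normalization before \eqref{eq:defining-w} equals $n$ and the defining formula for $w$ reduces to $w=u\cdot(a\circ\varphi)/a$. Looking therefore for $u$ of the form $u=ag$ with $g\in H^2$, we get for free $u=ag\in aH^2\subset\HH(b)$ and $u\varphi=a(g\varphi)\in aH^2\subset\HH(b)$ (using \eqref{eq:decomposition-Hb}), while $w=g\cdot(a\circ\varphi)$, and $u\notin aH^\infty$ as soon as $g\notin H^\infty$ (if $ag=ah$ with $h\in H^\infty$, then $g=h$ because $a$ is a nonzero polynomial). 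Thus everything reduces to exhibiting $g\in H^2\setminus H^\infty$ such that $g\cdot(a\circ\varphi)\in H^\infty$.

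To build such a $g$, I would first use that a finite Blaschke product maps $\mathbb T$ onto $\mathbb T$: there is $\zeta_0\in\mathbb T$ with $\varphi(\zeta_0)=\zeta_1$, hence $a(\varphi(\zeta_0))=a(\zeta_1)=0$, so $a\circ\varphi$ vanishes at $\zeta_0$. Moreover $\varphi$ — and so $a\circ\varphi$ — is analytic in a neighbourhood of $\overline{\mathbb D}$, so $\dfrac{a\circ\varphi}{1-\overline{\zeta_0}z}$ is analytic and bounded on $\mathbb D$, the simple zero of $1-\overline{\zeta_0}z$ at $\zeta_0$ being absorbed by the vanishing of $a\circ\varphi$ there. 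Now take $g(z)=(1-\overline{\zeta_0}z)^{-s}$ with $s\in(0,1/2)$ fixed: this is the classical example of a function in $H^2\setminus H^\infty$, its boundary modulus being $|z-\zeta_0|^{-s}$, which is square integrable but essentially unbounded on $\mathbb T$. Then
\[
g\cdot(a\circ\varphi)=\frac{a\circ\varphi}{1-\overline{\zeta_0}z}\cdot(1-\overline{\zeta_0}z)^{1-s},
\]
which is a product of a function in $H^\infty$ and a function bounded on $\mathbb D$ (since $1-s>0$ and $|1-\overline{\zeta_0}z|\le2$), hence lies in $H^\infty$.

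Taking $u=ag$ then gives $u\in\HH(b)\setminus aH^\infty$, $u\varphi\in\HH(b)$ and $w=g\cdot(a\circ\varphi)\in H^\infty$, so $u\in\mathcal M_{\varphi,b}$ by \eqref{eq:sdsd233D00JSD}, which proves $aH^\infty\varsubsetneq\mathcal M_{\varphi,b}$. The delicate point is the verification that $g\cdot(a\circ\varphi)\in H^\infty$, i.e.\ that the fractional singularity of $g$ at $\zeta_0$ is exactly cancelled by the (at least simple, integer-order) vanishing of $a\circ\varphi$ at $\zeta_0$: an integer-order choice of $g$ would be useless — matching the vanishing order makes $g$ bounded, a lower order throws $g$ out of $H^2$ — so a fractional power is essential, and so is the surjectivity of finite Blaschke products onto $\mathbb T$, which is precisely what guarantees the existence of a boundary zero of $a\circ\varphi$ to exploit (this is where the hypothesis that $\varphi$ is a finite Blaschke product, rather than merely inner, is used).
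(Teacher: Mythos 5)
Your proposal is correct and follows essentially the same route as the paper: both use the surjectivity of the finite Blaschke product onto $\mathbb T$ to find a boundary point $\zeta_0$ with $\phi(\zeta_0)=\zeta_1\in Z(a)$, take $u=a\cdot g$ with $g$ a fractional-power singularity at $\zeta_0$ lying in $H^2\setminus H^\infty$ (the paper uses exponent $-1/3$ where you use $-s$ with $s\in(0,1/2)$), and check that the vanishing of $a\circ\phi$ at $\zeta_0$ absorbs the singularity so that $w\in H^\infty$, whence $u\in\mathcal M_{\phi,b}\setminus aH^\infty$ by \eqref{eq:sdsd233D00JSD}. No gaps.
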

\begin{proof}
Let $b$ be a rational function in the closed unit ball of $H^\infty$ that is not a finite Blaschke product and such that $\|b\|_\infty=1$. Since $\|b\|_\infty=1$, we know that the associated Pythagorean mate $\widetilde{a}$ has at least one zero $\zeta_1\in\mathbb T$. Since $\phi$ is a finite Blaschke product, it maps $\mathbb T$ onto $\mathbb T$, and thus there exists $\zeta\in\mathbb T$ such that $\phi(\zeta)=\zeta_1$. Let $f(z)=(z-\xi)^{-1/3}$, $z\in\mathbb D$, and let $u=af$. Observe that $f\in H^2\setminus H^\infty$. In particular, $u\in aH^2\subset\HH(b)$. Moreover, since $a\circ \phi$ is a rational and analytic function on a neighborhood o $\overline{\mathbb D}$ which vanishes at $\zeta$, there exists a rational function $r$ without poles in $\overline{\mathbb D}$ such that $a\circ\phi(z)=(z-\xi)r(z)$, $z\in\mathbb D$. Therefore
\[
u(z) (a\circ\phi)(z)=a(z)f(z)(z-\zeta)r(z)=a(z)(z-\zeta)^{2/3}r(z),
\]
and then $w\in H^\infty$. According to \eqref{eq:sdsd233D00JSD}, we thus conclude that $u\in\mathcal M_{\phi,b}$ and $u\notin aH^\infty$, which proves that 
$aH^\infty\varsubsetneq \mathcal M_{\phi,b}$.
\end{proof}
\Cref{Th:InclStrict} now follows immediately from \Cref{Lemme:CondFBP} and \Cref{Coro:cleContr}.

\section{Compact composition operators}\label{section:remarks}
Recall that $B$ denotes the finite Blaschke product defined by 
\[
B(z)=\prod_{j=p+1}^n\left(
     \frac{z-\lambda_j}{1-\overline{\lambda_j}z}\right)^{m_j},
\]     
where $\lambda_j=\phi(\zeta_j)\in\mathbb{D}$ for every $\ p+1\leq j\leq n$. 
Then, using \eqref{eq:decomposition-Hb}, the space $\HH(b)$ can be decomposed (via a direct sum) as
  \[\HH(b)= aH^2\oplus\P_{N-1}= a BH^2\oplus a K_B\oplus\P_{N-1},\]
and the subspace $ a K_B\oplus\P_{N-1}$ is of finite dimension. Recall also that \eqref{eq:89p} gives $W_{u,\varphi}(f)=aW_{\widetilde w,\varphi}(g)$ for every $f=aBg$ with $g\in H^2$ and where $\widetilde w$ is the weight defined by $\widetilde w=u\cdot (a\circ \varphi\cdot B\circ \varphi)/a$. 

Now, note that the polynomial $q(z)=\prod_{j=p+1}^n(1-\overline{\lambda_j}z)^{m_j}$ does not vanish on $\overline{\mathbb D}$, and thus the Toeplitz operator $T_q$ is invertible on $H^2$. Moreover, we have 
  \begin{equation}\label{eq=Wtilde-W-equivalent}
  W_{\tilde w,\phi}T_q=W_{w,\phi}.
  \end{equation}
In particular, $W_{w,\phi}$ is bounded (resp. compact/Hilbert--Schmidt) on $H^2$ if and only if $W_{\widetilde w,\phi}$ is. So, thanks to \eqref{eq:89p}, we get the following characterization of the compactness. 
\begin{theorem}
Let $u$ and $\phi$ satisfy \ref{H1} and \ref{H2}. Let $w$ defined by \eqref{eq:defining-w}. Then the following are equivalent:
      \begin{enumerate}[(i)]
          \item $W_{u,\phi}$ is compact on $\HH(b)$;
          \item $W_{w,\phi}$ is compact on $H^2$;
          \item We have  \[\lim_{|\lambda|\to1} \int_{\mathbb{T}}(1-|\lambda|^2) \frac{|w(\zeta)|^2}{|1-\overline{\lambda}\phi(\zeta)|^2}dm(\zeta)=0.\]
      \end{enumerate}
\end{theorem}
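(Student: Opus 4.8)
The plan is to mirror the structure of the proof of \Cref{thm10}, replacing ``bounded'' by ``compact'' throughout. The equivalence $(ii)\iff(iii)$ is simply the compactness criterion for weighted composition operators on $H^2$ from \cite{CONTRERAS2001224} (see also \cite{Gallardo-Gutierrez-2010}), so the only real work is $(i)\iff(ii)$. For that, I would exploit the decomposition $\HH(b)=aBH^2\oplus aK_B\oplus\P_{N-1}$ together with the identity \eqref{eq:89p}, namely $W_{u,\phi}(aBg)=aW_{\widetilde w,\phi}(g)$, and the relation \eqref{eq=Wtilde-W-equivalent}, $W_{\widetilde w,\phi}T_q=W_{w,\phi}$ with $T_q$ invertible on $H^2$; the latter already tells us that $W_{w,\phi}$ is compact on $H^2$ iff $W_{\widetilde w,\phi}$ is.

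For $(i)\implies(ii)$: assume $\W$ is compact on $\HH(b)$. Since $aBH^2$ is a closed subspace of $\HH(b)$ and, by \eqref{eq:89p} together with the norm \eqref{eq:norm-Hb-decomposition}, the map $g\mapsto aBg$ is (up to the bounded invertible factor coming from $B$ being inner and the polynomial $q$) an isometric embedding of $H^2$ into $\HH(b)$ whose image is $aBH^2$, and the map $af\mapsto f$ identifies $aH^2$ isometrically with $H^2$, the operator $W_{\widetilde w,\phi}$ on $H^2$ is unitarily equivalent to the compression $P_{aH^2}\W|_{aBH^2}$ composed with these identifications. A restriction of a compact operator to a closed subspace followed by an orthogonal projection is compact, so $W_{\widetilde w,\phi}$ is compact on $H^2$, hence so is $W_{w,\phi}$ by \eqref{eq=Wtilde-W-equivalent}. (One first needs $w\in H^2$, which is exactly \Cref{lemma9}, valid since compactness implies boundedness.)

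For $(ii)\implies(i)$: assume $W_{w,\phi}$, equivalently $W_{\widetilde w,\phi}$, is compact on $H^2$. Decompose $\HH(b)=aBH^2\oplus(aK_B\oplus\P_{N-1})$; the second summand is finite-dimensional, so $\W$ restricted to it has finite rank, in particular is compact (here we use \Cref{thm5} and \Cref{cor:7}, exactly as in the proof of \Cref{thm10}, to know that $\W$ maps this finite-dimensional subspace into $\HH(b)$ at all). On $aBH^2$, \eqref{eq:89p} gives $\W(aBg)=aW_{\widetilde w,\phi}(g)$; since $g\mapsto aBg$ is bounded from $H^2$ onto $aBH^2\subset\HH(b)$, $W_{\widetilde w,\phi}$ is compact on $H^2$, and $h\mapsto ah$ is bounded from $H^2$ into $\HH(b)$, the composition $\W|_{aBH^2}$ is compact. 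A bounded operator that is compact on each of two complementary closed subspaces spanning the whole space is compact, so $\W$ is compact on $\HH(b)$.

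The main obstacle I anticipate is bookkeeping rather than any deep new idea: one must be careful that the identifications between $H^2$, $aH^2$, $aBH^2$ and the relevant subspaces of $\HH(b)$ are genuinely bounded with bounded inverses (on their images) under the norm \eqref{eq:norm-Hb-decomposition}, so that compactness transfers in both directions; the inner factor $B$ and the non-vanishing polynomial $q$ are what make this work, via \eqref{eq=Wtilde-W-equivalent}. One should also note explicitly that $(i)$ already forces \ref{H1}, \ref{H2} and, through \Cref{lemma9}, $w\in H^2$, so that statement $(ii)$ even makes sense and all the structural lemmas of \Cref{section:multiplicative} apply. No step should require a genuinely new estimate beyond those already established.
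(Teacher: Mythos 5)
Your argument is correct and is essentially the paper's own proof, just written out in more detail: the paper also reduces $(i)\iff(ii)$ to the identity \eqref{eq:89p} together with the finite-dimensionality of $aK_B\oplus\P_{N-1}$ (and \eqref{eq=Wtilde-W-equivalent} to pass between $\widetilde w$ and $w$), notes via \Cref{thm10} that each assertion forces boundedness so that $w\in H^2$ and the structural lemmas apply, and quotes \cite{Gallardo-Gutierrez-2010} for $(ii)\iff(iii)$.
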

\begin{proof}
First note that by \Cref{thm10}, each of the assertions $(i)$, $(ii)$, and $(iii)$ implies the boundedness of $W_{u,\phi}$ on $\HH(b)$. Now, the equivalence $(i)\iff(ii)$ is given by \eqref{eq:89p} and the fact that $\dim(aK_B\oplus\P_{N-1})$ is finite. The equivalence $(ii)\iff(iii)$ follows from \cite{Gallardo-Gutierrez-2010}.
\end{proof}
The characterization to be Hilbert--Schmidt is similar.
\begin{theorem}\label{theoHS}
     Let $u$ and $\phi$ satisfy \ref{H1} and \ref{H2}. Let $w$ defined by \eqref{eq:defining-w}.  Then the following are equivalent:
      \begin{enumerate}[(i)]
          \item $W_{u,\phi}$ is Hilbert–Schmidt on $\HH(b)$;
          \item $W_{w,\phi}$ is Hilbert–Schmidt on $H^2$;
          \item We have \[ \int_{\mathbb{T}} \frac{|w(\zeta)|^2}{1-|\phi(\zeta)|^2}dm(\zeta)<\infty.\]
      \end{enumerate}
\end{theorem}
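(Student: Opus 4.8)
The plan is to mimic the proof of the compactness characterization just above, reducing the Hilbert--Schmidt property of $W_{u,\phi}$ on $\HH(b)$ to that of $W_{w,\phi}$ on $H^2$ by means of the direct-sum splitting $\HH(b)=aBH^2\oplus aK_B\oplus\P_{N-1}$ and the identity \eqref{eq:89p}. I would begin by noting that, since a Hilbert--Schmidt operator is in particular bounded, and by \Cref{thm10}, each of the assertions $(i)$, $(ii)$, $(iii)$ already forces $W_{u,\phi}$ to be bounded on $\HH(b)$ and $W_{w,\phi}$ to be bounded on $H^2$; so throughout we may freely assume this.

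For the equivalence $(ii)\Longleftrightarrow(iii)$ I would use the classical computation on $H^2$: evaluating the Hilbert--Schmidt norm of $W_{w,\phi}$ on the orthonormal basis $(z^n)_{n\ge0}$ and applying Tonelli's theorem,
\[
\|W_{w,\phi}\|_{\mathrm{HS}}^2=\sum_{n\ge0}\|w\phi^n\|_2^2=\int_{\mathbb T}|w(\zeta)|^2\sum_{n\ge0}|\phi(\zeta)|^{2n}\,dm(\zeta)=\int_{\mathbb T}\frac{|w(\zeta)|^2}{1-|\phi(\zeta)|^2}\,dm(\zeta),
\]
with the usual convention that the integrand is $+\infty$ where $|\phi|=1$; this is exactly the Hilbert--Schmidt criterion recorded in \cite{Gallardo-Gutierrez-2010}.

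For $(i)\Longleftrightarrow(ii)$ I would first dispose of the passage from $w$ to $\widetilde w$: by \eqref{eq=Wtilde-W-equivalent} we have $W_{w,\phi}=W_{\widetilde w,\phi}T_q$ with $T_q$ invertible on $H^2$, and since the Hilbert--Schmidt operators form a two-sided ideal in $\mathcal L(H^2)$, $W_{w,\phi}$ is Hilbert--Schmidt if and only if $W_{\widetilde w,\phi}$ is. It then remains to compare $W_{u,\phi}$ on $\HH(b)$ with $W_{\widetilde w,\phi}$ on $H^2$. Here the key point is that, with respect to the equivalent norm \eqref{eq:norm-Hb-decomposition}, the decomposition $\HH(b)=aBH^2\oplus aK_B\oplus\P_{N-1}$ is \emph{orthogonal}: indeed $\|ag+p\|_b^2=\|g\|_2^2+\|p\|_2^2$ and $BH^2\perp K_B$ in $H^2$. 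Consequently $(aBz^n)_{n\ge0}$ is an orthonormal system spanning $aBH^2$, and completing it by an orthonormal basis $g_1,\dots,g_d$ of the finite-dimensional space $aK_B\oplus\P_{N-1}$ gives an orthonormal basis of $(\HH(b),\|\cdot\|_b)$. Hence
\[
\|W_{u,\phi}\|_{\mathrm{HS}}^2=\sum_{n\ge0}\|W_{u,\phi}(aBz^n)\|_b^2+\sum_{j=1}^d\|W_{u,\phi}g_j\|_b^2,
\]
where the last sum is finite because $W_{u,\phi}$ is bounded. By \eqref{eq:89p}, $W_{u,\phi}(aBz^n)=aW_{\widetilde w,\phi}(z^n)$, and since $\|ah\|_b=\|h\|_2$ for every $h\in H^2$, the first sum equals $\sum_{n\ge0}\|W_{\widetilde w,\phi}(z^n)\|_2^2=\|W_{\widetilde w,\phi}\|_{\mathrm{HS}}^2$. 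Therefore $W_{u,\phi}$ is Hilbert--Schmidt on $\HH(b)$ precisely when $W_{\widetilde w,\phi}$ is Hilbert--Schmidt on $H^2$, precisely when $W_{w,\phi}$ is, which is $(ii)$.

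All the individual steps are routine and I do not expect a genuine obstacle. The one point that needs care is that the splitting of $\HH(b)$ has to be taken orthogonal for the specific norm \eqref{eq:norm-Hb-decomposition} (not the intrinsic $\HH(b)$-norm), which is what makes $(aBz^n)_n$ orthonormal and lets the Hilbert--Schmidt sum decouple. Since the statement only concerns membership in the Hilbert--Schmidt class, and this class is unchanged under replacing a Hilbertian norm by an equivalent one, there is no loss in computing with \eqref{eq:norm-Hb-decomposition}.
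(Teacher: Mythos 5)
Your proof is correct and follows essentially the same route as the paper: the equivalence $(ii)\Leftrightarrow(iii)$ via the basis computation $\sum_n\|w\phi^n\|_2^2=\int_{\mathbb T}|w|^2(1-|\phi|^2)^{-1}dm$, and $(i)\Leftrightarrow(ii)$ via \eqref{eq:89p}, \eqref{eq=Wtilde-W-equivalent} and the finite dimensionality of $aK_B\oplus\P_{N-1}$. You merely spell out in more detail the points the paper leaves implicit (orthogonality of the splitting for the norm \eqref{eq:norm-Hb-decomposition}, the ideal property of the Hilbert--Schmidt class, and its invariance under equivalent Hilbertian norms), all of which are accurate.
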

\begin{proof}
First note that by \Cref{thm10}, each of the assertions $(i)$, $(ii)$, and $(iii)$ implies the boundedness of $W_{u,\phi}$ on $\HH(b)$. Now, the equivalence $(i)\iff(ii)$ is given by \eqref{eq:89p} and the fact that $\dim(aK_B\oplus\P_{N-1})$ is finite. The equivalence $(ii)\iff(iii)$ follows   from the following computation:
\begin{align*}
\sum_{n\ge0}\|W_{w,\phi}(z^n)\|_2^2
&=\sum_{n\ge0}\|w\phi^n\|_2^2\\
&=\sum_{n\ge0}\int_{\mathbb T}|w(\zeta)|^2|\phi(\zeta)|^{2n}\,dm(\zeta)\\
&=\int_{\mathbb T}\frac{|w(\zeta)|^2}{1-|\phi(\zeta)|^2}dm(\zeta).
\end{align*}
But $W_{w,\phi}$ is Hilbert-Schmidt on $H^2$ if and only if $\sum_{n\ge0}\|W_{w,\phi}(z^n)\|_2^2<\infty $, and thus if and only if $\int_{\mathbb{T}} \frac{|w(\zeta)|^2}{1-|\phi(\zeta)|^2}dm(\zeta)<\infty$, which gives the equivalence $(ii)\iff(iii)$.
\end{proof} 
\begin{corollary}
Let $\phi\in H^\infty$ and suppose that $\|\phi\|_\infty<1$. Then, for all $u\in\HH(b)$ such that $u\phi\in \HH(b)$, the operator $W_{u,\phi}$ is Hilbert-Schmidt on $\HH(b)$.
\end{corollary}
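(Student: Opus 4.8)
The plan is to reduce everything to the Hilbert--Schmidt criterion \Cref{theoHS}(iii). Since $u$ and $\phi$ satisfy \ref{H1} and \ref{H2} by hypothesis, the weight $w$ of \eqref{eq:defining-w} is well defined and \Cref{theoHS} applies, so it suffices to check that
\[
\int_{\mathbb T}\frac{|w(\zeta)|^2}{1-|\phi(\zeta)|^2}\,dm(\zeta)<\infty .
\]

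The first step is to show that $w\in H^2$. I would deduce this from \Cref{rem:weigth-dansHardy}, whose hypotheses are \ref{H1}, \ref{H2} (available) together with the requirement that $\varphi(\zeta_i)\in\mathbb D$ for every $1\leq i\leq n$ for which $u^{(\ell)}(\zeta_i)\neq 0$ for some $0\leq\ell\leq m_i-1$. For such an index $i$, \Cref{Lemma:phi-in-M-ai-li} provides a non-tangential limit of $\varphi$ at $\zeta_i$, and $\|\varphi\|_\infty<1$ forces $|\varphi(\zeta_i)|\leq\|\varphi\|_\infty<1$, that is, $\varphi(\zeta_i)\in\mathbb D$. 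Hence \Cref{rem:weigth-dansHardy} applies and $w\in H^2$.

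The second step is the elementary estimate coming from $\|\phi\|_\infty<1$: since $|\phi(\zeta)|\leq\|\phi\|_\infty$ for almost every $\zeta\in\mathbb T$, one has $1-|\phi(\zeta)|^2\geq 1-\|\phi\|_\infty^2>0$ a.e., and therefore
\[
\int_{\mathbb T}\frac{|w(\zeta)|^2}{1-|\phi(\zeta)|^2}\,dm(\zeta)\leq\frac{\|w\|_2^2}{1-\|\phi\|_\infty^2}<\infty .
\]
By the implication (iii)$\Rightarrow$(i) of \Cref{theoHS}, this yields that $W_{u,\phi}$ is Hilbert--Schmidt on $\HH(b)$.

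There is no real obstacle here; the only delicate point is the membership $w\in H^2$, which is not automatic because the denominator $a$ in \eqref{eq:defining-w} has zeros on $\mathbb T$, so one needs the numerator $u\cdot a\circ\phi\cdot\prod_{j=p+1}^n(\phi-\phi(\zeta_j))^{m_j}$ to vanish to order $m_i$ at each $\zeta_i$ simultaneously. This is precisely the information packaged in \Cref{rem:weigth-dansHardy} (via the case analysis in the proof of \Cref{lemma9}), and the hypothesis $\|\phi\|_\infty<1$ is exactly what excludes the delicate situation where $\varphi$ takes a unimodular value at some $\zeta_i$ at which $u$ does not vanish to the required order.
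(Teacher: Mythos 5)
Your proof is correct and follows essentially the same route as the paper's: both deduce $w\in H^2$ from \Cref{rem:weigth-dansHardy} and then bound the integral in \Cref{theoHS}(iii) using $1-|\phi(\zeta)|^2\geq 1-\|\phi\|_\infty^2>0$. Your extra step verifying via \Cref{Lemma:phi-in-M-ai-li} that $\varphi(\zeta_i)\in\mathbb D$ at the relevant points is a welcome detail the paper leaves implicit, but it does not change the argument.
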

\begin{proof}
Let $\delta=\|\phi\|_\infty<1$.  Clearly conditions \ref{H1}, \ref{H2} are satisfied.
Let now $w$ be defined by \eqref{eq:defining-w}. By \Cref{rem:weigth-dansHardy}, we deduce that $w\in H^2$. Then 
\[\int_{\mathbb{T}} \frac{|w(\zeta)|^2}{1-|\phi(\zeta)|^2}dm(\zeta)\le \frac{\|w\|_2^2}{(1-\delta)^2}<\infty.\]
Thus \Cref{theoHS} implies that $\W$ is Hilbert-Schmidt on $\HH(b)$. 
\end{proof}

\noindent Acknowledgment: This work was supported in part by the project COMOP of the French National Research Agency (grant ANR-24-CE40-0892-01). The authors acknowledge the support of the CDP C$^2$EMPI, as well as of the French State under the France-2030 program, the University of Lille, the Initiative of Excellence of the University of Lille, and the European Metropolis of Lille for their funding and support of the R-CDP-24-004-C2EMPI project. The second author was supported by the Palestinian Quebecer Science Bridge (PQSB), which promotes scientific collaboration in research between Quebec, Canada, and Palestine through the Palestine Academy for Science and Technology and the Fonds de Recherche du Quebec (FRQ), Canada and its three branches; the Fonds de recherche du Quebec - Sante (FRQS), the Fonds de recherche du Quebec - Nature et technologies (FRQNT), and the Fonds de recherche du Quebec - Societe et culture (FRQSC). The third author was supported by the Canada Research Chairs program and the Discovery grant of NSERC (Canada). The fourth author also acknowledges the support of the CNRS.

\bibliographystyle{plain}
 \bibliography{ref}
\end{document}